\DeclareMathOperator{\supp}{supp}
\DeclareMathOperator{\diam}{diam}
\DeclareMathOperator{\meas}{meas}
\DeclareMathOperator{\Span}{span}
\DeclareMathOperator*{\einf}{ess\,inf}
\DeclareMathOperator{\level}{level}
\providecommand{\abs}[1]{\lvert#1\rvert}
\providecommand{\bigabs}[1]{\bigl\lvert#1\bigr\rvert}
\providecommand{\Bigabs}[1]{\Bigl\lvert#1\Bigr\rvert}
\providecommand{\norm}[1]{\lVert#1\rVert}
\providecommand{\bignorm}[1]{\bigl\lVert#1\bigr\rVert}
\providecommand{\Bignorm}[1]{\Bigl\lVert#1\Bigr\rVert}
\providecommand{\biggnorm}[1]{\biggl\lVert#1\biggr\rVert}
\providecommand{\ceil}[1]{\lceil#1\rceil}
\newtheorem{theorem}{Theorem}
\newtheorem{lemma}[theorem]{Lemma}
\newtheorem{prop}[theorem]{Proposition}
\theoremstyle{definition}
\theoremstyle{remark}
\newtheorem{remark}[theorem]{Remark}
\numberwithin{equation}{section}
\numberwithin{theorem}{section}
\theoremstyle{plain}
\newtheorem{assumption}{Assumption}
\newcommand{\cA}{{\mathcal{A}}}
\newcommand{\cM}{{\mathcal{M}}}
\newcommand{\cF}{{\mathcal{F}}}
\newcommand{\cV}{\mathcal{V}}
\newcommand{\cT}{\mathcal{T}}
\newcommand{\bA}{\mathbf{A}}
\newcommand{\bP}{\mathbf{P}}
\newcommand{\bM}{\mathbf{M}}
\newcommand{\bu}{\mathbf{u}}
\newcommand{\bv}{\mathbf{v}}
\newcommand{\bg}{\mathbf{g}}
\newcommand{\bh}{\mathbf{h}}
\newcommand{\bz}{\mathbf{z}}
\newcommand{\bw}{\mathbf{w}}
\newcommand{\bbf}{\mathbf{f}}
\newcommand{\bB}{\mathbf{B}}
\newcommand{\br}{\mathbf{r}}
\newcommand{\bq}{\mathbf{q}}
\newcommand{\sdd}{\,\mathrm{d}}
\newcommand{\PP}{\mathbb{P}}
\newcommand{\Chi}{\raise .3ex
\hbox{\large $\chi$}} 
\newcommand{\T}{{\mathbb{T}}}
\newcommand{\R}{\mathbb{R}}
\newcommand{\N}{\mathbb{N}}
\newcommand{\msx}[1]{\text{\textsc{Mesh}}(#1)}
\newcommand{\initmesh}{\hat\cT_0}
\title[A convergent adaptive finite element stochastic Galerkin method]{A convergent adaptive finite element stochastic Galerkin method based on multilevel expansions of random fields}
\author{Markus Bachmayr$^1$} 
\email{bachmayr@igpm.rwth-aachen.de}
\author{Martin Eigel$^2$}
\email{martin.eigel@wias-berlin.de}
\author{Henrik Eisenmann$^1$}
\email{eisenmann@igpm.rwth-aachen.de}
\author{Igor Voulis$^3$}
\email{i.voulis@math.uni-goettingen.de}
\address{$^1$ Institut f\"ur Geometrie und Praktische Mathematik, RWTH Aachen University, Templergraben 55, 52062 Aachen, Germany}
\address{$^2$ Weierstrass Institute for Applied Analysis and Stochastics, Mohrenstr.\ 39, 10117 Berlin, Germany}
\address{$^3$ Institute for Numerical and Applied Mathematics, University of G\"ottingen, Lotzestr.\ 16-18, 37083 G\"ottingen, Germany}
\date{\today}
\thanks{M.B.\ acknowledges funding by Deutsche Forschungsgemeinschaft (DFG, German Research Foundation) -- project numbers 442047500, 501389786. M.E.\ acknowledges funding by the DFG SPP 2298 and ANR-DFG project ``COFNET''. The work of H.E.~was funded by Deutsche Forschungs\-gemeinschaft – project number 501389786. I.V.\ acknowledges funding by the DFG through grant 432680300 -- SFB 1456.}
\date{\today}
\begin{document}

\maketitle

\begin{abstract}
  The subject of this work is an adaptive stochastic Galerkin finite element method for parametric or random elliptic partial differential equations, which generates sparse product polynomial expansions with respect to the parametric variables of solutions. For the corresponding spatial approximations, an independently refined finite element mesh is used for each polynomial coefficient. The method relies on multilevel expansions of input random fields and achieves error reduction with uniform rate. In particular, the saturation property for the refinement process is ensured by the algorithm. The results are illustrated by numerical experiments, including cases with random fields of low regularity.

  \smallskip
  \noindent \emph{Keywords.}  stochastic Galerkin method, finite elements, frame-based error estimation, multilevel expansions of random fields
\smallskip

\noindent \emph{Mathematics Subject Classification.} {35J25, 35R60, 41A10, 41A63, 42C10, 65N50, 65N30}

\end{abstract}

\section{Introduction}

Elliptic partial differential equations with coefficients depending on countably many parameters arise in particular in problems of uncertainty quantification. In this context, they result from expansions of random fields on the computational domain as function series with scalar random coefficients corresponding to the parametric variables. The method constructed and analyzed here yields approximations of the parameter-dependent solutions using adaptive finite elements in the spatial variables combined with a sparse polynomial expansion in the parametric variables.

\subsection{Motivation: highly sparse representations and convergence of solvers}
For a class of ellliptic diffusion problems on a domain $D$, we aim to approximate the mapping from diffusion coefficients $a$ to the corresponding solutions $u$. Here, the first step is the choice of an expansion of $a$ as a function series $a(y) = \theta_i + \sum_{i \in \N} y_i \theta_i$ with $\theta_i \in L_\infty(D)$, $i \in \N_0$, parameterized by a countable family of scalar coefficients $y = (y_i)_{i \in \N}$. Since the function-valued mapping $y \mapsto u(y)$ is smooth with respect to these parameters, it is natural to approximate it in terms of suitable product polynomials $P_\nu(y) = \prod_i P_{\nu_i} (y_i)$ with $\deg P_{\nu_i} = \nu_i$ in the form $u(y) = \sum_{\nu \in F} u_\nu P_\nu(y)$, where $F$ is a subset of the finitely supported multi-indices in $\N_0^\N$. Under appropriate assumptions on the decay of $\norm{\theta_i}_{L_\infty}$ as $i\to \infty$, sparse selections of polynomial degrees $F$ lead to convergence bounds in terms of $\#F$ without a dependence on the number of active parameters \cite{CDS:11,BCM:17}.

Note that the coefficients $u_\nu$ in such polynomial expansions are still functions on $D$. Their approximability depends on the parameterization chosen for $a$: in particular, when $(\theta_i)_{i \in \N}$ is a dictionary with wavelet-type multilevel structure, the $u_\nu$ each have distinctive localized features that make them particularly amenable to finite element approximations with meshes adapted individually for each $\nu \in F$ \cite{BCDS:17}. Such adaptive discretizations can be realized based on stochastic Galerkin formulations, that is, joint variational formulations in terms of spatial and parametric variables. With spatial discretizations by adaptive selections from wavelet bases for each $u_\nu$, convergent algorithms of optimal computational complexity have been obtained in \cite{BV}.

However, the common case of adaptive finite element discretizations in space considered in this work poses additional difficulties when allowing independent meshes for each $u_\nu$. Since a stochastic Galerkin discretization leads to an elliptic system of PDEs (for $\#F$ solution components), the associated residuals generally contain edge functionals on various levels of refinement due to the interactions of different meshes. This means that a single step of a standard solve-mark-refine cycle may not necessarily ensure a uniform error reduction \cite{CDN:12}. Moreover, since Galerkin orthogonality cannot be ensured practically in this setting, standard residual error estimation techniques are not applicable.

We circumvent both of these issues by an error estimation strategy based on finite element frame coefficients of residuals (used in different contexts in \cite{HS:16,HSS:08}), which does not require Galerkin orthogonality, can naturally handle edge functional terms on different mesh levels, and allows for several refinements of a given element in one step of the adaptive scheme where required. We prove error reduction by a uniform factor in each step of the resulting scheme, which is also known as \emph{saturation property}. This is the first result of this kind since in the existing works (for example, \cite{CPB:19} and~\cite{BPR21}) on adaptive stochastic Galerkin methods with independent adaptive meshes for each $u_\nu$, this property was instead only \emph{assumed} to hold. We consequently prove convergence of the derived adaptive method.

We would like to stress that this convergence result by itself does \emph{not} depend on $(\theta_i)_{i \in \N}$ in the expansion of $a$ to exhibit a multilevel structure; it could be obtained in the same way for common choices of $\theta_i$ that are globally oscillating on $D$. However, we focus on the case of wavelet-like multilevel expansions of $a$ because such expansions are a natural choice, leading to favorable complexity bounds for each component of the adaptive scheme that otherwise are not attainable. Exploiting this type of multilevel structure, we show that the costs for each routine in the adaptive scheme have (up to logarithmic factors) optimal scaling. In our numerical tests, we observe the method to yield approximations converging at the expected optimal rate, with its runtime costs following the same optimal rate.

\subsection{Problem statement}

On a polygonal domain $D \subset \R^d$, where typically $d \in \{1,2,3\}$, we consider the elliptic model problem  
\begin{equation}\label{eq:diffusionequation}
   - \nabla \cdot ( a \nabla u ) = f \quad \text{on $D$,} \qquad u = 0 \quad \text{on $\partial D$,}
\end{equation}
in weak formulation with $f \in L_2(D)$.  Assuming $\cM_0$ to be a countable index set with $0 \in \cM_0$ and taking $\cM = \cM_0 \setminus \{ 0 \}$, the parameter-dependent coefficient $a$ is assumed to be given by an affine parameterization
\begin{equation} \label{eq:affinecoeff}
a(y) = \theta_0 + \sum_{\mu\in \cM} y_\mu \theta_\mu, \quad y = (y_\mu)_{\mu\in\cM} \in Y = [-1,1]^\cM
\end{equation}
 with $\theta_\mu \in L_\infty(D)$ for $\mu \in \cM_0$ and $\einf_D \theta_0 > 0$. Well-posedness is ensured by the \emph{uniform ellipticity condition} \cite{CDS:11}
\begin{equation}\label{eq:uniformellipticity} 
   c_B = \einf_D \biggl\{  \theta_0 - \sum_{\mu \in \cM} \abs{\theta_\mu} \biggr\} > 0.
\end{equation}
Note that as a consequence, for $C_B = \sup_{y \in Y} \norm{ a(y) }_{L_\infty(D)}$ we have
\begin{equation}\label{eq:CB}
  C_B  \leq 2 \norm{\theta_0}_{L_\infty} - c_B.
\end{equation}
To fix a probability distribution of the random coefficients $y \in Y$ 
in~\eqref{eq:affinecoeff}, we now introduce a product measure $\sigma$ on $Y$. For simplicity, we take $\sigma$ to be the uniform measure on $Y$, where $\sigma = \bigotimes_{\mu\in\cM} \sigma_1$ with $\sigma_1$ the uniform measure on $[-1,1]$. With $V = H^1_0(D)$, for each given $y \in Y$ let $u(y) \in V$ be defined by
\begin{equation}\label{eq:pwweakform}
  \int_D a(y) \nabla u(y)\cdot \nabla v \sdd x = \int_D f v \sdd x\quad\text{for all $v \in V$.}
\end{equation}
Then by~\eqref{eq:uniformellipticity}, the mapping 
$  Y \ni y\mapsto u(y) \in V $
can be regarded as an element of the Bochner space
\[
  \cV := L_2(Y, V, \sigma) \simeq V \otimes L_2(Y,\sigma).
\]

From the univariate Legendre polynomials $\{ L_k \}_{k\in\N}$ that are orthonormal with respect to the uniform measure on $[-1,1]$, we obtain the orthonormal basis $\{ L_\nu \}_{\nu\in \cF}$ of product Legendre polynomials for $L_2(Y,\sigma)$, which for $y\in Y$ are given by
\[
L_\nu(y) = \prod_{\mu \in \cM} L_{\nu_\mu}(y_\mu) ,\quad \nu\in\cF = \{ \nu \in \N_0^\cM \colon \text{$\nu_\mu \neq  0$ for finitely many $\mu \in \cM$}\} ;
\]
see, for example, \cite{Schwab:11,CD}. For $u \in \cV$ solving~\eqref{eq:diffusionequation} with coefficient~\eqref{eq:affinecoeff} in the sense of~\eqref{eq:pwweakform}, we thus have the basis expansion
\begin{equation}\label{eq:legendreexpansion}
   u(y) = \sum_{\nu \in \cF} u_\nu L_\nu(y), \quad u_\nu = \int_Y u(y)\,L_\nu(y)\sdd\sigma(y) \in V .
\end{equation}
By restricting the summation over $\nu$ in~\eqref{eq:legendreexpansion} to a finite subset $F \subset \cF$, we obtain \emph{semidiscrete} best approximations by elements of $V \otimes \Span \{ L_\nu \}_{\nu \in F}$. To obtain fully discrete computable approximations, for each $\nu$ the coefficient $u_\nu \in V$ needs to be replaced by a further approximation in some finite-dimensional subspace $V_\nu \subset V$. We thus aim to find an approximation of $u$ from a subspace
\begin{equation}\label{eq:subspacecV}
   \tilde\cV = \biggl\{ \sum_{\nu \in F} v_\nu L_\nu \colon v_\nu \in V_\nu, \nu \in F\biggr\} \subset \cV  
\end{equation}
of total dimension $\sum_{\nu \in F} \dim V_\nu$. In the present work, each $V_\nu$ is chosen as a suitable finite element subspace of $V$.

\subsection{Parameter expansions and approximability}

Different types of expansion~\eqref{eq:affinecoeff} can be used for the parametrized coefficient $a$. A typical choice are expansions with similar properties as Karhunen-Lo\`eve representations of random fields, where the functions $\theta_\mu$, $\mu \in \mathcal M$, oscillate with increasing frequencies on all of $D$. In the case $d=1$ with $D = (0,1)$, a popular test case of this type is
\begin{equation}\label{eq:kl1d}
   a(y)(x) = 1 + c \sum_{j=1}^\infty j^{-\beta} y_j \sin ( j \pi x ), \quad x \in D=(0,1),\; y \in Y,
\end{equation}
with $\beta > 1$ and sufficiently small $c>0$, corresponding to the choice $\theta_0 = 1$, $\mathcal M = \N$, and $\theta_j(x) = c j^{-\beta} \sin ( j \pi x )$ for each $ j \in\mathcal M$. In the limiting case $\beta =1$, the functions $\theta_j$ result from the Karhunen-Lo\`eve (KL) decomposition of the Brownian bridge on $(0,1)$, and with the random coefficients $y \in Y$ distributed according to $\sigma$ in~\eqref{eq:kl1d} one obtains analogous smoothness properties of random draws of $a(y)$.

However, coefficients $a$ with very similar features can also be obtained by different expansions with multilevel structure. A basic example, again on $D=(0,1)$, are hierarchical piecewise linear hat functions: let $\theta(x) = \max\{  1 - 2\abs{x - \frac12} , 0 \}$ and 
\[
    \theta_{\ell,k} (x) = \theta(2^\ell x - k), \quad (\ell, k) \in \mathcal M = \bigl\{ (\ell, k) \colon \ell \in \N_0, \; k \in \{0,\ldots, 2^\ell - 1\} \bigr\}.
\]
Then for any $\alpha > 0$,
\begin{equation}\label{eq:lc1d}
  a(y)(x) = 1 + c \sum_{(\ell, k)\in\mathcal M}  2^{-\alpha \ell} y_{\ell,k} \theta_{\ell, k}(x), \quad x \in D=(0,1),\; y \in Y,
\end{equation}
yields a random field $a$ with very similar features as~\eqref{eq:kl1d}, but expanded in terms of the locally supported functions $\theta_{\ell,}$ with $\abs{\supp \theta_{\ell,k} } = 2^{-\ell}$. The choice $\alpha= \frac12$ corresponds to the well-known L\'evy-Ciesielski representation of the Brownian bridge \cite{Ciesielski:61}. 
Expansions with similar multilevel structure can also be obtained for more general Gaussian random fields on higher-dimensional domains with Mat\'ern-type or related covariances \cite{BCM:18}.

One advantage of the multilevel representation~\eqref{eq:lc1d} in view of the uniform ellipticity condition~\eqref{eq:uniformellipticity} is that coefficients with lower smoothness than in~\eqref{eq:kl1d} can be realized: for any $\alpha>0$, the series in~\eqref{eq:lc1d} converges absolutely in $L_\infty(0,1)$ uniformly in $y \in Y$, and thus one can parameterize coefficients with arbitrarily low H\"older regularity.

However, representations with multilevel structure as in~\eqref{eq:lc1d} are also advantageous for the convergence rates of adaptive sparse approximations of $u$. 
One observes that the localization in the hat functions $\theta_{\ell,k}$ in~\eqref{eq:lc1d} translates to highly localized features in the Legendre coefficients $u_\nu$, depending on the coordinates that are activated in $\nu$. This is illustrated in the case $\alpha=1$ in Figure \ref{fig:unu}.
As one can recognize, these coefficients have efficient approximations by piecewise linear finite elements on \emph{adaptive} grids. However, these grids clearly need to be chosen with a different local refinement for each $\nu$, so that the corresponding subspaces $V_\nu$ in~\eqref{eq:subspacecV} differ accordingly.

\begin{figure}
\centering
\includegraphics[width=14cm]{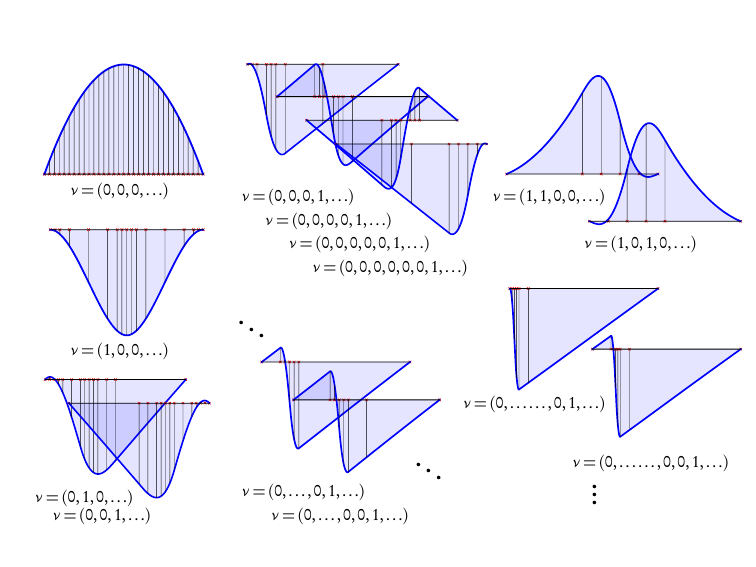}
\vspace{-.8cm}
\caption{Plots of Legendre coefficients $u_\nu \in H^1_0(0,1)$ (normalized to equal $\norm{u_\nu}_{L\infty}$) in the expansion~\eqref{eq:legendreexpansion} with $a$ given by the hierarchical hat function expansion~\eqref{eq:lc1d} with $\alpha=1$, and nodes of adaptively generated piecewise linear approximations on dyadic subintervals.}\label{fig:unu}
\end{figure}

Similar results can be obtained for more general domains $D$ in dimensions $d>1$ when the diffusion coefficient $a$ is given in terms of an expansion with an analogous multilevel structure. The precise assumptions that we use on such multilevel structure for general $d$ are as follows.

\begin{assumption}\label{ass:wavelettheta}
  We assume $\theta_\mu \in L_\infty(D)$ for $\mu \in \cM_0$ such that  the following hold for all $\mu \in \cM$:
  \begin{enumerate}[{\rm(i)}] 
  \item $\diam \supp \theta_\mu\sim 2^{-|\mu|}$, 
  \item $\#\{ \mu \in \cM \colon \abs{\mu } = \ell \} \lesssim 2^{d \ell}$ for each $\ell \in \N_0$ and there exists $M>0$ such that for each $\mu$,
  \[
    \#\{ \mu' \in \cM \colon |\mu|=|\mu'|,\, \supp \theta_{\mu}\cap \supp \theta_{\mu'} \neq \emptyset \} \leq M ,
  \] 
  \item for some $\alpha>0$, one has $\norm{\theta_\mu}_{L_\infty(D)} \lesssim 2^{-\alpha |\mu|}$.
  \end{enumerate}
\end{assumption}

As in the above one-dimensional example, representing $a$ in terms of such multilevel basis functions with localized supports leads to favorable sparse approximability of $u$.
As a consequence of \cite[Cor.~8.8]{BCDS:17}, for sufficiently regular $f$ and $D$ and expansions of $a$ according to Assumption \ref{ass:wavelettheta}, for $\alpha \in (0,1]$ and with $\tilde\cV$ as in~\eqref{eq:subspacecV} we have
\begin{equation}\label{eq:approxrate}
 \min_{v \in \tilde\cV} \norm{u - v}_{\cV} 
   \leq C \Bigl( \sum_{\nu \in F} \dim V_\nu \Bigr)^{-s} \quad \text{for any $\displaystyle s < \begin{cases} \textstyle\frac{\alpha}{d} , & d \geq 2, \\[6pt] \textstyle\frac{2}{3}\alpha , & d = 1 . \end{cases}$}
\end{equation}
The numerical tests in \cite{BV} indicate that with spatial basis functions of sufficiently high approximation order, this also holds for $\alpha>1$.

Remarkably, the limiting convergence rate $\alpha/d$ in~\eqref{eq:approxrate} for $d\geq 2$ is the same as for approximating $u(y)$ by finite elements in $V$ for randomly chosen $y\sim \sigma$ (see \cite{BCD:17}), and also the same as the rate with respect to $\#F$ for semidiscrete approximation of $u$ from $V \otimes \Span \{ L_\nu \}_{\nu \in F}$ obtained in \cite{BCM:17}. In other words, when $a$ is given as a multilevel expansion, when using independent adaptive approximations for each Legendre coefficient $u_\nu$, $\nu \in F$, one can achieve sufficiently strong sparsification that combined spatial-parametric approximations converge at the same rate as only spatial or only parametric approximations. In contrast to results obtained using expansions of $a$ without further structure, these results hold even when convergence is limited by the decay rate $\alpha$ in the coefficient expansion, rather than by the spatial approximation rate of the finite elements.

\subsection{Stochastic Galerkin discretization}

We define $B\colon \cV \to \cV'$ and $\Phi \in \cV'$ by
\begin{equation}\label{eq:Bdef}
    \langle B v , w \rangle =
       \int_Y \int_D \biggl( \theta_0 + \sum_{\mu\in \cM} y_\mu \theta_\mu  \biggr) \nabla v(y) \cdot \nabla w(y)\sdd x \sdd\sigma(y), \quad
    \langle \Phi, w\rangle = \int_Y f\bigl(w(y) \bigr) \sdd\sigma(y) \,.
\end{equation}
The operator $B$ induces an inner product $\langle \cdot, \cdot \rangle_B = \langle B\cdot, \cdot\rangle$ with corresponding norm $\norm{\cdot}_B$ on $\cV$.
Note that
\begin{equation}
  \label{eq:norm equivalence}
  \sqrt{c_B} \norm{ v }_{\cV}= \sqrt{ \inf_{y\in Y} a(y) } \,\norm{ v }_{\cV} \leq \norm{ v }_B \leq  \sqrt{ \sup_{y\in Y} a(y)}\, \norm{ v }_{\cV} = \sqrt{C_B} \norm{ v }_{\cV}.
\end{equation}
As a consequence,
\begin{equation}\label{eq:dual norm equivalence}
  \sqrt{c_B} \norm{v}_B \leq \frac{\langle Bv, v\rangle}{\norm{v}_{\cV}} \leq \norm{ B v }_{\cV'} = \sup_{w \in \cV} \frac{\langle Bv, w\rangle}{\norm{w}_{\cV}} \leq \sup_{w \in \cV} \frac{\langle Bv, w\rangle}{C_B^{-1/2}\norm{w}_{B}} = \sqrt{C_B} \norm{v}_B.
\end{equation}
The stochastic variational formulation of~\eqref{eq:diffusionequation} with coefficient $a$ given by~\eqref{eq:affinecoeff} then reads: find $u \in \cV$ such that
\begin{equation}\label{eq:stochvarform}
   B u = \Phi \quad\text{in $\cV'$.}
\end{equation}
Inserting product Legendre expansions as in~\eqref{eq:legendreexpansion} of $u,v$ into~\eqref{eq:stochvarform} leads to the \emph{semidiscrete form} of the stochastic Galerkin problem for the coefficient functions $u_\nu$, $\nu \in\cF$,
\begin{equation}\label{semidiscreteform}
 \sum_{\mu \in \cM_0} \sum_{\nu' \in \cF} (\bM_\mu)_{\nu,\nu'} A_\mu u_{\nu'}  =  \delta_{0, \nu} f ,
  \; \nu \in \cF,
\end{equation}
where $A_\mu\colon V\to V'$ are defined, for $\mu \in \cM_0$, by
\[
  \langle A_\mu v,w\rangle = \int_D \theta_\mu \nabla v \cdot \nabla w\sdd x\quad \text{for all $v,w\in V$,}
\]
and the mappings $\bM_\mu \colon \ell_2(\cF)\to\ell_2(\cF)$ are given by
\[
\begin{aligned}
 \bM_0  = \left( \int_{Y}  L_\nu(y) L_{\nu'}(y) \sdd\sigma(y)  \right)_{\!\!\nu,\nu'\in\cF}, \qquad
 \bM_\mu = \left( \int_{Y} y_\mu  L_\nu(y) L_{\nu'}(y) \sdd\sigma(y)  \right)_{\!\!\nu,\nu'\in\cF},\; \mu \in \cM.
 \end{aligned}
\]
Since the $L_2([-1,1],\sigma_1)$-orthonormal Legendre polynomials $\{ L_k\}_{k\in \N}$ satisfy the three-term recurrence relation
\[
    y L_k(y) =\sqrt{\beta_{k+1}} L_{k+1}(y) +  \sqrt{\beta_k} L_{k-1}(y), \quad\beta_k = (4 - k^{-2})^{-1} ,
\]
with $L_0 = 1$, $L_{-1} = 0$, $\beta_0 = 0$, we have
\[
\begin{aligned}
   \bM_0 = \bigl( \delta_{\nu,\nu'} \bigr)_{\!\nu,\nu' \in \cF}, \qquad
   \bM_\mu = \biggl( \sqrt{\beta_{\nu_\mu+1}} \,\delta_{\nu+e_\mu, \nu'} + \sqrt{\beta_{\nu_\mu}}\, \delta_{\nu-e_\mu, \nu'} \biggr)_{\!\!\nu,\nu' \in \cF},\; \mu \in \cM,
\end{aligned}
\]
with the Kronecker vectors $e_\mu = ( \delta_{\mu,\mu'} )_{\mu' \in \cM}$.

In the remainder of this work, for simplicity we formulate our method and its analysis for $d=2$. The results carry over immediately to the cases $d=1$ and $d > 2$.
We assume a fixed conforming simplicial coarsest triangulation $\initmesh$ of $D$. If a second (not necessarily conforming) triangulation $\cT$ can be generated from $\initmesh$ by steps of newest vertex bisection, we write $\cT \geq \initmesh$. For such triangulations that are in addition conforming, we consider the standard Lagrange finite element spaces 
\[
  V(\cT) = \PP_1(\cT) \cap V,
\]
where $\PP_1(\cT)$ denotes the functions on $D$ that are piecewise affine on each element of $\cT$.
   
Assume a family of triangulations $\mathbb{T} = (\cT_\nu)_{\nu \in F}$ with finite $F\subset \cF$ and conforming $\cT_\nu \geq \hat\cT_0$ for each $\nu \in F$.
We consider stochastic Galerkin discretization subspaces $\cV(\mathbb{T})$ given in terms of~\eqref{eq:subspacecV} by 
\begin{equation}\label{eq:discrspace}
  \cV(\mathbb{T}) = \biggl\{ \sum_{\nu \in F} v_\nu L_\nu \colon  v_\nu \in V(\cT_\nu),\ \nu \in F  \biggr\} \subset \cV.
\end{equation}
 The total number of degrees of freedom for representing each element of $\cV(\mathbb{T})$ is then $\dim \cV(\mathbb{T})=\sum_{\nu \in F} \dim V(\cT_\nu)$. We use the abbreviation
\begin{equation}\label{eq:Ndef}
  N(\mathbb{T}) = \sum_{\nu \in F} \# \cT_\nu ,
\end{equation}
so that $N(\mathbb{T}) \eqsim \dim \cV(\mathbb{T})$. 

For the finite-dimensional subspaces $\cV(\mathbb{T})\subset \cV$, we consider the stochastic Galerkin variational formulation for $u_\mathbb{T} \in \cV(\mathbb{T})$,
\begin{equation}\label{eq:stochgalerkin}
   \langle B u_\mathbb{T} , v \rangle = \Phi(v) \quad\text{for all $v \in \cV(\mathbb{T})$.}
\end{equation}
As a consequence of~\eqref{eq:uniformellipticity}, the bilinear form given by the left hand side of~\eqref{eq:stochgalerkin} is elliptic and bounded on $\cV$, and by C\'ea's lemma we obtain
\[
   \norm{ u_\mathbb{T} - u }_\cV \leq \frac{2 \norm{\theta_0}_{L_\infty} - c_B}{c_B} \min_{v \in \cV(\mathbb{T})} \norm{v - u}_\cV \,.
\]

\subsection{Adaptive scheme and novel contributions}
The objective of this paper is to construct an adaptive stochastic Galerkin finite element method that can make full use of the approximability result~\eqref{eq:approxrate} by performing independent refinements of meshes for each Legendre coefficient; in other words, for each $u_\nu$ we use a potentially completely different finite element subspace $V(\cT_\nu)$.
Our discretization refinement indicators are obtained from residuals in a stochastic Galerkin formulation of the problem.
The basic strategy for doing so follows our earlier results in \cite{BV}, where we used adaptive wavelet discretizations rather than finite elements for the spatial variable. 
As there, the first step for obtaining residual approximations in the present work is a semidiscrete adaptive operator compression only in the parametric variables, which identifies the relevant interactions between different Legendre coefficients.

Based on this information, the errors in the individual spatial discretizations of the Legendre coefficients need to be handled.
As an initial step towards results on optimal computational costs, as achieved with spatial discretizations by wavelets in \cite{BV}, in the present work we propose and analyze an adaptive method using standard $\PP_1$ finite elements that in each refinement step ensures a fixed error reduction factor, a feature that is also known as \emph{saturation property}. 
This guarantee is in contrast to previous works using independent finite element subspaces $V(\cT_\nu)$ with potentially different meshes $\cT_\nu$ for the different coefficients $u_\nu$, such as \cite{EGSZ:14,CPB:19,BPR21}: in particular, the convergence analysis given in \cite{CPB:19} and \cite{BPR21} is based on \emph{assuming} the saturation property to hold for a certain hierarchical error estimation strategy. 

The technique that we use here to obtain a proof of the saturation property does \emph{not} depend on the wavelet-type multilevel structure of $\{ \theta_\mu \}_{\mu \in \cM}$ as given by Assumption \ref{ass:wavelettheta}. It can be used in the same form whenever each $\theta_\mu$ can be approximated by piecewise polynomials of fixed degree on a sufficiently fine uniform mesh. It hence can similarly be applied to Karhunen-Lo\'eve-type expansions as in \eqref{eq:kl1d}. Thus, our approach would also lead to a provably convergent method for such more general expansions. The main reason why we focus on expansions of the diffusion coefficient with multilevel structure in what follows is that by exploiting the localization of the $\theta_\mu$, we can construct the adaptive solver such that each subroutine has near-optimal computational costs.

One can distinguish two separate difficulties in proving the saturation property for refinements based on the residual in a stochastic Galerkin method. First, in the stochastic variables, we need to identify product polynomial indices that should be added to the approximation subset $F\subset \cF$. Strategies that ensure an error reduction have previously been considered, for example, in \cite{Gittelson:13,EGSZ:15}; these, however, rely on summability of the norms $\norm{\theta_\mu}_{L_\infty}$, which for multilevel expansions of random fields is too strong a requirement.
To exploit the multilevel structure in the expansion~\eqref{eq:affinecoeff} and obtain a scheme that enables linear scaling of computational costs, we instead adapt the approach of \cite{BV} based on operator compression in the stochastic variables. This amounts to adaptively dropping summands in the semidiscrete operator representation $\bigl(\sum_{\mu \in \cM_0} \sum_{\nu' \in \cF} (\bM_\mu)_{\nu,\nu'} A_\mu\bigr)_{\!\nu,\nu'\in\cF}$ as in~\eqref{semidiscreteform} that acts on sequences of Legendre coefficients.

The second and more subtle difficulty arises in the refinement of spatial discretizations. In the case of discretizations with a single spatial mesh (that is, where the meshes $\cT_\nu$ are identical for all $\nu \in F$), standard residual error estimation techniques can be applied as in \cite{EGSZ:15}. Since we use \emph{independent} meshes for each $\nu$, each spatial component of the residual contains sums of jump functionals corresponding to different meshes, which cannot be treated using standard techniques based on Galerkin orthogonality.

Moreover, with such residual components that are proper functionals in $H^{-1}(D)$ arising from integrals over the edges (or facets) of different meshes, it is not clear \emph{a priori} whether a method using a solve-mark-refine cycle can actually achieve a reduction of the error (or a suitable notion of a \emph{quasi-error} \cite{CKNS:08}) by a fixed factor in every iteration: as considered in detail in \cite{CDN:12}, this may require several refinement steps. In \cite{CDN:12}, which addresses adaptive finite elements for standard scalar elliptic problems with right-hand sides that are not in $L_2(D)$, an additional inner refinement loop is introduced. However, this approach is not easily generalizable to our setting, since in the present case the functionals in question depend on the approximate solution.

We thus use a different strategy for error estimation and mesh refinement that is based on residual indicators obtained with a BPX finite element frame \cite{BPX,HS:16,O:94}. Such a frame can be obtained by taking the union over all standard finite element basis functions associated to a hierarchy of grid refinements. Frames of this type have been used in \cite{HSS:08} in the construction of sparse tensor products and in \cite{HS:16} in the analysis of hierarchical error estimation. We approximate $H^{-1}$-norms via frame coefficients similarly as outlined in \cite[Rem.~6.4]{HS:16}, where in contrast to the graded quadrilateral meshes assumed in \cite{HS:16}, we work with meshes refined via standard newest vertex bisection.

\subsection{Outline}
We begin by recapitulating basic properties of newest vertex bisection and of finite element frames in \Cref{sec:frames}. In \Cref{sec:adaptive solver}, we describe the adaptive stochastic Galerkin solver and collect the relevant properties of its constituent procedures, which we use in \Cref{sec:convergence} to prove convergence of the method with a fixed error reduction in each step. 
\Cref{sec:experiments} is devoted to numerical tests that, in view of the expected convergence rates \eqref{eq:approxrate}, hint at  optimality properties of the method. In \Cref{sec:conclusion}, we summarize our findings and point out several directions for further work.

\section{Finite element frames}\label{sec:frames}

Our adaptive scheme is driven by error estimates derived from the norm in $\cV' = L_2(Y, V', \sigma)$ of the residual for a given approximation $v = \sum_{\nu \in \cF} v_\nu L_\nu \in L_2(Y, V, \sigma)$ in the stochastic variational formulation~\eqref{eq:stochvarform},
\begin{equation}\label{eq:res}
   \norm{B v - \Phi}_{\cV'} = \biggl( \sum_{\nu \in \cF} \,\Bignorm{\sum_{\mu \in \cM_0} \sum_{\nu' \in \cF} (\bM_\mu)_{\nu,\nu'} A_\mu v_{\nu'}  -  \delta_{0, \nu} f }_{V'}^2 \biggr)^{\frac12}.
\end{equation}
Since we are mainly interested in the effect of the parameter-dependent diffusion coefficient, to avoid technicalities, in what follows we assume the right-hand side $f$ to be piecewise polynomial on the initial spatial mesh $\hat\cT_0$, from which all meshes arising in the spaces~\eqref{eq:discrspace} are generated by newest vertex bisection.
The treatment of more general $f$ is discussed in Remark~\ref{rem:generalf}. 

Our main task in the approximation of residuals is thus to identify the most relevant indices in the summation over $\nu$ in~\eqref{eq:res}, and for each such $\nu$ to approximate the $V'$-norm of linear combinations of terms of the form $A_\mu v_{\nu'}$, where $v_{\nu'} \in V(\cT_{\nu'})$ for some conforming $\cT_{\nu'} \geq \initmesh$. For the latter task, we use finite element frames derived from a hierarchy of uniform refinements of~$\initmesh$.

For newest vertex bisection, our starting point is a general initial triangulation $\initmesh$, which we endow with an edge labelling that is admissible in the sense of \cite[Lem.~2.1]{BDD}; that is, each edge in $\initmesh$ is labelled either~$0$ or~$1$ such that each triangle in $\initmesh$ has exactly two edges labelled~$1$ and one edge labelled~$0$.
Such a labelling for general conforming two-dimensional triangulations can always be found~\cite[Lem.~2.1]{BDD}, but requires solving a combinatorial problem. For larger initial meshes, it suffices instead to bisect every triangle twice and take the resulting triangulation as $\initmesh$, where edges that connect two newly created vertices are labelled by $0$ and all others by $1$.

Newest vertex bisection is then applied to a triangle with labels $(i,i,i-1)$ by bisecting the edge with the lowest label $i-1$ and assigning the label $i+1$ to both halves of the bisected edge and to the newly added bisecting edge, so that the two newly created triangles both have labels $(i+1,i+1,i)$ and the edges opposite the newly added vertex will be the next to be bisected. The meshes generated by newest vertex bisection are uniform shape regular, dependent only on the initial triangulation~$\hat\cT_0$.

\begin{remark}

Bisection can be used in an analogous manner in higher dimensions, as shown by Stevenson~\cite{Stevenson:NVB08}. The essential condition is a guarantee that uniform refinements are again conforming. A construction that guarantees this condition can again be obtained by subdivision; see the appendix of~\cite{Stevenson:NVB08}. Although we focus here on $d=2$, using these features of the higher-dimensional analogue of newest vertex bisection, the developments that follow can be extended to $d\geq 3$. 
\end{remark}

Note that after two applications of newest vertex bisection to a mesh with admissible initial labelling, every edge has been bisected once. This gives rise to a hierarchy of meshes $\hat\cT_1, \hat\cT_2,\ldots$, where $\hat\cT_{j+1}$ is obtained from $\hat\cT_j$ by applying two passes of newest vertex bisection to the full mesh, such that a new node is added to each edge in $\hat\cT_j$.
For each $j$, we define $\psi_\lambda$ with $\lambda = (j,k)$ as the enumeration of piecewise affine hat functions in $V(\hat \cT_j)$, normalized such that $\norm{ \psi_\lambda}_V = 1$, where $k = 1,\ldots, N_j = \dim V(\hat\cT_j)$.
We assume that $\hat \cT_0$ gives rise to a nontrivial finite element space, i.e., $N_0 > 0$, and set
\[
\Theta =  \bigl\{ (j,k) \colon  j = 0,1,2,\ldots , \; k = 1,\ldots, N_j  \bigr\}.
\]
The family $\psi_\lambda$, $\lambda\in \Theta$, of hat functions on all levels of the uniformly refined grid hierarchy (that is, the function system underlying the classical BPX preconditioner \cite{BPX}) is then a \emph{frame} of $V$, which means that for $\xi \in V'$, we have a proportionality with uniform constants between $\norm{(\langle \xi,\psi_\lambda\rangle)_{\lambda \in \Theta}}_{\ell_2(\Theta)}$ and $\norm{\xi}_{V'}$.
Under the given assumptions on the mesh hierarchy $(\hat\cT_j)_{j\geq 0}$, we obtain the result of \cite[Thm.~5]{HSS:08} in the following form.
 
\begin{lemma}\label{lm:frame}
   The family $\Psi = ( \psi_\lambda)_{\lambda \in \Theta}$ is a frame of $V$, that is, there exist $c_\Psi, C_\Psi >0$ depending only on $\initmesh$ such that
   \begin{equation}\label{eq:framecondition}
     c_\Psi^2 \norm{ \xi }_{V'}^2 
     \leq \sum_{\lambda \in  \Theta} \abs{\langle \xi, \psi_\lambda\rangle }^2 
     \leq C_\Psi^2 \norm{ \xi }_{V'}^2,\; \xi \in V'.  
   \end{equation}
\end{lemma}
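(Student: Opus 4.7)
The plan is to reduce the frame inequality~\eqref{eq:framecondition} to the classical BPX multilevel norm equivalence for $V = H^1_0(D)$ on the uniformly refined hierarchy $(\hat\cT_j)_{j\geq 0}$. Define the synthesis operator $S\colon \ell_2(\Theta) \to V$ by $Sc = \sum_\lambda c_\lambda \psi_\lambda$ and the analysis operator $F\colon V' \to \ell_2(\Theta)$ by $F\xi = (\langle \xi,\psi_\lambda\rangle)_{\lambda \in \Theta}$. Under the canonical identifications $F = S^*$, so \eqref{eq:framecondition} is equivalent to the conjunction that $S$ is bounded (yielding the upper frame bound $C_\Psi$) and that $S$ is surjective with a bounded right inverse (yielding the lower frame bound $c_\Psi$).

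First I would establish boundedness of $S$. At each level $j$, shape regularity of $\hat\cT_j$ gives bounded overlap of the nodal supports, hence $\norm{v_j}_V^2 \lesssim \sum_k |c_{j,k}|^2$ for $v_j := \sum_k c_{j,k}\psi_{j,k} \in V(\hat\cT_j)$. Across levels the key ingredient is a strengthened Cauchy--Schwarz estimate of the form $|\langle v_j, v_{j'}\rangle_V| \lesssim 2^{-\delta |j - j'|}\norm{v_j}_V\norm{v_{j'}}_V$ for some $\delta>0$, valid for the nested uniformly refined hierarchy. Summing the resulting geometric series yields $\norm{Sc}_V^2 \lesssim \sum_j \norm{v_j}_V^2 \lesssim \norm{c}_{\ell_2}^2$.

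For the stable decomposition, I would invoke a Scott--Zhang quasi-interpolant $Q_j\colon V \to V(\hat\cT_j)$ with $Q_{-1} = 0$. The BPX norm equivalence
\[
  \sum_{j \geq 0} h_j^{-2}\, \norm{(Q_j - Q_{j-1})v}_{L_2(D)}^2 \eqsim \norm{v}_V^2, \qquad h_j \eqsim 2^{-j},
\]
combined with the $L_2$ Riesz property $\bignorm{\sum_k d_{j,k}\phi_{j,k}}_{L_2}^2 \eqsim h_j^d \sum_k |d_{j,k}|^2$ for the unnormalized nodal basis $\phi_{j,k}$ at level $j$, and the scaling $\norm{\psi_{j,k}}_V \eqsim h_j^{(d-2)/2}$, produces coefficients $(c_\lambda)$ with $v = \sum_\lambda c_\lambda \psi_\lambda$ in $V$ and $\norm{c}_{\ell_2} \lesssim \norm{v}_V$. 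Passing to adjoints then gives~\eqref{eq:framecondition} with constants depending only on the shape regularity of the hierarchy, which in turn depend only on $\initmesh$.

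The main obstacle is the cross-level strengthened Cauchy--Schwarz estimate: the within-level bounded-overlap bound is elementary, whereas controlling interactions between $V(\hat\cT_j)$ and $V(\hat\cT_{j'})$ for $j \neq j'$ requires exploiting the approximation and regularity properties of the quasi-interpolants together with the nestedness $V(\hat\cT_j) \subset V(\hat\cT_{j+1})$; this is the nontrivial analytic content underlying the BPX theory that one would be invoking (and which, in the form needed here, is supplied by~\cite{HSS:08}).
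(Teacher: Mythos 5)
The paper does not prove Lemma~\ref{lm:frame}; it invokes it directly as a special case of \cite[Thm.~5]{HSS:08}, so you are reconstructing the argument underlying the cited result rather than following the paper. Your reduction to boundedness of the synthesis operator $S$ together with a stable decomposition of $V$, and the stable-decomposition half (Scott--Zhang quasi-interpolants, the multilevel norm equivalence $\sum_j h_j^{-2}\norm{(Q_j-Q_{j-1})v}_{L_2}^2\eqsim\norm{v}_V^2$, and the $L_2$-Riesz property of the scaled nodal bases), is the standard BPX-frame machinery and is correctly set up.

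The boundedness step, however, contains a concrete error. The strengthened Cauchy--Schwarz estimate you state,
\[
\abs{\langle v_j, v_{j'}\rangle_V}\ \lesssim\ 2^{-\delta\abs{j-j'}}\,\norm{v_j}_V\,\norm{v_{j'}}_V \quad\text{for } v_j\in V(\hat\cT_j),\ v_{j'}\in V(\hat\cT_{j'}),
\]
is false for nested spaces: take $v_j=v_{j'}=v\in V(\hat\cT_{\min(j,j')})\setminus\{0\}$ and Cauchy--Schwarz is an equality with no gain as $\abs{j-j'}$ grows. What actually holds, and what the argument requires, is the mixed $H^1$--$L_2$ version, e.g.\ for $j\le j'$
\[
\Bigabs{\int_D \nabla v_j\cdot\nabla v_{j'}\sdd x}\ \lesssim\ 2^{-(j'-j)/2}\,\norm{v_j}_V\, 2^{j'}\norm{v_{j'}}_{L_2(D)}\,.
\]
One then converts $2^{j'}\norm{v_{j'}}_{L_2}$ into the $\ell_2$-norm of the level-$j'$ coefficients via the within-level $L_2$-Riesz property of the $V$-normalized hats (which satisfy $\norm{\psi_{j',l}}_{L_2}\eqsim 2^{-j'}$), and $\norm{v_j}_V$ into the level-$j$ coefficient norm via bounded overlap; the summable geometric series thus holds between coefficient $\ell_2$-norms rather than between the $V$-norms of the functions. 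Writing the decay at the level of function $V$-norms is not merely imprecise but asserts an inequality that cannot hold, so the boundedness half of your argument must be repaired at this point before the remainder of the sketch goes through.
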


We assume each function $\theta_\mu$, $\mu \in \cM$, to be piecewise polynomial on a triangulation $\hat\cT_j$ for some $j$ depending on $\abs{\mu}$ in the following sense.

\begin{assumption}\label{ass:pwpolytheta}
  There exists $m\in\N_0$, $k\in \N_0$ and $K\in \N$ such that for all $\mu \in \cM$, $\theta_\mu \in \PP_m(\hat\cT_{\abs{\mu}+k})$ and $\#\{T\in \hat\cT_{\abs{\mu}+k}\colon \supp \theta_\mu \cap T\neq \emptyset\}\leq K$.
\end{assumption}

\begin{remark}\label{rem:MultilevelStructure}
Concerning the role of these assumptions, the following comments are in order:
\begin{enumerate}[{\rm(a)}]
\item The case of non-polynomial functions $\theta_\nu$ can be treated by replacing them by polynomial approximations. 
Approximability of the $\theta_\mu$ of a similar kind are required in any case for quadrature.
\item In its stated form, Assumption \ref{ass:pwpolytheta} applies to typical expansions with multilevel structure such as wavelet-like systems. For example, for functions in wavelet-type expansions of Mat\'ern random fields constructed in \cite{BCM:18}, it is not difficult to obtain suitable efficient polynomial approximations with uniform accuracy that satisfy Assumption \ref{ass:pwpolytheta}, see \cite[Eq.~(139)]{BCM:18}
\item\label{rem:MultiLevelErrorReduction} The multilevel structure of $\theta_\mu$ implied by Assumption \ref{ass:pwpolytheta} is not required for our main result of uniform error reduction in each step of our adaptive scheme, and thus in particular not for the convergence of the method. As a requirement to obtain this, it would suffice to assume that for a fixed $m$, for each $\mu \in \cM$ there exists $j(\mu)$ such that $\theta_\mu \in \PP_m(\hat\cT_{j(\mu)})$. However, the stronger Assumption \ref{ass:pwpolytheta} is crucial for the favorable computational complexity of the adaptive method. We return to this point in \Cref{rem:KLgeneralization}.
\item The same comments apply to $d\geq 3$, where Assumption \ref{ass:pwpolytheta} in its basic form means that each $\theta_\mu$ is represented (or approximated) by a piecewise polynomial on a mesh in a hierarchy of uniform refinements and has wavelet-type level-dependent localization.
\end{enumerate}
\end{remark}

For approximating dual norms in $V'$ of sums of functionals of the form $A_\mu v_\nu$ with $v_\nu \in V(\cT_\nu)$, with $\cT$ the joint refinement of $\cT_\nu$ and the mesh on which $\theta_\mu$ is piecewise polynomial, we use integration by parts to obtain
\begin{equation}\label{eq:intbyparts}
  \langle A_\mu v_\nu , w\rangle 
   = \int_D \theta_\mu \nabla v_\nu \cdot\nabla w\sdd x 
    = \sum_{K \in \cT} \left\{ 
      \int_{\partial K \cap D} \theta_\mu n \cdot  \nabla v_\nu \, w\sdd s 
      - \int_K \nabla \cdot (\theta_\mu \nabla v_\nu)\, w\sdd x\right\}
\end{equation}
for $w \in V$. Here the integrands $\theta_\mu n \cdot  \nabla v_\nu$ and $\nabla \cdot (\theta_\mu \nabla v_\nu)$ are polynomials on the respective subdomains by \Cref{ass:pwpolytheta}. We can thus apply the following result on the decay of $\ell_2$-tails of frame coefficients.
A related estimate is obtained in \cite[Sec.~6]{HS:16} with frames on graded quadrilateral meshes.

We define $\level(E)$ for an edge~$E$ and $\level(T)$ for a triangular element~$T$ as  the unique $j$ such that the uniform mesh~$\hat\cT_j$ contains $E$ or either $T$ or a bisection of $T$, respectively. Similarly, for the enumeration indices~$\lambda = (j,k)\in \Theta$, we define $\abs{\lambda} = j$.

\begin{lemma}\label{lmm:spatialrelerr}
   Let $d_1, d_2 \in \N_0$.
   There exist $C>0$ and $J_0 \in \N$ depending only on $\initmesh$ and $d_1, d_2$ such that for all $J\geq J_0$ the following holds:
 For any $\cT \geq \initmesh$ with interior edges $\mathcal{E}$ and any $\xi \in V'$ of the form 
 \begin{equation}\label{eq:xirepresentation}
      \langle \xi, v\rangle = \sum_{K \in \mathcal{T}} \int_K \xi_{K} v \sdd x + \sum_{E \in \mathcal{E}}  \int_E \xi_E v \sdd s ,\quad v \in V,
 \end{equation}
 where $\xi_K \in \PP_{d_1}(K)$, $K \in \mathcal{T}$, and $\xi_E \in \PP_{d_2}(E)$, $E \in \mathcal{E}$, we have
 \[
     \Bigl( \sum_{\lambda \in \Theta_{J}(\cT)} \bigabs{\langle \xi, \psi_\lambda\rangle }^2   \Bigr)^{\frac12} \leq C 2^{-J}  \Bigl(  \sum_{\lambda \in \Theta}  \bigabs{\langle \xi, \psi_\lambda\rangle }^2 \Bigr)^{\frac12} 
 \]
 with
 \begin{multline}\label{eq:ThetaJdef}
   \Theta_{J}(\cT) = \bigl\{  \lambda \in\Theta \colon  (\forall K \in \mathcal{T}\colon    \meas_2(\supp \psi_\lambda \cap K)> 0 \implies \abs{\lambda} > \level(K) + J )   \\
   \wedge   (\forall E \in \mathcal{E}\colon    \meas_1(\supp \psi_\lambda \cap E)>0 \implies \abs{\lambda} > \level(E) + 2J ) 
   \bigr\} .
 \end{multline}
\end{lemma}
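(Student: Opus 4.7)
The plan is to bound the tail sum element-by-element and edge-by-edge using the local polynomial structure of $\xi$, then to control the resulting local quantities by $\|\xi\|_{V'}^2$ via the Verf\"urth bubble-function technique, and to close the loop using the frame equivalence of Lemma~\ref{lm:frame}.

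First I decompose $\langle \xi, \psi_\lambda\rangle$ according to~\eqref{eq:xirepresentation}. Because the level constraints in~\eqref{eq:ThetaJdef} force $\supp \psi_\lambda$ to sit strictly inside a single element of $\cT$ and to meet only $O(1)$ edges in positive one-dimensional measure, Cauchy--Schwarz gives for each $\lambda \in \Theta_J(\cT)$
\[
    |\langle \xi, \psi_\lambda\rangle|^2 \lesssim \sum_{K \in \cT(\lambda)} \Bigabs{\int_K \xi_K \psi_\lambda \sdd x}^2 + \sum_{E \in \mathcal{E}(\lambda)} \Bigabs{\int_E \xi_E \psi_\lambda \sdd s}^2 ,
\]
where $\cT(\lambda), \mathcal{E}(\lambda)$ collect the $O(1)$ elements and edges whose relevant intersection with $\supp\psi_\lambda$ has positive measure. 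For a fixed $K$ at level $\ell = \level(K)$ and each $j > \ell + J$ there are $\lesssim 2^{2(j-\ell)}$ frame elements at level $j$ meeting $K$ in positive area, each satisfying $|\int_K \xi_K \psi_\lambda| \lesssim \|\xi_K\|_{L_\infty(K)}\cdot 2^{-2j}$ because $\|\psi_\lambda\|_{L_\infty} \lesssim 1$ and $\meas_2(\supp\psi_\lambda) \sim 2^{-2j}$. Summing the squares over such $\lambda$ and applying the polynomial inverse estimate $\|\xi_K\|_{L_\infty(K)}^2 \lesssim 2^{2\ell}\|\xi_K\|_{L_2(K)}^2$ (with constant depending on $d_1$ and $\initmesh$) yields a per-element contribution $\lesssim 2^{-2J}\cdot 2^{-2\ell}\|\xi_K\|_{L_2(K)}^2$. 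The edge argument is analogous with $\|\psi_\lambda\|_{L_1(E)} \sim 2^{-j}$ and a count of $\lesssim 2^{j-\ell_E}$ hats at each level; the sharper level threshold $|\lambda| > \ell_E + 2J$ compensates for the lower-dimensional measure and produces a per-edge contribution $\lesssim 2^{-2J}\cdot 2^{-\ell_E}\|\xi_E\|_{L_2(E)}^2$.

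Summing the local bounds over $K \in \cT$ and $E \in \mathcal{E}$ then gives
\[
    \sum_{\lambda \in \Theta_J(\cT)} |\langle \xi, \psi_\lambda\rangle|^2 \lesssim 2^{-2J} \Bigl( \sum_{K \in \cT} 2^{-2\level(K)}\|\xi_K\|_{L_2(K)}^2 + \sum_{E \in \mathcal{E}} 2^{-\level(E)}\|\xi_E\|_{L_2(E)}^2 \Bigr) .
\]
The bracketed expression is the standard residual estimator for a functional of the form~\eqref{eq:xirepresentation}, and I will bound it by $\|\xi\|_{V'}^2$ using the Verf\"urth bubble-function technique: testing $\xi$ against $\xi_K\beta_K$ with the interior bubble $\beta_K$ on each $K$ and against a polynomially extended edge bubble $\tilde\xi_E\beta_E$ on each $E$ gives local lower bounds $2^{-\level(K)}\|\xi_K\|_{L_2(K)} \lesssim \|\xi\|_{V'(\omega_K)}$ and $2^{-\level(E)/2}\|\xi_E\|_{L_2(E)} \lesssim \|\xi\|_{V'(\omega_E)}$; the bounded overlap of the patches $\omega_K,\omega_E$ delivers the global estimate. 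Applying finally the frame lower bound from Lemma~\ref{lm:frame} replaces $\|\xi\|_{V'}^2$ by $c_\Psi^{-2}\sum_{\lambda \in \Theta}|\langle \xi, \psi_\lambda\rangle|^2$, and $J_0$ is fixed large enough that all implicit constants from the polynomial inverse estimates and the geometric sums combine into a universal constant independent of $J \geq J_0$.

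The main technical obstacle is this last step, because here $\xi$ is not the residual of any Galerkin solution but is given abstractly by~\eqref{eq:xirepresentation}, so the Verf\"urth argument has to be applied directly. The interior-bubble argument decouples cleanly since $\beta_K$ vanishes on $\partial K$ and the edge terms drop out when testing. For the edge bubble, the polynomial extension of $\xi_E$ to the two adjacent elements introduces an auxiliary volumetric contribution from the $\xi_K$-terms of those elements that must be absorbed into $2^{-\level(E)/2}\|\xi_E\|_{L_2(E)}$ via an inverse inequality; this is standard in residual-based a posteriori analysis but requires careful bookkeeping to ensure that all constants depend only on $d_1$, $d_2$, and $\initmesh$.
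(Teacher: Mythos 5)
Your argument follows the paper's proof in all essential respects: you localize $\langle\xi,\psi_\lambda\rangle$ to the $O(1)$ elements and edges met by $\supp\psi_\lambda$, extract the factor $2^{-2J}$ in front of the weighted residual quantity $\sum_K h_K^2\norm{\xi_K}_{L_2(K)}^2 + \sum_E h_E\norm{\xi_E}_{L_2(E)}^2$, control the latter by $\norm{\xi}_{V'}^2$ via Verf\"urth's estimate for functionals of the form~\eqref{eq:xirepresentation} (the paper simply cites \cite[Thm.~3.59]{V:13}), and close with the lower frame bound~\eqref{eq:framecondition}. Your per-level counting combined with $L_\infty$ inverse estimates for $\xi_K, \xi_E$ is a slightly different bookkeeping than the paper's direct Cauchy--Schwarz with $\norm{\psi_\lambda}_{L_2(K)}\lesssim 2^{-\abs{\lambda}}$ and $\norm{\psi_\lambda}_{L_2(E)}\lesssim 2^{-\abs{\lambda}/2}$ together with bounded overlap, but both yield the same intermediate estimate. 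One small slip worth correcting: for $\lambda\in\Theta_J(\cT)$, $\supp\psi_\lambda$ need not lie strictly inside a single element of $\cT$ --- indeed, if it did, the edge sums in~\eqref{eq:ThetaJdef} would be vacuous --- rather, $\supp\psi_\lambda$ meets only $O(1)$ elements and edges of $\cT$ at a much finer scale, which is what your subsequent Cauchy--Schwarz step actually requires.
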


\begin{proof}
  For $\lambda \in \Theta_{J}(\cT)$ we have, by uniform shape regularity and the definition of $\Theta_{J}(\cT)$, that $\psi_\lambda$ have support on a uniformly bounded number of elements $K \in \mathcal{T}$. Thus
\begin{equation}\label{eq:squareest}
    \Bigabs{  \sum_{K \in \mathcal{T}}  \bigabs{\langle \xi_K, \psi_\lambda \rangle}   }^2
     \lesssim \sum_{K \in \mathcal{T}} \bigabs{\langle \xi_K, \psi_\lambda \rangle}^2,
     \qquad
     \Bigabs{  \sum_{E \in \mathcal{E}}  \bigabs{\langle \xi_E, \psi_\lambda \rangle}   }^2
     \lesssim \sum_{E \in \mathcal{E}} \bigabs{\langle \xi_E, \psi_\lambda \rangle}^2
\end{equation}
with uniform constants.
Let
\[
   h_K = 2^{-\level(K)}, \quad h_E = 2^{-\level(E)}   \,.
\]
By~\eqref{eq:squareest} and using that by the normalization of $\psi_\lambda$, with $j = \abs{\lambda}$ we have that $\norm{\psi_\lambda}_{L^2(K)} \lesssim 2^{-j}$ and $\norm{\psi_\lambda}_{L^2(E)}\lesssim 2^{-\frac12 j}$.
Thus
   \begin{align*}
   \sum_{\lambda \in \Theta_{J}(\cT)} \bigabs{\langle \xi, \psi_\lambda\rangle }^2
     & \lesssim \sum_{K \in \mathcal{T}} \sum_{\lambda \in \Theta_{J}(\cT)} \bigabs{\langle \xi_K, \psi_\lambda \rangle}^2 +  \sum_{E \in \mathcal{E}} \sum_{\lambda \in \Theta_{J}(\cT)} \bigabs{\langle \xi_E, \psi_\lambda \rangle}^2    \\
     & \lesssim  \sum_{K \in \mathcal{T}} \sum_{j > \level(K) + J} 2^{-2j} \norm{\xi_K}_{L_2(K)}^2
        + \sum_{E \in \mathcal{E}} \sum_{j > \level(E) + 2J} 2^{- j} \norm{\xi_E}_{L_2(E)}^2 \\
   & \lesssim 2^{-2J} \sum_{K \in \mathcal{T}} h_K^2 \norm{\xi_K}_{L_2(K)}^2 
     + 2^{-2J} \sum_{E \in \mathcal{E}} h_E \norm{\xi_E}_{L_2(E)}^2 \,.
\end{align*}

Since for all $K \in \cT$ and $E \in \mathcal{E}$, the components $\xi_K$ and $\xi_E$ are polynomial with degrees bounded by $d_1$ and $d_2$, respectively, \cite[Thm.~3.59]{V:13} yields
\[
  \sum_{K \in \mathcal{T}} h_K^2 \norm{\xi_K}_{L_2(K)}^2 
  + \sum_{E \in \mathcal{E}} h_E \norm{\xi_E}_{L_2(E)}^2 \lesssim \norm{\xi}_{V'}^2
\]
with a constant depending only on the initial triangulation $\hat\cT_0$ and on $\max\{d_1,d_2\}$.
By~\eqref{eq:framecondition},
\[
   \norm{\xi}_{V'}^2
      \lesssim \sum_{\lambda \in \Theta}  \bigabs{\langle \xi, \psi_\lambda\rangle }^2
\]
with a further constant depending only on $\hat\cT_0$.
This concludes the proof.
\end{proof}

We use Lemma \ref{lmm:spatialrelerr} to estimate the $V'$-norms in \eqref{eq:res}: we first determine the relevant indices $\nu \in \cF$ in the summation on the right-hand side of \eqref{eq:res}. This is done by an operator compression in the parametric degrees of freedom that is independent of the spatial meshes. For each of these $\nu$, we evaluate the frame coefficients
\[
\br_\nu = (\br_{\nu,\lambda})_{\lambda \in \Theta_\nu}, \quad \br_{\nu,\lambda} = \biggl\langle \sum_{\nu' \in \cF} \sum_{\mu \in \cM_0(\nu,\nu')}  (\bM_\mu)_{\nu,\nu'} A_\mu v_{\nu'}, \psi_\lambda\biggr\rangle
\]
for suitable finite subsets $\cM_0(\nu,\nu')\subset \cM_0$ and for $\lambda \in \Theta_\nu \subset \Theta$. Here $\Theta_\nu$ is chosen according to Lemma \ref{lmm:spatialrelerr} such that 
\[
   \sum_{\lambda \notin \Theta_\nu} \br_{\nu,\lambda}^2 \leq \zeta^2  \sum_{\lambda\in \Theta}  \br_{\nu,\lambda}^2  
\]
with a sufficiently small relative error $\zeta \in (0,1)$.

The indices $(\nu,\lambda)$ corresponding to the largest values $\abs{\br_{\nu,\lambda}}$ are selected for refinement, based on a condition analogous to the D\"orfler criterion, by a tree thresholding procedure. Subsequently, for each $\nu$, the associated current mesh $\cT_\nu$ is refined such that all selected frame elements are contained in the resulting finite element space. 
A similar strategy has been outlined for the refinement of a single finite element mesh in \cite{HS:16}.
It is especially in our setting, with interactions between many different meshes in elliptic systems of PDEs, that this technique provides crucial flexibility compared to standard approaches of a posteriori error estimation.

\section{Adaptive solver}\label{sec:adaptive solver}

In this section, we describe the adaptive solver and analyze its constituent procedures. As noted above, the adaptive scheme is based on Galerkin discretizations on successively refined meshes. Here, the way in which the underlying residual error indicators are obtained and used in the mesh refinement differs from previous approaches to adaptive stochastic Galerkin finite element methods. We combine adaptive operator compression in the stochastic degrees of freedom with frame-based spatial refinement indicators. These are subsequently used in a tree-based selection of refinements that in each iteration of the adaptive scheme permits the application of multiple refinement steps within single mesh elements; this latter property is crucial in ensuring error reduction by a uniform factor.

The precise procedures for the individual steps in the adaptive scheme are described in this section. They are used by the adaptive solver given in \Cref{alg: adaptive method}, which is structured as follows:

\begin{enumerate}[{\bf(i)}]
\item Finite element frame coefficients of the current residual are approximated by the procedure \textsc{ResEstimate} (\Cref{alg: ResEstimate}), which in turn has two main steps:
\begin{enumerate}[(a)]
\item  First, the semidiscrete operator compression \textsc{Apply} given in \Cref{apply_semidiscr}, which selects spatial operator components in the elliptic system of PDEs \eqref{semidiscreteform} resulting from the stochastic Galerkin formulation. This step can be done analogously to the construction given in \cite{BV}.
\item  Second, finite element frame coefficients of residuals are assembled in \Cref{alg: ResEstimate} with the aid of \textsc{DualNormIndicators} (\Cref{dualnormindicators}) and \textsc{Sum} (\Cref{sumalg}). These schemes use localization of the residual contributions according to \Cref{lmm:spatialrelerr}.
\end{enumerate}
\item In the next step, a selection of frame coefficients is used to produce suitable spatial meshes. This is described in \Cref{sec:refinetriangulation} with the procedures \textsc{TreeApprox} and \textsc{Mesh}, replacing the standard mark-refine step in adaptive finite elements. 
\item A Galerkin solution for the new discretization is computed using \textsc{GalerkinSolve} (\Cref{alg:galerkin solve}), combining semidiscrete operator compression with standard optimal finite element preconditioning for each spatial component, as described in \Cref{sec: galerkin solver}.
\end{enumerate}

\subsection{Residual estimation}\label{sec:residual estimation}

As a first first step in the adaptive scheme, we require a semidiscrete adaptive compression of the operator $B\colon \mathcal V\to\mathcal V'$. For $\ell \in \mathbb N_0$ we define the truncated operators $B_\ell$ by
\begin{equation}\label{eq:Btrunc}
  \langle B_\ell v , w \rangle = \int_Y \int_D \Bigl(\theta_0 + \sum_{\substack{\mu \in \mathcal{M} \\ \abs{\mu} < \ell}} y_\mu\theta_\mu \Bigr) \nabla v(y) \cdot \nabla w(y)\sdd x \sdd\sigma(y) \quad \text{for all $v,w \in \cV$.}
\end{equation} 
We make use of two particular consequences of \Cref{ass:wavelettheta}: First, for the expansion functions $(\theta_\mu)_{\mu\in\cM}$ we have that there exists $C_1>0$ such that for all $\ell \geq 0$,
\begin{equation}\label{eq:multilevel1}
\#\{ \mu : |\mu| = \ell \} \leq C_1 2^{d\ell}.
\end{equation}
Second, there exists $C_2>0$ such that with the $\alpha>0$ in \Cref{ass:wavelettheta}(iii), for all $\ell \geq 0$,
\begin{equation}
\label{eq:multilevel2}
  \sum_{|\mu|=\ell} \abs{\theta_{\mu}} \leq C_2 2^{-\alpha  \ell} \quad \text{a.e.~in $D$.}
\end{equation}
As in \cite[Prop.~3.2]{BV}, we have the following approximation result.

\begin{prop}\label{prop:operator compression}
   With $C_2 >0$ as in~\eqref{eq:multilevel2}, for all $\ell \geq 0$,
  \[
   \norm{B - B_\ell}_{\cV\to\cV'} = \Bignorm{ \sum_{\abs{\mu} \geq \ell}  \bM_\mu \otimes A_\mu  }_{\ell_2(\cF)\otimes V \to \ell_2(\cF)\otimes V'} \leq \frac{C_2}{1 - 2^{-\alpha}} 2^{-\alpha \ell}.
  \]
\end{prop}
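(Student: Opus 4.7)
My plan is to prove the operator-norm bound by a direct deterministic estimate of the bilinear form defining $B-B_\ell$, and then to read off the tensor-product identity from the semidiscrete representation~\eqref{semidiscreteform}.

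First, I would write out, for $v,w\in\cV$,
\[
  \langle (B-B_\ell)v,w\rangle = \int_Y\int_D \Bigl(\sum_{|\mu|\geq \ell} y_\mu\theta_\mu(x)\Bigr)\nabla v(y)(x)\cdot \nabla w(y)(x)\sdd x\sdd\sigma(y),
\]
and for each fixed $y\in Y$ bound the coefficient pointwise. Since $y_\mu\in[-1,1]$ and the series for $a(y)$ converges absolutely and uniformly in $L_\infty(D)$,
\[
  \Bignorm{\sum_{|\mu|\geq \ell} y_\mu\theta_\mu}_{L_\infty(D)} \leq \esup_{x\in D}\sum_{|\mu|\geq \ell}|\theta_\mu(x)| = \esup_{x\in D}\sum_{j\geq \ell}\sum_{|\mu|=j}|\theta_\mu(x)|.
\]
Applying the multilevel bound~\eqref{eq:multilevel2} termwise and summing the geometric series yields
\[
  \Bignorm{\sum_{|\mu|\geq \ell} y_\mu\theta_\mu}_{L_\infty(D)} \leq C_2 \sum_{j\geq\ell}2^{-\alpha j} = \frac{C_2}{1-2^{-\alpha}}\,2^{-\alpha\ell},
\]
uniformly in $y$.

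Next I would apply the Cauchy--Schwarz inequality in $x$ and then in $y$ (with respect to $\sigma$) to get
\[
  |\langle (B-B_\ell)v,w\rangle| \leq \frac{C_2}{1-2^{-\alpha}}\,2^{-\alpha\ell}\int_Y \norm{v(y)}_V \norm{w(y)}_V\sdd\sigma(y) \leq \frac{C_2}{1-2^{-\alpha}}\,2^{-\alpha\ell}\,\norm{v}_\cV\norm{w}_\cV,
\]
which gives the desired bound on $\norm{B-B_\ell}_{\cV\to\cV'}$ by taking the supremum over $v,w$ of unit norm.

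Finally, to identify $B-B_\ell$ with $\sum_{|\mu|\geq\ell}\bM_\mu\otimes A_\mu$, I would use the isometric isomorphism $\cV\simeq \ell_2(\cF)\otimes V$ induced by the product Legendre basis $\{L_\nu\}_{\nu\in\cF}$. Expanding $v,w$ in this basis, the semidiscrete representation~\eqref{semidiscreteform} of $B$ shows that under this isomorphism $B$ is realized by $\sum_{\mu\in\cM_0}\bM_\mu\otimes A_\mu$ and $B_\ell$ by the partial sum over $\mu\in\cM_0$ with $|\mu|<\ell$; subtracting yields the claimed tensor-product form. There is no substantive obstacle here: the only point to verify is the uniform absolute convergence of the series of $\theta_\mu$ in $L_\infty(D)$ that justifies the interchange of summation and integration, and this is precisely what~\eqref{eq:multilevel2} together with $\alpha>0$ provides.
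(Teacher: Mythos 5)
Your proof is correct, and it is essentially the standard argument one would give (the paper itself only cites \cite[Prop.~3.2]{BV} without reproducing a proof). You use exactly the right ingredients: the pointwise bound $\bigl\|\sum_{|\mu|\geq\ell} y_\mu\theta_\mu\bigr\|_{L_\infty(D)} \leq \sum_{j\geq\ell}\sum_{|\mu|=j}|\theta_\mu| \leq C_2\sum_{j\geq\ell}2^{-\alpha j}$ uniformly in $y$, which is precisely what the levelwise summability~\eqref{eq:multilevel2} gives, followed by Cauchy--Schwarz in $x$ and then in $y$. The identification of $B-B_\ell$ with $\sum_{|\mu|\geq\ell}\bM_\mu\otimes A_\mu$ via the Legendre isomorphism $\cV\simeq\ell_2(\cF)\otimes V$ and the semidiscrete form~\eqref{semidiscreteform} is also correct, and your remark on the absolute $L_\infty$ convergence justifying the interchange of sum and integral is the right point to flag. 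No gaps.
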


We introduce the following notation: for $\nu \in \cF$ and $v \in \cV$,
\[
   [ v ]_\nu =  \int_Y v L_\nu(y)\,\sdd\sigma(y) ,
\]
and similarly, for $\xi \in \cV' = L_2(Y,V',\sigma)$, 
\begin{equation}\label{eq:functionalcoeffs}
   [  \xi ]_\nu = \int_Y \xi L_\nu(y)\,\sdd\sigma(y)   \,,
\end{equation}
so that
\[
   \langle \xi , v \rangle_{\cV',\cV} = \sum_{\nu \in \cF} \langle [\xi]_\nu, [v]_\nu\rangle_{V',V}  .
\]
Note that $\norm{v}_\cV = \norm{(\norm{[v]_\nu}_V)_{\nu \in \cF}  }_{\ell_2}$
and thus also $\norm{\xi}_{\cV'} = \norm{(\norm{[\xi]_\nu}_{V'})_{\nu \in \cF}  }_{\ell_2}$.

For sequences $\bv \in \ell_2(\cF)$ and $s>0$, we define the standard approximation spaces $\cA^s = \cA^s(\cF)$ with quasi-norm 
\begin{equation}
  \norm{\bv}_{\cA^s(\cF)} = \sup_{N \in \N_0} ( 1 + N)^s \min \{ \norm{ \bv - \bw }_{\ell_2(\cF)} \colon \#\supp \bw \leq N \}.
\end{equation}
Thus for $v \in L_2(Y, V, \sigma)$ with $\bignorm{ \bigl( \norm{[v]_\nu}_V \bigr)_{\nu \in \cF}  }_{\cA^s} < \infty$, for each $N \in \N$ there exists $F_N \subset \cF$ with $\# F_N \leq N$ such that
\[
  \biggnorm{ v - \sum_{\nu \in F_N} [v]_\nu L_\nu }_{L_2(Y,V,\sigma)} \leq (N+1)^{-s} \bignorm{ \bigl( \norm{[v]_\nu}_V \bigr)_{\nu \in \cF}  }_{\cA^s}.
\]

Similarly as in \cite{BV}, we construct a routine \textsc{Apply} in \Cref{apply_semidiscr} taking in a tolerance $\eta>0$ and $v\in \cV$ with finite stochastic support $\supp ([v]_\nu)_{\nu \in \cF}< \infty$ that produces a blockwise operator compression, adapted to $v$ and encoded by subsets of $\cM_0\times \supp ([v]_\nu)_{\nu \in \cF}$. These subsets of indices specify which truncation of $B$ as in~\eqref{eq:Btrunc} should be applied to which subset of Legendre coefficients of $v$. The following result is obtained by minor modifications of the proof of \cite[Prop.~4.8]{BV}.

\begin{algorithm}[t]
	\caption{$( M(\nu) )_{\nu \in F}, (F_i, \ell_i)_{i = 0}^I = \text{\textsc{Apply}}(v; \eta)$, for $N := \#\supp ([v]_\nu)_{\nu \in \cF} < \infty$, $\eta>0$.} \label{apply_semidiscr}
\begin{enumerate}[{\bf(i)}]
\item If $\norm{B}_{\cV\to\cV'} \norm{v}_{\cV} \leq \eta$, return the empty tuple with $F = \emptyset$;
otherwise, with $\bar I:=\ceil{\log_2 N}$, for $i = 0,\ldots, \bar I$, determine $F_i \subset \cF$ such that $\# F_i \leq 2^{i}$ and $P_{F_i} v = \sum_{\nu \in F_i} [v]_\nu L_\nu$ satisfies
\begin{equation*}
    \norm{v- P_{F_i} v }_{\cV} \leq C \min_{\# \tilde F \leq 2^i} \norm{ v - P_{\tilde F} v }_\cV  
\end{equation*}
with an absolute constant $C>0$.
Choose $I$ as the minimal integer such that
\[
  \delta = \norm{B}_{\cV\to\cV'} \norm{v - P_{ F_I} v }_\cV \leq \frac{\eta}{2}.
\]

\item With $d_0 =P_{F_0} v$, $d_i = (P_{F_i} - P_{F_{i-1}} )v$, $i=1,\ldots,\bar I$, and $N_i = \# F_i$, set 
\begin{equation*}
	\ell_i = \left\lceil\alpha^{-1} \log_2 \biggl( \frac{C_B}{\eta - \delta} \biggl( \frac{\norm{d_i}_\cV}{N_i} \biggr)^{\frac{\alpha}{\alpha+d}} \Bigl( \sum_{k=0}^I \norm{d_k}_\cV^{\frac{d}{\alpha+d}} N_k^{\frac{\alpha}{\alpha+d}} \Bigr) \biggr)\right\rceil,
	\quad i = 0, \ldots, I.
\end{equation*}

\item With $g$ defined by
  \begin{equation*}
    g = \sum_{i=0}^I  B_{\ell_i} d_i ,
  \end{equation*}
  for each $\nu \in F = \supp ([g]_\nu)_{\nu \in \cF}$, collect the sets $M(\nu) \subset \cM_0 \times \supp ([v]_\nu)_{\nu\in\cF}$ of minimal size such that
\begin{equation*}
 [g]_\nu = \sum_{(\mu,\nu') \in M(\nu)}  (\bM_{\mu})_{\nu,\nu'} \,A_{\mu} \, v_{\nu'},\quad \nu \in F,
\end{equation*}
and return $( M(\nu) )_{\nu \in F}$ as well as $(F_i, \ell_i)_{i = 0}^I$.
\end{enumerate}
\end{algorithm}

\begin{prop}\label{semidiscrapply}
	Let $s >0$ with $s< \frac{\alpha}{d}$, let $B$ be as in~\eqref{eq:Bdef}, let $v$ satisfy $\#\supp ([v]_\nu)_{\nu\in\cF} < \infty$, and let $\ell_i$, $i=0,\ldots,I$, and $g$ be as defined in \Cref{apply_semidiscr}. Then $\norm{B v - g}_{\cV'} \leq  \eta$, for $F = \supp ([g]_\nu)_{\nu \in \cF}$ we have
    \begin{equation}\label{eq:Fest}
    \# F \leq    \sum_{\nu \in F} \# M(\nu)
     \lesssim  \sum_{i=0}^I 2^{d \ell_i} \# F_i   \lesssim \eta^{-\frac1s} \bignorm{\bigl(\norm{[v]_\nu}_V \bigr)_{\nu\in\cF}}_{\cA^s}^{\frac1s},
    \end{equation}
    and 
    \begin{equation}\label{eq:elljest}
      \max_{i=0,\ldots,I} \ell_i \lesssim 1 + \abs{\log \eta} + \log \bignorm{\bigl(\norm{[v]_\nu}_V \bigr)_{\nu\in\cF}}_{\cA^s}.
    \end{equation}
  The constants in the inequalities depend on $C$ as in~\Cref{apply_semidiscr}, $C_B$, $d$, $\alpha$, $s$, and on $C_1$ from~\eqref{eq:multilevel1}.
\end{prop}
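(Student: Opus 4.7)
\medskip

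\noindent\textbf{Proof proposal.} The plan mirrors \cite[Prop.~4.8]{BV}, with the spatial discretization entering only through $\norm{B}_{\cV \to \cV'}$ and $C_B$, not through the structure of the argument. I would proceed in three stages, addressing in turn the three claims.

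First, for the residual bound $\norm{Bv-g}_{\cV'} \leq \eta$, I would split
\[
  B v - g \;=\; B(v - P_{F_I} v) \;+\; \sum_{i=0}^{I} (B - B_{\ell_i}) d_i,
\]
using $\sum_i d_i = P_{F_I} v$. The first term is bounded by $\norm{B}_{\cV\to\cV'} \norm{v - P_{F_I} v}_\cV = \delta \leq \eta/2$ by construction of $I$. For the second term, Proposition~\ref{prop:operator compression} gives $\norm{B-B_{\ell_i}}_{\cV\to\cV'} \lesssim 2^{-\alpha \ell_i}$, and the definition of $\ell_i$ in \Cref{apply_semidiscr}(ii) is engineered so that
\[
  2^{-\alpha \ell_i} \norm{d_i}_\cV \;\leq\; \frac{\eta - \delta}{C_B}\,\frac{\norm{d_i}_\cV^{d/(\alpha+d)} N_i^{\alpha/(\alpha+d)}}{\sum_k \norm{d_k}_\cV^{d/(\alpha+d)} N_k^{\alpha/(\alpha+d)}}.
\]
Summing in $i$ telescopes the right-hand side to $(\eta - \delta)/C_B$, and the total is then $\leq \eta/2 + (\eta-\delta) \leq \eta$ after absorbing the constant from Proposition~\ref{prop:operator compression} into $C_B$.

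Second, for the degree-of-freedom count, the inequality $\#F \leq \sum_\nu \#M(\nu)$ is immediate from the definitions. For the next step, observe that $B_{\ell_i} d_i$ involves only $\mu \in \cM_0$ with $|\mu|<\ell_i$, of which there are $\lesssim 2^{d\ell_i}$ by~\eqref{eq:multilevel1}; each $\bM_\mu$ is tridiagonal (shifting $\nu_\mu$ by $\pm 1$), so each $\nu' \in F_i$ contributes at most $\mathcal{O}(2^{d\ell_i})$ pairs to $\bigcup_\nu M(\nu)$, yielding $\sum_\nu \#M(\nu) \lesssim \sum_i 2^{d\ell_i}\#F_i$.

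Third, to convert this into the $\cA^s$-bound, I would use that by the near-best property of $P_{F_i}$ and $\#F_i \leq 2^i$,
\[
  \norm{d_i}_\cV \;\lesssim\; 2^{-(i-1)s}\,E,\qquad E := \bignorm{\bigl(\norm{[v]_\nu}_V\bigr)_{\nu\in\cF}}_{\cA^s},
\]
and $N_i = \#F_i \eqsim 2^i$. Substituting the explicit $\ell_i$ gives
\[
  \sum_{j=0}^I 2^{d\ell_j}\#F_j \;\lesssim\; (\eta - \delta)^{-d/\alpha}\,\Bigl( \sum_{k=0}^I \norm{d_k}_\cV^{d/(\alpha+d)} N_k^{\alpha/(\alpha+d)} \Bigr)^{(\alpha+d)/\alpha}.
\]
The inner sum is a geometric series in $k$ with ratio $2^{(\alpha - sd)/(\alpha+d)}$; since $s < \alpha/d$, the exponent is positive and the sum is controlled by its last term, of size $\lesssim E^{d/(\alpha+d)} N_I^{(\alpha - sd)/(\alpha+d)}$. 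Combining with the minimality of $I$, which forces $N_I \lesssim \eta^{-1/s} E^{1/s}$, a direct computation of exponents of $\eta$ and $E$ then yields~\eqref{eq:Fest}. The estimate~\eqref{eq:elljest} follows by taking logarithms in the definition of $\ell_i$ and inserting the same bounds.

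The main technical obstacle is the bookkeeping in the third step: verifying that the exponents $d/(\alpha+d)$ and $\alpha/(\alpha+d)$ in the definition of $\ell_i$ (which arise from solving a constrained optimization balancing operator-compression accuracy against degrees of freedom) conspire so that the geometric sum over $k$, raised to the power $(\alpha+d)/\alpha$ and combined with $N_I \lesssim \eta^{-1/s} E^{1/s}$, produces exactly the power $\eta^{-1/s} E^{1/s}$. The condition $s < \alpha/d$ enters precisely here, ensuring the sum over levels does not dominate the last term; otherwise an extra logarithmic factor or a worsened rate would appear.
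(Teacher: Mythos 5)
The paper itself does not spell out a proof but instead defers to \cite[Prop.~4.8]{BV}, so there is no direct text to compare against; your reconstruction recovers exactly the structure that \Cref{apply_semidiscr} is designed for and matches what the paper intends. The three-stage split (residual bound via the telescoping of $d_i$ and the calibrated $\ell_i$, the $\sum_i 2^{d\ell_i}\#F_i$ count from the bidiagonal structure of $\bM_\mu$ and \eqref{eq:multilevel1}, and the $\cA^s$ bookkeeping using near-best approximation plus minimality of $I$) is the expected argument, and your exponent computations for the geometric series and the final $\eta^{-1/s}E^{1/s}$ check out.

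Two small points worth fixing. First, the combination ``$\leq \eta/2 + (\eta-\delta)\leq\eta$'' is backwards: the algorithm guarantees $\delta\leq\eta/2$, so $\eta/2+(\eta-\delta)\geq\eta$ in general; you should instead bound the first term by $\delta$ itself, giving $\delta + (\eta-\delta)=\eta$. Second, $N_i=\#F_i\leq 2^i$ need not be $\eqsim 2^i$; this is harmless since $N_i$ only appears with positive exponents in the quantities you estimate from above, but the equivalence should not be asserted. Your aside about ``absorbing the constant from Proposition~\ref{prop:operator compression} into $C_B$'' correctly flags that the $C_B$ in step (ii) of \Cref{apply_semidiscr} must in effect dominate $C_2/(1-2^{-\alpha})$ for the constants in the proposition to come out as stated; this is a convention inherited from \cite{BV} and is the right thing to observe.
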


As a next step, for $g$ and $F$ as defined in~\Cref{apply_semidiscr}, for each $\nu \in F$ we need to evaluate
\begin{equation}\label{eq:semidiscrwdefM}
  [g]_\nu = \sum_{(\mu,\nu') \in M(\nu)}  (\bM_{\mu})_{\nu,\nu'} \,A_{\mu} \, v_{\nu'} \,\in\, V'.
 \end{equation}
We represent each $[g]_\nu$ as a piecewise polynomial on a (not necessarily conforming) triangulation as in \Cref{lmm:spatialrelerr}. 
To this end, we first compute each summand 
\[
  [g]_{\nu,\nu'} = (\bM_{\mu})_{\nu,\nu'} \,A_{\mu} \, v_{\nu'}
\]
in~\eqref{eq:semidiscrwdefM}.
Note that the index $\mu$ in the definition $[g]_{\nu,\nu'}$ is the unique index such that $\nu' = \nu \pm e_\mu$. 
For fixed $\nu'$, we can compute all $[g]_{\nu,\nu'}$ for $\nu\in F$ by traversing the mesh of $v_{\nu'}$ once. Let $\mathcal K_{\nu'}$ denote the elements in the mesh of $v_{\nu'}$ and let $\ell_{\nu'} = \max_{\nu'\in F_i}\ell_i$. \Cref{ass:wavelettheta}~(ii) guarantees that each element $K\in\mathcal K_{\nu'}$ is required for at most $C \ell_{\nu'} (2^{d\ell_{\nu'}-\level(K)})$ summands, with a uniform constant $C$. Hence, computing all $[g]_{\nu,\nu'}$ for $\nu\in F$ has a complexity of order~$O(\ell_{\nu'}\# \mathcal K_{\nu'} + 2^{d\ell_{\nu'}})$. However, for fixed $\nu$, the summands $[g]_{\nu,\nu'}$ are on different meshes.

To efficiently evaluate the sum, we use the natural tree structure on the triangles that are generated by newest vertex bisection, that is, the children of a triangle are the two triangles generated by bisection. The routine \textsc{Sum} in \Cref{alg: sum} yields a representation of $[g]_\nu$ on a joint grid, with a computational complexity that is linear in the sum of the sizes of the triangulations of $[g]_{\nu,\nu'}$.

\begin{remark}
  The step~\eqref{step:preadding} in \Cref{alg: sum} can be executed while computing $[g]_{\nu,\nu'}$ without explicitly storing $[g]_{\nu,\nu'}$.
\end{remark}
\begin{algorithm}[t]
	\caption{$ \bar\xi = \text{\textsc{Sum}}((\xi_\nu)_{\nu \in F})$ with finite $F\subset \cF$ and $\xi_\nu$ is of the form~\eqref{eq:xirepresentation} on a mesh $\cT_\nu$ with edges $\mathcal{E}_\nu$ for each $\nu \in F$.}\label{alg: sum}
  \begin{flushleft}
Let $\xi_{\nu,K}$ and $\xi_{\nu, E}$ be the polynomials of this representation for each $K \in \cT_\nu$ and $E \in \mathcal{E}_\nu$, respectively.
For each triangle $K$ let $E(K)$ denote the edge that is bisected by newest vertex bisection of $K$, and $K_i$ and $E_i(K)$ for $i = 1,2$ be the corresponding bisected elements and edges.
  \end{flushleft}
\begin{enumerate}[{\bf(i)}]
\item Initialize $\bar\xi$: Let $\bar\xi_K=0$ and $\bar\xi_E=0$ for every element and every edge;

\item \label{step:preadding} For every $K$ and $E$ in the elements and edges of $\mathcal T_\nu$ and every $\nu$, add the respective polynomial to $\bar\xi$:
\[
  \bar\xi_K \leftarrow \bar\xi_K + \xi_{\nu,K};\quad \bar\xi_{E} \leftarrow \bar\xi_E + \xi_{\nu,E}.
\]
\item \label{step:buildtree} Set $\mathfrak{T} = \emptyset$ and for each $K$ such that $\bar\xi_K\neq 0$ or $\bar\xi_{E(K)}\neq 0$, insert all ancestors of $K$ into $\mathfrak{T}$, so that $\mathfrak{T}$ becomes a tree with root elements $\mathfrak{R} \subseteq \hat\cT_0$;

\item While $\mathfrak{T}$ is not empty:

\noindent
\quad For all $K$ in $\mathfrak{R}$ add polynomials to the corresponding children in $\mathfrak{T}$:
  \begin{align*}
    \bar\xi_{K_i} &\leftarrow \bar\xi_{K_i} + \bar\xi_{K} \text{ for $i =1,2$} ;&
    \quad \bar\xi_{K} &\leftarrow 0 ;\\
    \quad \bar\xi_{E_i(K)} &\leftarrow \bar\xi_{E_i(K)} + \bar\xi_{E(K)}\text{ for $i =1,2$};
    & \quad \bar\xi_{E(K)} &\leftarrow 0;
  \end{align*}
  \quad remove $K$ from $\mathfrak{T}$ and $\mathfrak{R}$ and insert $K_1$, $K_2$ into $\mathfrak{R}$;
\item Return $\bar\xi$.
\end{enumerate}
\label{sumalg}
\end{algorithm}

\subsubsection{Approximate dual norm evaluation}
By \Cref{lm:frame}, we have the equivalent expression for the dual norm of the approximate residual
\[
  \norm{w}_{\cV'}^2 = \norm{(\norm{[g]_\nu}_{V'})_{\nu \in \cF}  }^2_{\ell_2}\eqsim
\sum_{\nu\in F} \sum_{\lambda\in \Theta}\abs{\langle [g]_\nu, \psi_\lambda \rangle}^2.
\]
With the help of \Cref{lmm:spatialrelerr} we can estimate the latter expression by evaluating the coefficients $\langle [g]_\nu, \psi_\lambda \rangle$ for all $\lambda\in \Theta\setminus\Theta_J(\cT)$ for some $J$. We now ensure that the costs for computing these coefficients are linear in the size of $\Theta\setminus\Theta_J(\cT)$.
For any frame element~$\psi_\lambda$, we define
$\mathcal K(\psi_\lambda) $ as the supporting triangles, $\mathcal E(\psi_\lambda)$ as the corresponding interior edges, and 
\[
  \mathcal R(\psi_\lambda) = \{\psi_\mu\colon \abs{\mu} = \abs{\lambda} + 1\text{ and } \supp(\psi_\mu)\subset \supp(\psi_\lambda)\}  
\]
as the set of frame elements in terms of which  $\psi_\lambda$ can be represented on the next higher level of refinement, that is, there are coefficients $h_{\lambda, mu}$ such that $\psi_\lambda = \sum_{\psi_{\mu}\in \mathcal R(\psi_\lambda)} h_{\lambda,\mu} \psi_{\mu}$.

\begin{lemma}\label{lm: Triangulation to frame elements}
    Let $J\in\N$ and let $\cT \geq \hat\cT_0$ be a triangulation with edges $\mathcal{E}$.
    Then for $\Theta_J(\cT)$ as in~\eqref{eq:ThetaJdef}, we have $\#(\Theta\setminus\Theta_{J}(\cT)) \lesssim 4^J\#\cT $. 
\end{lemma}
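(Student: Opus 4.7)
The plan is to decompose $\Theta\setminus\Theta_J(\cT) = A_1 \cup A_2$ according to which of the two conditions in~\eqref{eq:ThetaJdef} fails, and to bound each part by $O(4^J\#\cT)$. For $A_1$, a frame index $\lambda$ with $\abs{\lambda}=j$ lies in $A_1$ precisely when some $K\in\cT$ with $\meas_2(\supp\psi_\lambda\cap K)>0$ satisfies $\level(K)\geq j-J$; I will further split this into case~(a) $\level(K)\geq j$ and case~(b) $j-J\leq\level(K)<j$, and stratify by level $j$.

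Case~(b) is the dominant contribution. For a fixed $K\in\cT$ of level $\ell<j$, uniform shape regularity of the hierarchy $(\hat\cT_j)_{j\geq 0}$ implies that at most $O(4^{j-\ell})$ hat functions at level $j$ have support meeting $K$ with positive two-measure, that being the order of the number of vertices of $\hat\cT_j$ inside or adjacent to $K$. Summing this per-$K$ bound over $K\in\cT$ with $\level(K)=\ell$, then over $\ell$ and $j-\ell\in\{1,\ldots,J\}$, yields
\[
  \sum_\ell \#\{K\in\cT:\level(K)=\ell\}\sum_{k=1}^J O(4^k) \;\lesssim\; 4^J\,\#\cT .
\]
In case~(a), the witnessing $K$ is contained in some triangle $T\in\hat\cT_j$ that must in turn lie in $\supp\psi_\lambda$ (otherwise the overlap would have measure zero), so $T$ belongs to $\cT(j) := \{T\in\hat\cT_j : \exists\,K\in\cT \text{ with } K\subseteq T,\, \level(K)\geq j\}$. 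Since each $T\in\hat\cT_j$ supports at most three level-$j$ hat functions, the case~(a) contribution is bounded by $3\sum_j\abs{\cT(j)}$. Each $T\in\bigcup_j\cT(j)$ contains some $K\in\cT$ and is therefore an ancestor (possibly equal) of $K$ in the binary newest-vertex-bisection refinement tree rooted at $\hat\cT_0$; since $\cT$ is a cross-section of this binary tree with $\#\cT$ leaves, the total number of ancestors is at most $2\#\cT$. Hence case~(a) contributes $O(\#\cT)\leq O(4^J\#\cT)$, giving $\#A_1\lesssim 4^J\#\cT$.

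The bound on $A_2$ is analogous with edges replacing triangles. For $E\in\mathcal{E}$ of level $\ell<j$, the number of level-$j$ hats whose support meets $E$ with positive one-measure is $O(2^{j-\ell})$, reflecting the one-dimensional scaling of vertices along an edge rather than the two-dimensional scaling inside a triangle. The case-(b) analogue then gives $O(2^{2J})\cdot\#\mathcal{E} = O(4^J\#\cT)$ using $\#\mathcal{E}\lesssim\#\cT$, while case~(a) again contributes $O(\#\cT)$ via the same binary-tree ancestor bound applied to edge-supporting frame elements.

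The main obstacle is case~(a): a naive per-$K$ union bound, summing the hat–triangle overlap counts over all $K\in\cT$ with $\level(K)\geq j$, produces $\sum_K(\level(K)+1)$, which on strongly graded NVB meshes is not controlled by $4^J\#\cT$. The key is to recast the count as one over distinct uniform-mesh ancestors of $\cT$ rather than $K$–$\lambda$ incidences, and to use the cross-section property of $\cT$ in the binary NVB tree to bound these ancestors by $O(\#\cT)$.
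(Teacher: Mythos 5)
Your proof is correct and takes a genuinely different, more detailed route than the paper's. The paper argues by a direct per-element union bound: it asserts that for each $K\in\cT$ the set of $\lambda$ with $\meas_2(\supp\psi_\lambda\cap K)>0$ and $|\lambda|\le\level(K)+J$ has cardinality $\lesssim 4^J$, and similarly per edge, and then sums over $K$ and $E$. As you observe in your final paragraph, this per-$K$ count is really $O(\level(K)+4^J)$ once the levels $|\lambda|\le\level(K)$ are included (each such level contributes $O(1)$ overlapping hats, yielding $O(\level(K))$ in total), and $\sum_{K\in\cT}\level(K)$ is not controlled by $\#\cT$ on strongly graded NVB meshes. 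Your decomposition into case~(a) ($\level(K)\ge|\lambda|$) and case~(b) ($|\lambda|-J\le\level(K)<|\lambda|$), together with the reassignment in case~(a) from $(K,\lambda)$ incidences to distinct uniform-mesh ancestors $T\in\hat\cT_j$ of elements of $\cT$ and the binary-tree cross-section property giving $\sum_j\#\cT(j)\lesssim\#\cT$, is precisely what is needed to avoid this overcounting, and it gives extra detail at the one point where the paper's per-$K$ estimate is thin. One small point to tidy: when $\level(K)=j$ the element $K$ may have odd NVB generation $2j-1$, in which case $K$ is not contained in any single $T\in\hat\cT_j$ but is rather the union of two of its children in $\hat\cT_j$; here one should pick one child $T_i\in\hat\cT_j$ with $T_i\subseteq\supp\psi_\lambda$, and such children (being one generation below $\cT$) are also $O(\#\cT)$ in number, so the ancestor count still closes. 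With that adjustment and the analogous care in the edge case (where an edge $E$ at level $\geq j$ might lie on the boundary between two elements of $\hat\cT_j$), your argument is complete.
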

\begin{proof}
    By construction of $\psi_\lambda$, for $K\in \cT$ we have
    \[
    \# \bigl\{  \lambda\in \Theta \colon \meas_2(\supp \psi_\lambda \cap K)> 0 \text{ and } \abs{\lambda} \le \level(K) + J  \bigr\} \lesssim 4^J ,
    \]
    and for any edge $E\in \mathcal{E} $, 
    \[
    \# \bigl\{  \lambda\in \Theta \colon \meas_1(\supp \psi_\lambda \cap E)> 0 \text{ and } \abs{\lambda} \le \level(E) + 2J  \bigr\} \lesssim 4^J.
    \]
    Since
    \begin{multline*}
      \Theta\setminus\Theta_{J}(\cT) = 
      \bigl\{  \lambda \in\Theta \colon  (\exists K \in \cT\colon    \meas_2(\supp \psi_\lambda \cap K)> 0 \wedge \abs{\lambda} \le \level(K) + J )   \\
      \vee   (\exists E \in \mathcal{E}\colon    \meas_1(\supp \psi_\lambda \cap E)>0 \wedge \abs{\lambda} \le \level(E) + 2J ) 
      \bigr\} ,
    \end{multline*}
    we have $\#\bigl(\Theta\setminus\Theta_{J}(\cT)\bigr) \lesssim 4^J\#\cT $.
\end{proof}

\begin{lemma}\label{lm: BPXcoeffscosts}
  Let $J \in\N$. 
  For any not necessarily conforming triangulation $\cT $ with interior edges $\mathcal{E}$ and any $\xi \in V'$ of the form 
\eqref{eq:xirepresentation},
where $\xi_K \in \PP_{d_1}(K)$, $K \in \mathcal{T}$, and $\xi_E \in \PP_{d_2}(E)$, $E \in \mathcal{E}$,
the evaluation of the coefficients
  $\langle \xi, \psi_\lambda\rangle$ for all $\lambda \in \Theta\setminus\Theta_{J}(\cT)$
requires at most $C\#( \Theta\setminus\Theta_{J}(\cT) )$ basic operations, where $C$ depends only on $d_1, d_2$ and on $\max_{\lambda\in\Theta}\#\mathcal K(\psi_\lambda)$, $\max_{\lambda\in\Theta}\#\mathcal E(\psi_\lambda), $ and $ \max_{\lambda\in\Theta}\#\mathcal R(\psi_\lambda) $.

\end{lemma}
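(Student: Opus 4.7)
The plan is to compute the coefficients $\langle \xi, \psi_\lambda\rangle$ for $\lambda \in \Theta\setminus\Theta_J(\cT)$ by a single bottom-up traversal of the NVB tree of $\cT$, combining direct polynomial quadrature at the locally finest frame level with the two-scale refinement relation $\psi_\lambda = \sum_{\mu \in \mathcal R(\psi_\lambda)} h_{\lambda,\mu}\psi_\mu$ (which follows from the nestedness $V(\hat\cT_j) \subset V(\hat\cT_{j+1})$) to propagate to coarser levels. Throughout the sweep I maintain, for every $\lambda \in \Theta\setminus\Theta_J(\cT)$, a running partial sum $c_\lambda$ that accumulates the per-element contributions $\int_K \xi_K\psi_\lambda$ and per-edge contributions $\int_E \xi_E\psi_\lambda$ from~\eqref{eq:xirepresentation}.

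At each leaf $K \in \cT$ of level $\ell = \level(K)$, I first compute directly the $O(4^J)$ polynomial integrals $\int_K \xi_K\psi_\mu$ for $\mu$ with $|\mu| = \ell + J$ and $\supp\psi_\mu \cap K \neq \emptyset$; each such integral has fixed polynomial degree depending on $d_1$ and costs $O(1)$ time. I then sweep downward through levels $\ell + J - 1, \ldots, \ell$, using two-scale to obtain $\int_K \xi_K\psi_\lambda$ from already-computed level-$(|\lambda|+1)$ values in $O(\#\mathcal R(\psi_\lambda)) = O(1)$ per coefficient, adding each to $c_\lambda$ for those $\lambda \in \Theta\setminus\Theta_J(\cT)$. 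Contributions with $|\lambda| < \ell$ are handled by ascending the NVB tree: at each interior node $\tilde K$ of level $\ell'$, once its two children have been processed and have produced their stored level-$(\ell'+1)$ partial sums, I combine those sums on $\tilde K$ and apply two-scale once to produce $\int_{\tilde K}\xi\psi_\lambda$ for the $O(1)$ indices $\lambda$ at level $\ell'$ meeting $\tilde K$, adding to $c_\lambda$. Each leaf costs $O(4^J)$ and each interior node $O(1)$. Edges are treated analogously on the edge tree induced by NVB, starting from $O(4^J)$ one-dimensional polynomial integrals per edge at level $\level(E) + 2J$ (the one-dimensional measure of $E$ replaces $4^{2J}$ by $4^J$) and ascending via two-scale. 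Summing, the total work is $O\bigl(4^J(\#\cT + \#\mathcal E)\bigr) = O(4^J\#\cT) = O\bigl(\#(\Theta\setminus\Theta_J(\cT))\bigr)$ by \Cref{lm: Triangulation to frame elements}, with constants depending only on $d_1, d_2$ and on $\max_\lambda \#\mathcal K(\psi_\lambda), \max_\lambda \#\mathcal E(\psi_\lambda), \max_\lambda \#\mathcal R(\psi_\lambda)$.

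The main subtlety is the combinatorial accounting: one must verify that every pair $(K,\lambda)$ (respectively $(E,\lambda)$) with $\supp\psi_\lambda$ meeting $K$ (respectively $E$) is added to $c_\lambda$ exactly once, and in particular that the case where $\supp\psi_\lambda$ straddles mesh elements of very different levels — which in NVB can occur across a shared vertex — is correctly handled. Because the number of mesh elements of positive-area intersection with a given $\supp\psi_\lambda$ at coarse levels is bounded uniformly in $\hat\cT_0$, these straddling contributions can be appended to the relevant leaf step at $O(1)$ amortized cost, so that the double-counting identity $\sum_K\#\{\lambda \in \Theta\setminus\Theta_J(\cT) : \supp\psi_\lambda\cap K>0\} \lesssim \#(\Theta\setminus\Theta_J(\cT))$ holds and keeps the final bound within the claimed budget.
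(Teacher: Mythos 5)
Your bottom-up tree sweep is a reasonable algorithmic implementation of the same basic ingredients as the paper's proof — direct quadrature where $\xi$ is locally polynomial, and the two-scale relation $\psi_\lambda = \sum_{\mu\in\mathcal R(\psi_\lambda)} h_{\lambda,\mu}\psi_\mu$ to propagate to coarser levels — but the final step of your complexity accounting has a genuine gap. You conclude the argument with
\[
O\bigl(4^J(\#\cT + \#\mathcal E)\bigr) = O\bigl(\#(\Theta\setminus\Theta_J(\cT))\bigr) \quad \text{``by Lemma~\ref{lm: Triangulation to frame elements}''},
\]
but Lemma~\ref{lm: Triangulation to frame elements} gives the \emph{opposite} direction, namely $\#(\Theta\setminus\Theta_J(\cT)) \lesssim 4^J\#\cT$. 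What you actually need at this point is the reverse bound $4^J\#\cT \lesssim \#(\Theta\setminus\Theta_J(\cT))$, and moreover with an implied constant that is independent of $J$, since the statement requires $C$ to depend only on $d_1, d_2$ and $\max_\lambda\#\mathcal K(\psi_\lambda)$, $\max_\lambda\#\mathcal E(\psi_\lambda)$, $\max_\lambda\#\mathcal R(\psi_\lambda)$ — but not on $J$. That reverse inequality is plausible (each $K\in\cT$ contributes on the order of $4^J$ frame indices to $\Theta\setminus\Theta_J(\cT)$, with bounded overlap), but it is nowhere established in the paper, and the case $J=1$ in particular requires care since a level-$\ell$ triangle has no strictly interior vertices at level $\ell+1$. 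So your argument, as written, detours through a count $4^J\#\cT$ that would require an additional lemma to relate back to $\#(\Theta\setminus\Theta_J(\cT))$.

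The paper's proof avoids that detour entirely. It does not bound the work in terms of $\#\cT$ at all: instead, for each $\lambda$ it either does $O(1)$ quadrature (if $\xi$ is polynomial on all of $\mathcal K(\psi_\lambda)\cup\mathcal E(\psi_\lambda)$) or applies the two-scale relation at cost $O(\#\mathcal R(\psi_\lambda))$, and then observes the key structural fact that the ``not polynomial'' case forces $\lambda\in\Theta\setminus\Theta_0(\cT)\subseteq\Theta\setminus\Theta_J(\cT)$. Hence the set $\Theta_\xi$ of extra indices introduced by the recursion satisfies $\#\Theta_\xi \leq \max_\lambda\#\mathcal R(\psi_\lambda)\cdot\#(\Theta\setminus\Theta_J(\cT))$, and the total operation count is $\lesssim\#\Theta_\xi + \#(\Theta\setminus\Theta_J(\cT)) \lesssim \#(\Theta\setminus\Theta_J(\cT))$ directly, with no dependence on $J$ entering through the constants. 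To repair your proof you would either need to prove the $J$-uniform reverse inequality or, more cleanly, reorganize the accounting along the paper's lines so that the work is charged per frame index in $\Theta\setminus\Theta_J(\cT)$ rather than per triangle of $\cT$.
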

  
\begin{proof}
  First, if $\xi$ is polynomial on $K\in \mathcal K(\psi_\lambda)$ and $E\in \mathcal E(\psi_\lambda)$, then $\langle \xi, \psi_\lambda\rangle $ can be evaluated in $c\, (\#\mathcal K(\psi_\lambda)+\#\mathcal E(\psi_\lambda))$ basic operations using quadrature, where $c$ depends only on the polynomial degrees $d_1$ and $d_2$. Otherwise, we evaluate 
  \[
    \langle \xi, \psi_\lambda\rangle = \sum_{\psi_{\mu}\in \mathcal R(\psi_\lambda)} h_{\lambda,\mu} \langle \xi, \psi_{\mu}\rangle,
  \]
  which requires $\#\mathcal R(\psi_\lambda)$ operations. 
  It remains to estimate the size of 
  \[
    \Theta_{\xi} = \{\lambda' \in \Theta: \psi_{\lambda'}\in \mathcal R(\psi_\lambda) \text{ for $\lambda\in \Theta$  with $\xi$ not polynomial on $K\in\mathcal K(\psi_\lambda)$ or $E\in\mathcal E(\psi_\lambda)$}\}.
  \]
  Note that if $\xi$ is not polynomial on all  $K\in\mathcal K(\psi_\lambda)$ and $E\in \mathcal E(\psi_\lambda)$, then $\psi_\lambda\notin \Theta_{0}(\cT)$ by definition. Hence, 
  \[
    \Theta_{\xi}\subseteq \{\lambda' \in \Theta: \psi_{\lambda'}\in \mathcal R(\psi_\lambda) \text{ for some } \lambda\in \Theta\setminus\Theta_{0}(\cT) \}
  \]
  and $\#\Theta_{\xi}\leq  \#(\Theta\setminus\Theta_{\xi,0} )\max_{\lambda\in\Theta}\#\mathcal R(\psi_\lambda) \leq \#(\Theta\setminus\Theta_{J}(\cT))\max_{\lambda\in\Theta}\#\mathcal R(\psi_\lambda)$. The number of basic operations thus does not exceed
  \[
    c\, (\#\mathcal K(\psi_\lambda)+\#\mathcal E(\psi_\lambda)+\max_{\lambda\in\Theta}\#\mathcal R(\psi_\lambda))\,\left(\#\Theta_{\xi}+ \#\Theta\setminus\Theta_{J}(\cT)\right)\leq C \#\Theta\setminus\Theta_{J}(\cT). \qedhere
  \]
\end{proof}

A practical method implementing this result is given with the method \textsc{DualNormIndicators} in \Cref{alg: BPX-coeffs}.
Finally, this and the previous algorithms are used to estimate the residual similarly as in~\cite{BV} with \Cref{alg: ResEstimate} for prescribed tolerances of the approximations.

\begin{algorithm}[t]
	\caption{$(\Theta^+, (\langle \xi, \psi_\lambda\rangle)_{\lambda \in \Theta^+} ) = \text{\textsc{DualNormIndicators}}(\xi,J)$ with $\xi$ as in~\eqref{eq:xirepresentation}, $J\in \N$.}\label{alg: BPX-coeffs}
  \begin{flushleft}
    Let $\mathcal T$ and $\mathcal E$ be as in \Cref{lmm:spatialrelerr}, with $\cT$ the smallest triangulation such that~\eqref{eq:xirepresentation} holds for $\xi$.

    Initialize $L = 0$ and $\tilde\Theta_L = \{\lambda\colon \abs{\lambda} =0 \}$.
  \end{flushleft}
  \begin{enumerate}[{\bf(i)}]
      \item\label{algDualNormloop1} Set $L\leftarrow L +1$ and 
      
      $
      \displaystyle\tilde\Theta_{L}= \Big\{\lambda'\colon \psi_{\lambda'} \in \mathcal R(\psi_\lambda) \text{ for $\lambda\in \tilde\Theta_{L-1}$ $\wedge$ } $

        \quad\quad\quad\quad\text{$\displaystyle\Big( \,\big(\xi$ not polynomial on $K$ or $E$ for some $K\in \mathcal K(\psi_\lambda)$, $E\in \mathcal E(\psi_\lambda)\big)$} 

        \quad\quad\quad\quad$\displaystyle\text{ $\vee$ $\big(\supp \psi_{\lambda'}\cap K'\neq \emptyset $ and $ L \leq \level(K')+J$
        for some $K'\in \mathcal K\big)$}$

        \quad\quad\quad\quad$\displaystyle\text{ $\vee$ $\big(\supp \psi_{\lambda'}\cap E'\neq \emptyset $ and $ L \leq \level(E')+2 J$
        for some $E'\in \mathcal E\big)\,\Big)$}\Big\};
      $
      \item If $\tilde\Theta_L\neq \emptyset$ go to \textbf{(i)}, else go to \textbf{(iii)};
    \item For $j = L-1,L-2,\dots, 0$: 
    For all $\lambda\in\tilde\Theta_j$ evaluate $\langle \xi, \psi_\lambda\rangle$ via
        
    \[ \displaystyle   \langle \xi, \psi_\lambda\rangle = \sum_{\psi_{\lambda'}\in \mathcal R(\psi_\lambda)} r_{\lambda,\lambda'} \langle \xi, \psi_{\lambda'}\rangle\quad
    \text{if $\mathcal R(\psi_\lambda)\subset \tilde\Theta_{j+1}$,}\qquad \text{or by quadrature otherwise;}
        \]
    \item Set $\displaystyle\Theta^+ = \bigcup_{j=0}^{L-1} \tilde\Theta_j$ and return  $\displaystyle \bigl( \Theta^+,\, \bigl(\langle \xi, \psi_\lambda\rangle\bigr)_{\lambda \in \Theta^+}\bigr)$.
\end{enumerate}
\label{dualnormindicators}
\end{algorithm}

\begin{remark}
  For computational purposes one can avoid explicit treatment of edge terms: instead of applying integration by parts as in~\eqref{eq:intbyparts} to each term of the form $A_\mu v_\nu$, one can also use for each given $\psi_\lambda$ that
  \[
    \langle A_\mu v_\nu, \psi_\lambda\rangle = \int_D q_{\mu, \nu} \cdot \nabla \psi_\lambda\sdd x 
  \]
  with a piecewise polynomial vector field $q_{\mu,\nu}$. 
  One can thus apply the same algorithmic considerations to the components of these vector fields on the triangles and then form inner products with the gradients of the frame elements.
\end{remark}

\begin{algorithm}[htp]
  \caption{$((\Theta_\nu^+)_{\nu\in F^+},(\hat{\br}_\nu)_{\nu\in F^+},([r] _\nu)_{\nu\in F^+}, \eta, b) \!=\! \text{\textsc{ResEstimate}}(v; \zeta, \eta_0, \varepsilon)$, for $\#\supp ([v]_\nu)_\nu < \infty$, relative tolerance $\zeta>0$, initial tolerance $\eta_0$, target tolerance $\varepsilon$.}\label{alg: ResEstimate}

  \flushleft Set $\eta = \eta_0$; choose $\hat J$ such that $\zeta_{\hat J} := C 2^{-\hat J} < \zeta$.
  \begin{enumerate}[{\bf(i)}]
    \item Set $( M(\nu) )_{\nu \in F^+}, (F_i, \ell_i)_{i = 0}^I = \text{\textsc{Apply}}(v; \eta/C_\Psi)$ by \Cref{apply_semidiscr};
    
  \item For each $\nu' \in \supp([v]_\nu)_\nu$
  
  \noindent
  \quad Let $\ell_{\nu'} = \max_{\nu'\in F_i}\ell_i$;
  
  \noindent
  \quad For each $\mu$ such that $\abs{\mu}\leq \ell_{\nu'}$ and $\nu$ such that $(\mu, \nu') \in M(\nu)$\vspace{6pt} compute 

  \quad\quad $\displaystyle  [g]_{\nu,\nu'} = (\bM_{\mu})_{\nu,\nu'} A_\mu [v]_{\nu'}$
  
  \noindent
  \quad by traversing the mesh of $[v]_{\nu'}$ once;

  \item For each $\nu \in F^+$, use \Cref{alg: sum} to evaluate
  
  \quad\quad $\displaystyle [r] _\nu = \delta_{0,\nu} f - \sum_{(\mu,\nu')\in M(\nu)} [g]_{\nu,\nu'} = \text{\textsc{Sum}}\bigl(\delta_{0,\nu} f, (-[g]_{\nu,\nu'})_{(\mu,\nu')\in M(\nu)}\bigr)$;
   
   \item For each $\nu \in F^+$ set $(\Theta_\nu^+,\hat\br_\nu) = \text{\textsc{DualNormIndicators}}( [r]_\nu,\hat J)$ by \Cref{alg: BPX-coeffs};
    \item Let $b = \eta + (1+\frac{\zeta_{\hat J}}{\sqrt{1-\zeta_{\hat J}^2}})\norm{(\norm{\hat\br_\nu}_{\ell_2})_{\nu\in F^+}}$. If $\eta \leq \frac{\zeta - \zeta_{\hat J}}{ 1+\zeta} \norm{(\norm{\hat\br_\nu}_{\ell_2})_{\nu\in F^+}}$ or $b \leq \varepsilon$,

    \noindent
    \quad return $((\Theta_\nu^+)_{\nu\in F^+},(\hat\br_\nu)_{\nu\in F^+},([r] _\nu)_{\nu \in F^+}, \eta, b)$; 
    
    \noindent otherwise, set $\eta \leftarrow \eta/2$ and go to  {\bf(i)};
  \end{enumerate}
  \label{optscheme}
  \end{algorithm}

Note that with sequences $(\br_\nu)_{\nu\in \cF}$ such that $\br_\nu \in \ell_2(\Theta)$ for $\nu \in \cF$ and $\sum_{\nu \in \cF} \norm{\br_\nu}_{\ell_2}^2 < \infty$, we associate $\br = (\br_{\nu,\lambda})_{\nu\in\cF,\lambda\in\Theta} \in \ell_2(\cF\times\Theta)$. We write $\norm{\br} = \norm{\br}_{\ell_2}$, so that in particular
\[  \norm{\br} = \bignorm{\bigl(\norm{\br_\nu}_{\ell_2(\Theta)}\bigr)_{\nu \in\cF}}_{\ell_2(\cF)}. \]

  \begin{prop}\label{prop:ResEstimate}
    Let $((\Theta_\nu^+)_{\nu \in F^+}, (\hat\br_
    \nu)_{
      \nu\in F^+
    }, \eta, b)$ be the return values of Algorithm \ref{optscheme} and let
\begin{equation}\label{eq:Lambdaplus}
  \Lambda^+ = \bigl\{  (\nu, \lambda) \in \cF \times \Theta\colon \nu \in F^+, \lambda \in \Theta_\nu^+ \bigr\}\,.
\end{equation} 
    Set $\hat\br_\nu = 0$ for $\nu\notin F^+$.
    Furthermore, let $\bz = (\langle [Bv]_\nu, \psi_\lambda\rangle)_{\nu\in\cF,\lambda\in\Theta}$ and $\bbf = (\langle [f]_\nu, \psi_\lambda\rangle)_{\nu\in\cF,\lambda\in\Theta}$.
    Then $\norm{\bz - \bbf}\leq  b$ and either $b \leq \varepsilon$, or $\hat\br$ satisfies 
    \begin{equation}\label{eq:reserrbound} 
      \norm{\hat\br - (\bbf - \bz)} \leq \zeta \norm{\bbf - \bz},
    \end{equation}
    where we have
    $ \#\supp\hat\br \leq \#\Lambda^+ = \sum_{\nu \in F^+} \# \Theta_\nu^+$  and
    \begin{multline}\label{rsuppest}
     \#\Lambda^+ \lesssim 
     \#\cT(f)+
     (1+\abs{\log \eta} +{\log\norm{(\norm{[v]_\nu})_{\nu\in \cF}}_{\mathcal A^s}})^2 N(\mathbb{T})\\
     + 
     (1+\abs{\log \eta} +{\log\norm{(\norm{[v]_\nu})_{\nu\in \cF}}_{\mathcal A^s}})\eta^{-\frac{1}{s}}\norm{(\norm{[v]_\nu})_{\nu\in \cF}}_{\mathcal A^s}^{\frac{1}{s}}\,.
    \end{multline}
    The number of operations in \Cref{alg: ResEstimate} is bounded by a fixed multiple of
    \begin{multline}\label{ropest}
      (1+\log_2(\eta_0/\eta))\Big(
      \#F\log\#F+\#\cT(f)+
      (1+\abs{\log \eta} +{\log\norm{(\norm{[v]_\nu})_{\nu\in \cF}}_{\mathcal A^s}})^2 N(\mathbb{T}) \\
      + 
      (1+\abs{\log \eta} +{\log\norm{(\norm{[v]_\nu})_{\nu\in \cF}}_{\mathcal A^s}})\eta^{-\frac{1}{s}}\norm{(\norm{[v]_\nu})_{\nu\in \cF}}_{\mathcal A^s}^{\frac{1}{s}}\Big).
    \end{multline}
    \end{prop}
    \begin{proof}
      First, we show the residual error bounds similarly to \cite[Thm. 4.15]{BV}. As a consequence of \Cref{lm:frame} and \Cref{semidiscrapply}, we have $\norm{\bz- \bg }\leq C_{\Psi} \norm{Bv - g}_{\cV'}\leq \eta$, where with $[g]_\nu$ as in \eqref{eq:semidiscrwdefM},
      \[
        \bg_\nu = \bigl(\langle [g]_\nu, \psi_\lambda \rangle\bigr)_{\lambda\in\Theta}\quad\text{and }\quad
        \bigl(\langle [g]_\nu - \delta_{0,\nu} f, \psi_\lambda \rangle\bigr)_{\lambda\in\Theta_\nu} = \hat\br_\nu.
      \]
      By \Cref{lmm:spatialrelerr}, we can bound the truncation error by 
      \[
      \sum_{\nu\in F^+}\sum_{\lambda \notin \Theta_\nu}  \abs{\langle [g]_\nu - \delta_{0,\nu} f, \psi_\lambda \rangle}^2 
      \leq \zeta_{\hat J}^2
      \sum_{\nu\in F^+}\sum_{\lambda \in\Theta}  \abs{\langle [g]_\nu - \delta_{0,\nu} f, \psi_\lambda \rangle}^2
      .
      \]
      Thus
      \[
        \norm{\hat\br}^2 = 
       \sum_{\nu\in F^+}\sum_{\lambda \in \Theta_\nu}  \abs{\langle [g]_\nu - \delta_{0,\nu} f, \psi_\lambda \rangle}^2 
        \geq
        (1-\zeta_{\hat J}^2) 
        \sum_{\nu\in F^+}\sum_{\lambda \in\Theta}  \abs{\langle [g]_\nu - \delta_{0,\nu} f, \psi_\lambda \rangle}^2
        = (1-\zeta_{\hat J}^2) \norm{\bbf - \bg}^2.
      \]
      Together with $\norm{\bbf -\bz} \geq \norm{\bbf - \bg} -\eta$, this first results in
      \begin{equation}\label{eq:relative residual error}
        \norm{\hat\br -(\bbf-\bz)}\leq \norm{\bw-\bz} + \norm{\hat\br-(\bbf-\bg)}
        \leq \eta + \zeta_{\hat J} \norm{\bbf-\bg} 
        \leq 
        \eta + \frac{\zeta_{\hat J}}{\sqrt{1-\zeta_{\hat J}^2}}\norm{\hat\br}
      \end{equation}
      and hence 
      $\norm{\bbf-\bz} \leq \eta + (1+{\zeta_{\hat J}}(1-\zeta_{\hat J}^2)^{-\frac12})\norm{\hat\br} = b$.
      If additionally $\eta \leq \frac{\zeta-\zeta_{\hat J}}{1+\zeta}\norm{\hat\br}$, it results in
      \[
        \norm{\hat\br -(\bbf-\bz)}
        \leq
        \eta + \zeta_{\hat J} \norm{\bbf-\bg} 
        \leq (\zeta-\zeta_{\hat J})\norm{\hat\br} + \norm{\bbf-\bg}
        - \zeta\eta
        \leq
        \zeta(\norm{\bbf-\bg} -\eta)
        \leq \zeta \norm{\bbf-\bz}.
      \]  
      This shows the prescribed error accuracy.
      
      We now estimate $\#\Lambda^+ = \sum_{\nu \in F^+} \# \Theta_\nu^+$. Let $\tilde\cT_\nu\geq \hat\cT_0$ be a triangulation, such that $[r]_\nu$ is polynomial on its triangles and edges.  Then by \Cref{lm: Triangulation to frame elements} and \Cref{lm: BPXcoeffscosts}, we have
      \[
        \# \Theta_\nu^+ \leq c(\hat J) \#\tilde\cT_\nu. 
      \]
      We thus estimate $\sum_{\nu\in F^+} \#\tilde\cT_\nu$. To this end, we also denote by
      \[
        \tilde\cT_{\nu,\nu'} = \{T\in \tilde\cT_\nu\colon \supp[g]_{\nu,\nu'} \cap  T \neq \emptyset \}
      \]
      the supporting triangles of $[g]_{\nu,\nu'}.$
      Let $T$ be a triangle in the triangulation $\cT_{\nu'}$ of $[v]_{\nu'}$. If $\level(T) =  \ell+k$ for some $\ell\leq \ell_{\nu'}$, then all $\theta_\mu$ with $\abs\mu\leq \ell$ are polynomial on $T$ by \Cref{ass:pwpolytheta}. On the one hand, by \Cref{ass:wavelettheta}~(ii) we have
      \[
      \#\bigl\{(\tilde T,\nu)\colon \tilde T \in\hat\cT_{\nu,\nu'} \text{ for some $\nu = \nu'\pm e_\mu$, $\abs\mu\leq \ell$ and $ T\cap \tilde T\neq \emptyset$}\bigr\} \leq 2M\ell\leq 2M \ell_{\nu'}.
      \] 
      On the other hand, $
        \#\{\tilde T\in \cT_{\nu'\pm e_\mu,\nu'}\colon \level(\tilde T) = \abs\mu+k   \}\leq 2K $
      by \Cref{ass:pwpolytheta}. Hence, we have 
      \begin{equation}\label{eq:MeshEstimate}
        \sum_{\abs\mu\leq \ell_\nu}\left(\#\cT_{\nu'+e_\mu,\nu'} +\#\cT_{\nu'-e_\mu,\nu'}\right)
        \leq 
         2M\ell_{\nu'}\#\cT_{\nu'}+\sum_{\abs{\mu}\leq \ell_{\nu'}}2K
         \lesssim \#\cT_{\nu'}\ell_{\nu'} + 2^{d\ell_{\nu'}}
      \end{equation}
      and 
      \begin{align*}
        \#\Lambda^+ &= \sum_{\nu \in F^+} \# \Theta_\nu^+
        \lesssim
        \#\cT(f)+
        N(\mathbb{T})\, \max_{\nu'\in F}\ell_{\nu'}^2 
        + \sum_{j = 0}^J\#F_j  2^{d\ell_{j}}
        \max_{\nu'\in F}\ell_{\nu'}\,,
      \end{align*}
      where an additional factor of $\max_{\nu'\in F}\ell_{\nu'}$ occurs by completing the supports $\tilde\cT_{\nu,\nu'}$ to a (not necessarily conforming) triangulation. The estimate~\eqref{rsuppest} then follows with \Cref{semidiscrapply}.

      It remains to estimate the number of operations.
      As in \cite{BV}, the cost of \Cref{apply_semidiscr} is of order
      \[
        N(\mathbb{T}) +\#F\log{\#F} + 
        \eta^{-\frac{1}{s}} 
        \norm{(\norm{[v]_\nu})_{\nu\in \cF}}_{\mathcal A^s}^{\frac{1}{s}}.
      \]
      For step (ii) in \Cref{alg: ResEstimate}, we have to count how often each triangle in $\T$ is required, leading to an order of 
      \[
        (1+\abs(\log \eta + \log{\norm{(\norm{[v]_\nu})_{\nu\in \cF}}_{\mathcal A^s}})) N(\mathbb{T})
        + \eta^{-\frac{1}{s}} 
        \norm{(\norm{[v]_\nu})_{\nu\in \cF}}_{\mathcal A^s}^{\frac{1}{s}}.
      \]
      The application of \textsc{Sum} in \Cref{alg: sum} is linear in the size of triangulations. This results in the complexity 
      \begin{multline*}
        \#\cT(f)+
         (1+\abs{\log \eta + \log{\norm{(\norm{[v]_\nu})_{\nu\in \cF}}_{\mathcal A^s}}})^2 N(\mathbb{T}) \\
        + 
        \eta^{-\frac{1}{s}} 
        \norm{(\norm{[v]_\nu})_{\nu\in \cF}}_{\mathcal A^s}^{\frac{1}{s}}(1+\abs{\log \eta} + \log{\norm{(\norm{[v]_\nu})_{\nu\in \cF}}_{\mathcal A^s}}).        
      \end{multline*}
      The number of basic operations in \Cref{alg: BPX-coeffs} is linear in the size of $\Lambda^+$, which is again of the same order by \Cref{lm: BPXcoeffscosts}.
      Finally, we have at most $1+\log_2(\eta_0/\eta)$ outer loops, which leads to the bound~\eqref{ropest}.
    \end{proof}

\begin{remark}\label{rem:generalf}
For simplicity, as stated in the beginning of Section \ref{sec:frames}, so far we have assumed the parameter-independent source term $f$ to be piecewise polynomial on the initial triangulation $\initmesh$. However, the above procedure for computing residual indicators can easily be modified to accommodate more general $f$. 
There are several possible ways of achieving this by straightforward adaptation of standard strategies. To avoid additional technicalities, we restrict ourselves to briefly outlining two options:
 \begin{enumerate}[{\rm(a)}]
\item  A general approach is to use an additional routine $\text{\sc Approx}_f(\eta)$ that returns an approximation $\tilde f$ of $f$ satisfying $\norm{\tilde f -f}_{H^{-1}(D)}\leq \eta$ and is of the form as $\xi$ in \Cref{lmm:spatialrelerr} for some triangulation. 
 This approximation can be used in step~{\bf(iii)} of \Cref{alg: ResEstimate}.
 Then we can guarantee the result of \Cref{prop:ResEstimate} again with the help of \Cref{lmm:spatialrelerr}. A very similar strategy was used in \cite[Sec.~6]{S07}. The construction of $\text{\sc Approx}_f(\eta)$ depends on the particular features of $f$: if $f$ is in $L_2(D)$, such a routine can be obtained by $L_2$-projections onto a suitable space of piecewise polynomials; for proper functionals $f\in H^{-1}(D)$, such approximations are more problem-specific, as considered in detail in \cite{CDN:12}. 
\item We can also more directly use knowledge of the coefficients $\langle f, \psi_\lambda\rangle$, $\lambda \in \Theta$ and their decay to provide estimates \[ \Bigl(\sum_{\lambda\in \Theta\setminus \Theta^f_\eta} \abs{\langle f, \psi_\lambda \rangle}^2 \Bigr)^{\frac{1}{2}}\leq \eta \] for certain finite subsets $\Theta^f_\eta\subset \Theta$. These can be used as input in step~{\bf(iv)} of \Cref{alg: ResEstimate}.
\end{enumerate}
\end{remark}

\subsection{Refining the triangulations}\label{sec:refinetriangulation}
Our refinement strategy is based on selecting a subset of the residual indicators produced by \textsc{ResEstimate} (\Cref{alg: ResEstimate}) according to a bulk chasing criterion and subsequently refining the individual meshes in the approximation such that they resolve the selected frame elements. We assume that \textsc{ResEstimate} is performed for an approximation on the given conforming meshes $\mathbb{T} = (\cT_\nu)_{\nu \in F^0}$ with finite $F^0 \subset \cF$.

Let us first consider the selection of residual indicators. 
Recall that for all $\nu \in F^+ \subset \cF$, \textsc{ResEstimate} produces vectors of residual indicators $\hat\br_\nu$ corresponding to indices $\Theta_\nu^+$, with associated spatial-parametric index set $\Lambda^+ \subset \cF \times \Theta$ as in \eqref{eq:Lambdaplus}. Since the computational costs of the operations that we perform depend on tree structure in the frame index sets, we use the strategy based on tree coarsening described in \cite[Sec.~4.5]{BV} that preserves such structures in the selection.

To this end we first fix a tree structure on the frame elements $\psi_\lambda$, $\lambda \in \Theta$, and thus on the index set $\Theta$. 
This tree structure is determined by choosing a unique parent $\psi_{\lambda'}$ for each frame element $\psi_\lambda$ with $\abs{\lambda}>0$ such that $|\lambda| = |\lambda'| + 1$ and $\meas_2 (\supp \psi_{\lambda } \cap \supp \psi_{\lambda'} ) > 0$.
The tree structure on $\Theta$ induces a natural tree structure on $\Theta^\cF$ (and thus on $\cF\times \Theta$) with roots $(\{\psi_\lambda \colon |\lambda|=0\})_{\nu\in \cF}$. 

We use the procedure $\Lambda = \textsc{TreeApprox}(\Lambda^0, \Lambda^+, \hat\br, \eta)$ from \cite[Alg.~4.4]{BV} to obtain a coarsening $\Lambda$ of $\Lambda^+$ such that $\Lambda^0\subseteq\Lambda\subseteq \Lambda^+$, where $\Lambda^0, \Lambda$ are subsets with the chosen tree structure and $\eta>0$; see also \cite{BD:04, B:18}. Here we take $\Lambda^0 = \{ (\nu,\lambda) \in F^0\times \Theta \colon \psi_\lambda \in V(\cT_\nu)\}$.

For $\Lambda$ generated in this manner, $\textsc{TreeApprox}$ ensures
\[
  \norm{\hat\br|_{\Lambda}}^2 \geq \norm{\hat\br}^2 - \eta.
\]
For any prescribed $\omega_0\in(0,1)$, taking $\eta = (1-\omega_0^2)\norm{\br}^2$ we obtain the desired bulk chasing condition
\begin{equation}\label{eq:bulkchasing}
  \norm{\hat\br|_{\Lambda}} \geq \omega_0\norm{\hat\br} \,.
\end{equation}

\begin{remark}
  As a consequence of \cite[Cor.~4.20]{BV}, the resulting $\Lambda$ has the following optimality property: for each $\omega_1 \in [\omega_0, 1)$, there exists $\tilde C>0$ such that $\#(\Lambda\setminus \Lambda^0) \leq \tilde C \#( \tilde\Lambda\setminus\Lambda^0)$ for all tree subsets $\tilde\Lambda\supseteq \Lambda^0$ such that $\norm{\hat\br|_{\tilde\Lambda}} \geq \omega_1 \norm{\hat\br}$.
\end{remark}

For $\Lambda$ satisfying \eqref{eq:bulkchasing} selected in this manner, let $F = \{ \nu\in\cF\colon \exists \lambda \in\Theta\colon (\nu,\lambda)\in\Lambda\} \subseteq F^+$ and $\Theta_\nu = \{ \lambda\in \Theta\colon (\nu,\lambda) \in\Lambda\} \subseteq \Theta_\nu^+$ for $\nu\in F$, where $\Theta_\nu$ inherits the tree structure of $\Lambda$.
We now define a procedure 
\begin{equation}\label{eq:refmesh}
  (\tilde\cT_\nu)_{\nu\in F} = \msx{\Lambda}
\end{equation}
  that outputs the componentwise smallest sequence of meshes such that for each $\nu \in F$, $\tilde\cT_\nu$ is conforming and $\Span \{ \psi_\lambda\}_{\lambda \in \Theta_\nu}\subseteq V(\tilde\cT_\nu)$. Note that in a setting where $\Lambda^0 \subseteq \Lambda$, the meshes $\tilde\cT_\nu$ are refinements of the initial meshes $\cT_\nu$ that are given by the frame elements in $\Lambda^0$, that is, $(\cT_\nu)_{\nu\in F^0} =\msx{\Lambda^0}$.

\begin{prop}\label{prop:meshcomplexity}
The meshes $(\tilde\cT_\nu)_{\nu\in F}$ produced by $\msx{\Lambda}$ in \eqref{eq:refmesh} satisfy $\#\tilde\cT_\nu \lesssim \#\Theta_\nu$ for each $ \nu \in F$ and can be obtained using a number of operations proportional to $\#\Theta_\nu$, with constants depending only on $\hat\cT_0$.
\end{prop}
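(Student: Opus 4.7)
The plan is to reduce the claim to the Binev--Dahmen--DeVore complexity estimate for newest vertex bisection (NVB) applied to the admissibly labelled initial triangulation $\hat\cT_0$, exploiting the hierarchical tree structure imposed on $\Theta_\nu$.

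First, I would describe the correspondence between frame indices and local mesh refinements. Each $\lambda = (j,k) \in \Theta_\nu$ corresponds to a hat function $\psi_\lambda \in V(\hat\cT_j)$ at a vertex $v_\lambda$ with support contained in a uniformly bounded number of triangles of $\hat\cT_j$. For $\psi_\lambda \in V(\tilde\cT_\nu)$ to hold, the mesh $\tilde\cT_\nu$ must agree with $\hat\cT_j$ in a neighbourhood of $v_\lambda$, i.e., the $O(1)$ elements of $\hat\cT_j$ touching $v_\lambda$ must appear in $\tilde\cT_\nu$ (up to further refinement). Since $\hat\cT_j$ is obtained from $\hat\cT_{j-1}$ by two uniform NVB passes, realising $\psi_\lambda$ from a mesh already matching $\hat\cT_{j-1}$ near $v_\lambda$ requires only $O(1)$ elementary bisections, with a constant dictated by uniform shape regularity of $(\hat\cT_j)_{j\geq 0}$.

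Second, I would exploit the tree structure on $\Theta_\nu$ to organise the construction. Processing $\Theta_\nu$ in breadth-first (level) order starting from the roots $\{\lambda \colon |\lambda|=0\} \subseteq \hat\cT_0$, one can show inductively that when $\lambda$ with $|\lambda| > 0$ is visited, the parent $\lambda'$ has already been processed, so the current working mesh agrees locally with $\hat\cT_{|\lambda|-1}$ in a neighbourhood of $v_\lambda$. The $O(1)$ additional bisections required to introduce $v_\lambda$ and the surrounding $\hat\cT_{|\lambda|}$-structure can then be identified and marked. Across all $\lambda \in \Theta_\nu$, the total number of marked elementary bisections is therefore $O(\#\Theta_\nu)$.

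Third, I would invoke the BDD/Stevenson complexity estimate: for NVB started from an admissibly labelled $\hat\cT_0$, the size of the conforming completion of any sequence of marked bisections is bounded by $\#\hat\cT_0$ plus a constant times the number of marked elements. Combining this with the previous step gives
\[
  \#\tilde\cT_\nu \leq C\bigl(\#\hat\cT_0 + \#\Theta_\nu\bigr) \lesssim \#\Theta_\nu ,
\]
where the last step absorbs $\#\hat\cT_0$ into the constant (treating the trivial case $\Theta_\nu = \emptyset$ separately, in which case $\tilde\cT_\nu = \hat\cT_0$ is produced by $\msx{}$ applied to an empty input, and the claim is vacuous up to a constant). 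For the algorithmic cost, one uses standard amortised implementations of NVB with edge-to-triangle adjacency and a queue of elements pending completion: each elementary bisection and its completion work contributes $O(1)$ amortised work relative to the output size, and the BFS traversal of $\Theta_\nu$ visits each index once. Together this yields a total of $O(\#\Theta_\nu)$ operations.

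The main obstacle I expect is the bookkeeping needed to make precise the claim that processing $\lambda$ after its parent triggers only $O(1)$ bisections, and to ensure that the sequence of markings produced by this procedure is legitimate in the sense required by the BDD estimate (i.e., each marked element is actually a current element of the running mesh when it is bisected). Both points are essentially structural consequences of admissibility of the labelling and of the compatibility between the tree structure on $\Theta$ and the NVB refinement rule, but require a careful induction on the levels $|\lambda|$.
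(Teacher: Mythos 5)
Your proposal is correct and matches the paper's proof in essence: both organise a level-by-level marking of elements driven by the tree structure of $\Theta_\nu$, observe that each frame index triggers only $O(1)$ (double) bisections because of the bounded support of hat functions and the inclusion of ancestors, and then invoke the BDD/Stevenson complexity estimate (Theorem~\ref{thm:nvbisect}) to bound $\#\tilde\cT_\nu$ and the work. The only cosmetic difference is that you frame the construction as a per-index BFS with an explicit induction, while the paper directly defines the marking sets $\cM_k$ level-wise; these are the same argument reorganised.
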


The proof relies on the following bound on the complexity of conforming meshes created by newest vertex bisection, which is a consequence of \cite[Thm.~2.4]{BDD}, see also \cite[Thm.~3.2]{S07} and \cite[Lem.~2.3]{CKNS:08}.

\begin{theorem}\label{thm:nvbisect}
 Let $\cT_0 = \hat \cT_0$ and for $k = 1,\ldots,n$, let $\cT_k$ be defined as the smallest conforming refinement of $\cT_{k-1}$ such that the elements $\cM_{k-1} \subseteq \cT_{k-1}$ are bisected. Then with $C_0>0$ depending only on $\hat\cT_0$, 
 \[
    \# \cT_n - \#\cT_0 \leq C_0 \sum_{k=0}^{n-1} \# \cM_k. 
 \]
\end{theorem}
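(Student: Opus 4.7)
The plan is to fix $\nu \in F$ and build $\tilde\cT_\nu$ in an inductive, level-by-level fashion starting from $\hat\cT_0$, then invoke Theorem~\ref{thm:nvbisect} to control its size in terms of the number of explicitly marked elements. Concretely, let $J_\nu = \max\{|\lambda|\colon \lambda\in \Theta_\nu\}$, set $\cT^{(0)} = \hat\cT_0$, and for $j = 1,\ldots,J_\nu$ define $\cT^{(j)}$ to be the smallest conforming NVB-refinement of $\cT^{(j-1)}$ such that $\supp\psi_\lambda$ is resolved by $\cT^{(j)}$ (i.e.\ is a union of its elements) for every $\lambda\in\Theta_\nu$ with $|\lambda|\leq j$. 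Then $\tilde\cT_\nu := \cT^{(J_\nu)}$ coincides with the $\nu$-component of $\msx{\Lambda}$.

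The key step is to show that the total number of elements explicitly marked for bisection during the passage from $\cT^{(j-1)}$ to $\cT^{(j)}$ is bounded by a uniform multiple of $\#\{\lambda\in\Theta_\nu\colon |\lambda|=j\}$. Two observations drive this. First, because $\Theta_\nu$ inherits the tree structure introduced for $\Theta$, the parent index $\lambda'$ of each non-root $\lambda\in \Theta_\nu$ also lies in $\Theta_\nu$ and has been processed at an earlier stage, so the triangles of $\cT^{(j-1)}$ covering $\supp\psi_{\lambda'}\supseteq\supp\psi_\lambda$ are already present at level exactly $j-1$. Second, by uniform shape regularity of NVB-refinements of $\hat\cT_0$, $\supp\psi_\lambda$ meets only $O(1)$ triangles of $\hat\cT_j$, each obtainable from the corresponding level-$(j-1)$ triangles under $\supp\psi_{\lambda'}$ via two local NVB passes; marking those level-$(j-1)$ triangles therefore suffices, since any further bisections triggered by conforming closure are absorbed into the counting produced by Theorem~\ref{thm:nvbisect}. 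Summing over $\lambda\in\Theta_\nu$, with $\cM_k$ denoting the marked set at NVB-step $k$, yields $\sum_k \#\cM_k \lesssim \#\Theta_\nu$.

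Applying Theorem~\ref{thm:nvbisect} then gives $\#\tilde\cT_\nu - \#\hat\cT_0 \leq C_0\sum_k \#\cM_k \lesssim \#\Theta_\nu$. Since $\#\hat\cT_0$ is a fixed constant depending only on the initial triangulation and $\nu\in F$ implies $\#\Theta_\nu\geq 1$, the additive term is absorbed, yielding $\#\tilde\cT_\nu \lesssim \#\Theta_\nu$. The operational cost bound follows because a standard NVB data structure processes each new frame index in $O(1)$ work to locate the level-$(j-1)$ triangles to be bisected, while the conforming closure is performed in time proportional to the number of triangles actually added; the overall cost is thus a fixed multiple of $\#\tilde\cT_\nu = O(\#\Theta_\nu)$, with constants depending only on $\hat\cT_0$.

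The main obstacle is the locality claim underlying the second observation: one must verify that, under the admissible edge labelling assumed on $\hat\cT_0$, the two-pass local refinement used to advance from the already-present level-$(j-1)$ patch around $\supp\psi_{\lambda'}$ to the level-$j$ patch around $\supp\psi_\lambda$ really only marks $O(1)$ elements per frame index, rather than triggering a non-local cascade before Theorem~\ref{thm:nvbisect} takes over. This is standard for NVB but must be spelled out relative to the specific labelling convention introduced before Lemma~\ref{lm:frame} to ensure the hidden constants genuinely depend only on $\hat\cT_0$.
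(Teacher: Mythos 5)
Your proposal does not address the statement it is meant to prove. The statement is Theorem~\ref{thm:nvbisect}, i.e.\ the Binev--Dahmen--DeVore complexity estimate for newest vertex bisection with conforming closure. What you have written is instead a proof sketch of Proposition~\ref{prop:meshcomplexity} (the bound $\#\tilde\cT_\nu \lesssim \#\Theta_\nu$ for the meshes produced by $\msx{\Lambda}$), and your argument explicitly invokes Theorem~\ref{thm:nvbisect} as its main tool (``Applying Theorem~\ref{thm:nvbisect} then gives\dots''). Read as a proof of that theorem, the argument is therefore circular; read as a proof of the proposition, it is aimed at the wrong target.

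For comparison: the paper does not reprove Theorem~\ref{thm:nvbisect} either, but cites it as a consequence of \cite[Thm.~2.4]{BDD} (see also \cite[Thm.~3.2]{S07} and \cite[Lem.~2.3]{CKNS:08}). A genuine proof is substantially harder than anything in your sketch. The whole difficulty is that the conforming closure of a single marked bisection can trigger a recursive cascade whose length in any one step is \emph{not} uniformly bounded, so no per-step local count of the kind you describe (``only marks $O(1)$ elements per frame index'') can work; the bound is only valid in the aggregate over the entire refinement history. The standard argument assigns to each element created at any point of the history a fractional charge distributed over marked elements lying within distance $\lesssim 2^{-\ell}$ and within a bounded generation offset, and then verifies that (i) every created element is fully charged and (ii) every marked element absorbs only $O(1)$ total charge, with constants depending only on $\hat\cT_0$ and its admissible edge labelling. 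None of these ingredients appear in your proposal. If your intended target was in fact Proposition~\ref{prop:meshcomplexity}, then your level-by-level marking construction is essentially the paper's proof of that proposition; but as a proof of Theorem~\ref{thm:nvbisect} itself the proposal contains no argument at all.
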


\begin{proof}[Proof of Proposition~\ref{prop:meshcomplexity}]

For each $\nu \in F$, given a tree $\Theta_\nu$, we can apply Theorem~\ref{thm:nvbisect} to the construction of a suitable triangulation $\tilde \cT_\nu$ with $\# \tilde \cT_\nu \lesssim \# \Theta_\nu$
in the following way.
We may assume that $\Theta_\nu$ contains the roots $\{\lambda\colon \abs{\lambda} = 0\}$ as 
\[\#(\{\lambda : \text{$\lambda$ is an ancestor of some $\lambda'\in \Theta_\nu$ }\} \cup \Theta_\nu)
\lesssim
\#\Theta_\nu
\]
by the tree structure of $\Theta_\nu$.
Now let the sequence of triangulations in \Cref{thm:nvbisect} be defined by $\mathcal T_0 = \hat{\mathcal T}_0$
and 
\[
\mathcal M_k = \{T\in \mathcal T_k\colon T\cap \supp{\psi_\lambda} 
\text{ and } 
\abs{\lambda} = k+1
\}
\] 
with the slight adaptation that each marked triangle is bisected twice.
We get $\sum_{k=0}^{n-1} \# \cM_k \lesssim \Theta_\nu$ as each hat function on a uniform refinement has support on only a bounded number of triangles, and thus the result follows.
\end{proof}

\subsection{Solving Galerkin systems}\label{sec: galerkin solver}

\begin{algorithm}[t]
	\caption{$w = \text{\textsc{GalerkinSolve}}(\T,v, r, \ell, \varepsilon_0)$ given a family of triangulations $\T$, an approximate solution $v \in \cV({\T})$, an approximated residual $r \in \cV'$, accuracy parameter $\ell$ of the operator compression, and a target tolerance $\epsilon_0$.}
  \label{alg:galerkin solve}
\begin{enumerate}[{\bf(i)}]
\item Assemble linear operators $\bB_\ell$, $\bP$ on the coefficients of the nodal basis of $\T$ such that
      $\bu^\intercal \bB_\ell \bv  =  \langle B_\ell v, u\rangle$ for all $u,v\in \cV(\T)$, $\bP$ satisfies~\eqref{eq: spectral equivalence}, and $\br$ such that $\langle \bv, \br \rangle = \langle r, v\rangle$.
\item Use the preconditioned conjugated gradient method to find $q$ with nodal coefficients $\bq$ and $\langle \bP (\bB_\ell\bq-\br), (\bB_\ell\bq-\br) \rangle \leq \varepsilon^2_0$.
\item Return $w = v + q$;
\end{enumerate}
\end{algorithm}

As a starting point for solving the Galerkin systems, we recall the frame property \Cref{lm:frame} and assume an approximated residual $r$ computed by \Cref{optscheme} with tolerance $\eta$, which satisfies
$
  \norm{r- (f- Bv)}_{\cV'}\leq {\eta}/{C_\Psi}
$.
We use the preconditioned conjugated gradient~(PCG) method to find a correction $q$ of the current solution approximation such that for a $\rho>0$,
\begin{equation}\label{eq:updatebound}
  \|r-B_\ell q\|_{\cV(\T)'}\leq \frac{\rho}{c_\Psi}\norm{\hat \br}.
\end{equation}
Here, the discrete dual norm is given by
\[
  \norm{g}_{\cV({\T})'} = \sup_{v\in\cV({\T})} \frac{\langle g, v\rangle}{\norm{v}_{\cV}}.
\]
The condition \eqref{eq:updatebound} can be verified directly in the PCG method as follows. Let $\bA \in \R^{N\times N}$ with $N= \dim \cV(\T)$ be the coefficient matrix such that for nodal coordinate vectors $\bu, \bv \in \R^N$ of functions $u, v \in \cV(\T)$ we have $\langle \bA\bu, \bv \rangle = \int_Y\int_D \nabla u(x,y)\cdot\nabla v(x,y) \sdd x \sdd y$. We use the modification of the BPX preconditioner to general meshes generated by newest vertex bisection analyzed in \cite{CNX:12}; an alternative multigrid approach is presented in \cite{WZ:17}.
For this preconditioner with matrix representation $\bP$, there exist $c_p, C_P>0$ such that we have the spectral equivalence
\begin{equation}\label{eq: spectral equivalence}
  c_P \langle \bP^{-1} \bu, \bu\rangle \leq \langle\bA \bu , \bu\rangle
  \leq C_P \langle \bP^{-1} \bu, \bu\rangle \quad\text{for all $\bu \in \R^N$.}
\end{equation}
For $g$ in $\cV'$, we thus have
\[
  \norm{g}_{\cV({\T})'}^2 = \sup_{v\in\cV({\T})} \frac{\langle g, v\rangle ^2}{\norm{v}_{\cV}^2}
  =\sup_{\bv\in \R^N} \frac{\abs{\langle\bg, \bv\rangle}^2}{\langle \bA \bv, \bv\rangle}
  \leq \sup_{\bh\in \R^N}\frac{\abs{\langle\bg, \bP\bh\rangle}^2  }{c_P \langle\bh , \bP \bh\rangle }\leq \frac{1}{c_P} \langle \bP \bg, \bg\rangle,
\]
where we substituted $\bv = \bP \bh$, used the spectral equivalence and the Cauchy-Schwarz inequality.
Similarly, we have $\norm{g}_{\cV({\T})'}^2 \geq \frac{1}{C_P} \langle \bP\bg, \bg\rangle$.

\begin{prop}\label{prop: galerkin accuracy}
  Assume an initial approximation $v$ on a triangulation $\tilde\T\leq \T$, and with sufficiently small global tolerance $\varepsilon$, let 
  \[
  ((\Theta_\nu^+)_{\nu\in F^+},(\hat{\br}_\nu)_{\nu\in F^+},([r] _\nu)_{\nu\in F^+}, \eta, b) \!=\! \text{\textsc{ResEstimate}}(v; \zeta, \eta_0, \varepsilon)
  \]
  be the output of \Cref{alg: ResEstimate} and 
  $w = \text{\textsc{GalerkinSolve}}(\T,v, r, \ell,{c_P}^{-1/2}c_\Psi^{-1}\rho\norm{\hat\br})$ be the output of 
  \Cref{alg:galerkin solve}. Then the approximation $w$ of the Galerkin solution $u_\T$ satisfies the error bound
   \[
      \norm{u_\T - w}_B \leq \gamma(\zeta, \ell, \rho, \hat J)\norm{\hat\br},
    \]
    where
    \[
      \gamma(\zeta, \ell, \rho, \hat J) = \frac{1}{c_\Psi}\frac{1}{\sqrt{c_B}}\left( \frac{\zeta-\zeta_{\hat J}}{1+\zeta} + \frac{\zeta_{\hat J}}{\sqrt{1-\zeta_{\hat J}^2}} + \rho + \frac{C_2}{1 - 2^{-\alpha}} 2^{-\alpha \ell} \frac{1}{c_B}\left(\frac{1}{1-\zeta_{\hat J}} + \rho\right) \right).
    \]
\end{prop}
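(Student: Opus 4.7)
The plan is to combine a Galerkin-orthogonality identity on $\cV(\T)$ with the norm equivalence~\eqref{eq:dual norm equivalence} restricted to the subspace. Since $u_\T, w \in \cV(\T)$, one has $\norm{u_\T - w}_B \leq c_B^{-1/2} \norm{B(u_\T - w)}_{\cV(\T)'}$, and the Galerkin equation for $u_\T$ together with $w = v + q$ gives that the defect in $\cV(\T)'$ equals $f - Bv - Bq$. I would split this defect as
\[
  f - Bv - Bq \;=\; (f - Bv - r) \;+\; (r - B_\ell q) \;+\; (B_\ell - B)q,
\]
which matches the three sources of error entering the construction of $w$: the approximate residual produced by \textsc{ResEstimate}, the PCG iteration, and the semidiscrete operator compression.

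Each summand is then estimated separately. For the first, combining the Apply bound $\norm{Bv - g}_{\cV'} \leq \eta/C_\Psi$ from \Cref{semidiscrapply} with the frame-truncation estimate by the factor $\zeta_{\hat J}/\sqrt{1 - \zeta_{\hat J}^2}$ applied to $\norm{\hat\br}$ (as in~\eqref{eq:relative residual error} within the proof of \Cref{thm: ResEstimate}), and then invoking the stopping condition $\eta \leq \tfrac{\zeta - \zeta_{\hat J}}{1+\zeta}\norm{\hat\br}$, produces the first two summands of $\gamma$. The PCG stopping rule together with the spectral equivalence~\eqref{eq: spectral equivalence} yields $\norm{r - B_\ell q}_{\cV(\T)'} \leq (\rho/c_\Psi)\norm{\hat\br}$, giving the $\rho$ summand. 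For the third, \Cref{prop:operator compression} provides $\norm{(B - B_\ell)q}_{\cV'} \leq \tfrac{C_2}{1-2^{-\alpha}}\, 2^{-\alpha\ell}\,\norm{q}_\cV$, leaving only the bound on $\norm{q}_\cV$ to complete the estimate.

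The main obstacle is this last bound on $\norm{q}_\cV$, which must be expressed proportionally to $\norm{\hat\br}$ to match the form of $\gamma$. The key observation is that the truncated operator $B_\ell$ inherits uniform ellipticity with the same constant $c_B$, since $\theta_0 - \sum_{\abs{\mu}<\ell}\abs{\theta_\mu} \geq \theta_0 - \sum_{\mu\in\cM}\abs{\theta_\mu} \geq c_B$ by~\eqref{eq:uniformellipticity}. Hence, for $q \in \cV(\T)$, one has $\norm{q}_\cV \leq c_B^{-1}\norm{B_\ell q}_{\cV(\T)'} \leq c_B^{-1}\bigl(\norm{r}_{\cV'} + (\rho/c_\Psi)\norm{\hat\br}\bigr)$. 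Bounding $\norm{r}_{\cV'} \leq c_\Psi^{-1}\norm{\bbf - \bg}$ via the lower frame inequality from \Cref{lm:frame}, and using $\norm{\bbf - \bg} \leq (1-\zeta_{\hat J})^{-1}\norm{\hat\br}$ (a weakening of the bound $\norm{\hat\br}^2 \geq (1 - \zeta_{\hat J}^2)\norm{\bbf - \bg}^2$ established in the proof of \Cref{thm: ResEstimate}, via $\sqrt{1-\zeta_{\hat J}^2}\geq 1-\zeta_{\hat J}$), one obtains $\norm{q}_\cV \leq (c_B c_\Psi)^{-1}\bigl(\tfrac{1}{1-\zeta_{\hat J}} + \rho\bigr)\norm{\hat\br}$, which is precisely the prefactor in the fourth summand of $\gamma$. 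Collecting all four contributions and pulling the common factor $(c_\Psi\sqrt{c_B})^{-1}$ out front then yields $\norm{u_\T - w}_B \leq \gamma(\zeta,\ell,\rho,\hat J)\norm{\hat\br}$ in the claimed form.
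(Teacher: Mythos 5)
Your proposal is correct and follows essentially the same route as the paper's proof: the identical three-term decomposition of the defect, with the first term handled via \eqref{eq:relative residual error} and the stopping criterion, the second via the PCG tolerance, and the third via \Cref{prop:operator compression} together with coercivity of $B_\ell$. The one point you make more explicit than the paper --- that $B_\ell$ inherits the ellipticity constant $c_B$ because $\theta_0 - \sum_{|\mu|<\ell}|\theta_\mu| \geq c_B$, which underlies $c_B\norm{q}_{\cV} \leq \norm{B_\ell q}_{\cV(\T)'}$ --- is a genuine clarification of a step the paper uses without comment.
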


\begin{proof}
  We assume that $\varepsilon$ is sufficiently small so that \textsc{ResEstimate} does not terminate with the condition $b\leq \varepsilon$ in step \textbf{(v)}.
  Recall from~\eqref{eq:dual norm equivalence} that $\norm{v}_B \leq c_B^{-1/2}\norm{Bv}_{\cV(\T)'}$ for $v \in \cV(\T)$.
  We consider a decomposition of the total error for $w = v + q$,
  \begin{equation}\label{eq:galerrordecomp}
    \begin{aligned}
    \norm{ u_\mathbb{T} - w  }_B &\leq \frac{1}{\sqrt{c_B}} \norm{ f - B (v + q)}_{\mathcal{V}(\T)'}\\   
     &\leq \frac{1}{\sqrt{c_B}} \left( \norm{ r - (f - Bv)  }_{\mathcal{V}(\mathbb{T})'} 
       + \norm{ r - B_\ell q }_{\mathcal{V}(\mathbb{T})'} 
        + \norm{ (B_\ell - B) q  }_{\mathcal{V}(\T)'} \right).
    \end{aligned}
  \end{equation}
  For the first term on the right in \eqref{eq:galerrordecomp}, using~\eqref{eq:relative residual error} in the proof of \Cref{prop:ResEstimate} and \Cref{lm:frame}
  \begin{equation*}
    \norm{ r - (f - Bv)  }_{\mathcal{V}(\mathbb{T})'} \leq \frac{1}{c_\Psi} \left(\frac{\zeta-\zeta_{\hat J}}{1+\zeta} + \frac{\zeta_{\hat J}}{\sqrt{1-\zeta_{\hat J}^2}}\right)\norm{\hat\br}.
  \end{equation*}
  Note that \Cref{prop:ResEstimate} uses the frame norm while the estimate above is with respect to the discrete dual norm, which is obviously smaller than the full dual norm used in \Cref{lm:frame}.

  By assumption, the second term on the right in \eqref{eq:galerrordecomp} satisfies the bound \eqref{eq:updatebound}
  with relative solver error $\rho$ as a result of \Cref{alg:galerkin solve}.
  For the last term, first note that
  \begin{equation*}
    \norm{B_\ell q}_{\mathcal{V}(\mathbb{T})'} = \norm{r - (r - B_\ell q)}_{\mathcal{V}(\mathbb{T})'} \leq \norm{r}_{\cV(\mathbb T)'} + \frac{\rho}{c_\Psi}\norm{\hat\br}.
  \end{equation*}
  Moreover, again with \Cref{prop:ResEstimate} for the approximation of the full sequence of residual frame coefficients $\bar\br = ((\langle [r]_\nu, \psi_\lambda\rangle)_{\lambda\in\Theta})_{\nu\in\cF}$, we derive
  \begin{equation}\label{eq:r0 estimate}
    \norm{r}_{\cV(\mathbb T)'} \leq \frac{1}{c_\Psi}\norm{\bar\br} \leq \frac{1}{c_\Psi}\frac{1}{1-\zeta_{\hat J}}\norm{\hat\br}.
  \end{equation}
  By \Cref{prop:operator compression} and using that $c_B \norm{q}_{\cV(\mathbb T)} \leq \norm{B_\ell q}_{\mathcal{V}(\mathbb{T})'}$, we thus obtain 
  \begin{equation*}
    \norm{ (B_\ell - B) q }_{\mathcal{V}(\mathbb{T})'} \leq \frac{C_2}{1 - 2^{-\alpha}} 2^{-\alpha \ell} \norm{q}_{\cV(\mathbb T)}
    \leq \frac{C_2}{1 - 2^{-\alpha}} 2^{-\alpha \ell} \frac{1}{c_B}\left(\norm{r}_{\cV(\mathbb T)'} + \frac{\rho}{c_\Psi}\norm{\hat\br}\right).
  \end{equation*}
  With~\eqref{eq:r0 estimate} it follows that
  \begin{equation*}
    \norm{ (B_\ell - B) q }_{\mathcal{V}(\mathbb{T})'} \leq \frac{C_2}{1 - 2^{-\alpha}} 2^{-\alpha \ell} \frac{1}{c_\Psi}\frac{1}{c_B}\left(\frac{1}{1-\zeta_{\hat J}} + \rho\right)\norm{\hat\br}.
  \end{equation*}
  A combination of the preceding estimates yields the result.
\end{proof}

\begin{remark}\label{rem: complexity gal solve}
  The number of steps of the PCG method in \Cref{alg:galerkin solve} terminates after a uniform finite number of steps since we require a fixed relative accuracy.
  Hence, the computational complexity of the algorithm does not deteriorate when successive mesh refinements take place. 
  Furthermore, the operator compression parameter~$\ell$ is chosen as a fixed value, similarly depending only on a fixed relative tolerance. The complexity of one iteration of the CG method can thus be bounded by a fixed multiple of $N(\T^k)$, and consequently the same holds true for each call of \textsc{GalerkinSolve} in \Cref{alg: adaptive method}.
\end{remark}

\begin{algorithm}[t]
	\caption{Adaptive Galerkin Method}\label{alg: adaptive method}
  \flushleft  Set the parameters $\ell, \zeta, \omega_0$ and relative accuracy $\rho$, set initial $u^0= 0$ and formally $\norm{\hat\br^{-1}} = C_\Psi\norm{f}_{\cV'}$. Let $k = 0$;
  \begin{enumerate}[{\bf(i)}]
\item \label{alg:point 2}$(\Lambda^+,(\hat\br^k_\nu)_\nu,([r^k] _\nu)_\nu, \eta_k, b_k) = \text{\textsc{ResEstimate}}(u^k; \zeta, \frac{\zeta}{1+\zeta}\norm{\hat\br^{k-1}}, \varepsilon)$ by \Cref{alg: ResEstimate} 
\item If $b_k\leq \varepsilon$, return $u^k$.
\item $\Lambda^{k+1} = \textsc{TreeApprox}(\Lambda^k, \Lambda^+, \hat\br^k, (1-\omega_0^2)\norm{\hat\br^k}^2)$;
\item $\T^{k+1} = \msx{\Lambda^{k+1}}$;
\item $u^{k+1} = \text{\textsc{GalerkinSolve}}(\T^{k+1}, u^k , r^k, \ell, \frac1{\sqrt{c_P}}\frac{\rho}{c_\Psi}\norm{\hat\br^k})$ by \Cref{alg:galerkin solve}
\item $k\leftarrow k+1$ and go to~\eqref{alg:point 2};
\end{enumerate}
\end{algorithm}

\section{Convergence analysis}
\label{sec:convergence}

We now come to the main result of this work, where we show error reduction by a uniform factor in each step of the adaptive scheme \Cref{alg: adaptive method}.
This reduction factor depends on a number of parameters and constants from previous sections: on $c_B$ and $C_B$ from \eqref{eq:uniformellipticity} and \eqref{eq:CB}, respectively; on the frame bounds $c_\Psi, C_\Psi$ in \eqref{eq:framecondition}; and $\omega_0$ as in \eqref{eq:bulkchasing}.

Let $\T = (\cT_\nu )_{\nu \in F}$ with conforming $\cT_\nu \geq \hat\cT_0$ for all $\nu \in F$ with finite $F\subset \cF$ be given, and let $\mathbb{\tilde T} = (\tilde \cT_\nu )_{\nu \in\tilde F}$ be any refinement with $\tilde F \supseteq F$ and conforming $\tilde \cT_\nu \geq \cT_\nu$ for $\nu \in F$ as well as $\tilde \cT_\nu \geq \cT_0$ for $\nu \in F^+ \setminus F$. Note that for the exact solution $u \in \cV$, the Galerkin solution $u_{\mathbb{\tilde T}} \in \cV(\mathbb{\tilde T})$ and an arbitrary $w \in \cV(\mathbb{T})$, we then have the Galerkin orthogonality relation
\begin{equation}\label{eq:galerkinorth}
  \norm{ u - w  }^2_B = \norm{u - u_{\mathbb{\tilde T}}}^2_B + \norm{u_{\mathbb{\tilde T}} - w}^2_B \,. 
\end{equation}

In what follows, recalling notation \eqref{eq:functionalcoeffs}, let
\[
    \br(w) = \left( \bigl\langle [B w - f]_\nu , \psi_\lambda \bigr\rangle \right)_{\nu \in \cF,\lambda \in \Theta} \in \ell_2(\cF\times \Theta) \,.
\]
 We denote by $P_{\mathbb{T}}\colon \cV\to \cV$ the orthogonal projection in $\cV$ onto $\cV(\mathbb{T})$, which corresponds to the $V$-orthogonal projection onto $V(\cT_\nu)$ for each $\nu$.

\begin{lemma}\label{lmm:saturation}
   For $w \in \cV(\mathbb{T})$, let $\hat\br \in \ell_2(\cF\times\Theta)$ be such that
   \[
      \norm{ \br(w) - \hat\br }_{\ell_2} \leq \zeta \norm{ \br(w)}_{\ell_2}
   \]
   with $\zeta \in (0, \frac12)$, and let $\tilde\Lambda = (\tilde\Theta_\nu)_{\nu \in \cF}$ with $\tilde \Theta_\nu \subset \Theta$, $\nu \in\cF$, be chosen such that
   \[
         \norm{ \hat\br |_{\tilde\Lambda}}_{\ell_2} \geq \omega_0\norm{\hat\br}_{\ell_2} 
   \]
   with $\omega_0 \in (0,1]$, where $(1+\omega_0)\zeta < \omega_0$. 
   Let $\mathbb{\tilde T} = \msx{\tilde\Lambda}$,
   then with 
   $
    C_{B,\Psi} = {C_\Psi^2 C_B}/ (c_\Psi^2 c_B) \geq 1 ,
    $
    we have
   \[ 
     \norm{u - u_{\mathbb{\tilde T}}}_B \leq \biggl(  1  - \frac{\bigl( \omega_0 - (1 + \omega_0)\zeta \bigr)^2 }{ C_{B,\Psi} }  \biggr)^{\frac12} \norm{ u - w}_B.
    \] 
 \end{lemma}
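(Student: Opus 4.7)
The key identity is the Galerkin orthogonality
\begin{equation*}
  \norm{u - w}_B^2 = \norm{u - u_{\mathbb{\tilde T}}}_B^2 + \norm{u_{\mathbb{\tilde T}} - w}_B^2,
\end{equation*}
which is available because $w \in \cV(\mathbb{T}) \subseteq \cV(\mathbb{\tilde T})$. The plan is to establish the contraction through the complementary lower bound
\begin{equation*}
  \norm{u_{\mathbb{\tilde T}} - w}_B \geq C_{B,\Psi}^{-1/2}\bigl(\omega_0 - (1+\omega_0)\zeta\bigr)\norm{u - w}_B,
\end{equation*}
which I obtain by first propagating the bulk chasing condition from $\hat\br$ to the exact residual coefficient vector $\br(w)$, and then converting it into a $B$-norm estimate via the frame.

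A triangle-inequality argument transfers the bulk chasing condition to $\br(w)$: from $\norm{\br(w) - \hat\br} \leq \zeta\norm{\br(w)}$ one gets $\norm{\hat\br} \geq (1-\zeta)\norm{\br(w)}$ together with
\begin{equation*}
  \norm{\br(w)|_{\tilde\Lambda}} \geq \norm{\hat\br|_{\tilde\Lambda}} - \zeta\norm{\br(w)} \geq \omega_0\norm{\hat\br} - \zeta\norm{\br(w)} \geq \bigl(\omega_0 - (1+\omega_0)\zeta\bigr)\norm{\br(w)},
\end{equation*}
a quantity that is strictly positive by the hypothesis $(1+\omega_0)\zeta < \omega_0$.

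Next I lift this coefficient-level inequality into a bound on $\norm{u_{\mathbb{\tilde T}} - w}_B$ by testing the Galerkin equation against the explicit frame expansion
\begin{equation*}
  v = -\sum_{(\nu,\lambda)\in\tilde\Lambda}\br(w)_{\nu,\lambda}\,\psi_\lambda L_\nu.
\end{equation*}
Because $\mathbb{\tilde T} = \msx{\tilde\Lambda}$, each $\psi_\lambda$ with $(\nu,\lambda)\in\tilde\Lambda$ lies in $V(\tilde\cT_\nu)$, so $v \in \cV(\mathbb{\tilde T})$ is a legitimate Galerkin test function. The dual of the upper frame bound in~\eqref{eq:framecondition} gives the synthesis estimate $\bignorm{\sum_\lambda c_\lambda \psi_\lambda}_V \leq C_\Psi \norm{(c_\lambda)}_{\ell_2}$, so summing over $\nu \in \cF$ yields $\norm{v}_\cV \leq C_\Psi \norm{\br(w)|_{\tilde\Lambda}}$. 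Combining the Galerkin identity $\langle B u_{\mathbb{\tilde T}} - f, v\rangle = 0$, Cauchy--Schwarz for the $B$-inner product, and $\norm{v}_B \leq \sqrt{C_B}\norm{v}_\cV$ from~\eqref{eq:norm equivalence},
\begin{equation*}
  \norm{\br(w)|_{\tilde\Lambda}}^2 = \langle f - Bw, v\rangle = \langle B(u_{\mathbb{\tilde T}} - w), v\rangle \leq \sqrt{C_B}\,C_\Psi\,\norm{u_{\mathbb{\tilde T}} - w}_B\, \norm{\br(w)|_{\tilde\Lambda}},
\end{equation*}
which rearranges to $\norm{u_{\mathbb{\tilde T}} - w}_B \geq (\sqrt{C_B}\,C_\Psi)^{-1}\norm{\br(w)|_{\tilde\Lambda}}$.

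The lower frame bound combined with~\eqref{eq:dual norm equivalence} then gives $\norm{\br(w)} \geq c_\Psi\norm{Bw - f}_{\cV'} \geq c_\Psi\sqrt{c_B}\norm{u - w}_B$. Chaining this with the two previous estimates produces the desired lower bound on $\norm{u_{\mathbb{\tilde T}} - w}_B$, and substitution into the Galerkin orthogonality yields the contraction factor $(1 - (\omega_0 - (1+\omega_0)\zeta)^2/C_{B,\Psi})^{1/2}$. The only genuine subtlety lies in arranging the test function $v$ to lie in $\cV(\mathbb{\tilde T})$ while being simultaneously controlled in $\cV$-norm by the $\ell_2$-norm of the selected residual coefficients; these two requirements are precisely what $\msx{\tilde\Lambda}$ and the frame upper bound are designed to provide, so once these are in hand the remainder is a routine chaining of inequalities.
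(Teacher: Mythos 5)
Your proof is correct and takes essentially the same approach as the paper: transfer the bulk chasing condition to $\br(w)$, establish $\norm{u_{\mathbb{\tilde T}} - w}_B \geq (\sqrt{C_B}\,C_\Psi)^{-1}\norm{\br(w)|_{\tilde\Lambda}}$, and combine with Galerkin orthogonality. The only difference is in the middle step, where the paper inserts the dual projection $P'_{\mathbb{\tilde T}}$ and applies the frame analysis bound to $P'_{\mathbb{\tilde T}}(Bw-f)$, whereas you build the explicit test function $v$ and use the dual synthesis estimate $\norm{\sum_\lambda c_\lambda\psi_\lambda}_V\leq C_\Psi\norm{c}_{\ell_2}$ --- two formulations of the same duality, giving identical constants.
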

 
 \begin{proof}
   Using \eqref{eq:framecondition}, we obtain
   \begin{align*}
      \norm{w - u_{\mathbb{\tilde T}}}_B &
      \geq \frac{1}{\sqrt{C_B}} 
       \norm{B (w - u_{\mathbb{\tilde T}})}_{\cV'} 
        \geq \frac{1}{\sqrt{C_B}}  \norm{ P'_{\mathbb{\tilde T}} B (w - u_{\mathbb{\tilde T}}) }_{\cV'}
        = \frac{1}{\sqrt{C_B}}  \norm{ P'_{\mathbb{\tilde T}}  (B w - f) }_{\cV'} \\
        & \geq \frac{1}{\sqrt{C_B} C_\Psi} \Bigl( \sum_\nu \sum_{\lambda \in \Theta} \abs{\langle   [Bw - f]_\nu, P_{\mathbb{\tilde T}} \psi_\lambda \rangle }^2  \Bigr)^{\frac12} \\
        & = \frac{1}{\sqrt{C_B} C_\Psi}  \Bigl( \sum_\nu \sum_{\lambda \in \tilde S_\nu} \abs{\langle   [B w - f]_\nu, \psi_\lambda \rangle }^2  + \sum_\nu \sum_{\lambda \in \Theta \setminus \tilde S_\nu} \abs{\langle   [B w - f]_\nu, P_{\mathbb{\tilde T}} \psi_\lambda \rangle }^2  \Bigr)^{\frac12}  \\
        &\geq \frac{1}{\sqrt{C_B} C_\Psi}  \Bigl( \sum_\nu \sum_{\lambda \in \tilde S_\nu} \abs{\langle   [B w - f]_\nu, \psi_\lambda \rangle }^2  \Bigr)^{\frac12} = \frac{1}{C_\Psi \sqrt{C_B}} \norm{ \br(w) |_{\tilde\Lambda}}.
   \end{align*}        
   We now note that
   \[
      \norm{ \br(w) |_{\tilde\Lambda}} \geq \norm{ \hat\br|_{\tilde\Lambda}} - \norm{ \br(w) - \hat \br} \geq \omega_0\norm{ \hat\br } - \zeta \norm{\br(w)} 
      \geq \omega_0 \norm{ \br(w)} - (1 + \omega_0 ) \zeta \norm{\br(w)} ,
   \]     
   and thus
   \begin{align*}
      \norm{ \br(w) |_{\tilde\Lambda}}  & \geq \bigl( \omega_0 - (1 + \omega_0)\zeta \bigr) \norm{\br(w)}  =   \bigl( \omega_0 - (1 + \omega_0)\zeta \bigr) \Bigl( \sum_\nu \sum_{\lambda \in \Theta} \abs{\langle   [B w - f]_\nu, \psi_\lambda \rangle }^2  \Bigr)^{\frac12} \\
        &\geq   \bigl( \omega_0 - (1 + \omega_0)\zeta \bigr) c_\Psi  \norm{ B w - f}_{\cV'} \geq  \bigl( \omega_0 - (1 + \omega_0)\zeta \bigr) c_\Psi \sqrt{c_B} \norm{ w - u }_B \,.
   \end{align*}
   The statement thus follows with Galerkin orthogonality \eqref{eq:galerkinorth}.
 \end{proof}

\begin{lemma}\label{lm: error reduction}
   Let $\norm{ u_{\mathbb{\tilde T}} - \tilde w }_B \leq \gamma \norm{\hat\br}$. Then
\begin{equation}\label{eq: error reduction}
  \norm{u - \tilde w}_B \leq \delta \norm{ u - w}_B 
\end{equation}
  with
  \[
    \delta = \biggl(  1  - \frac{\bigl( \omega_0 - (1 + \omega_0)\zeta \bigr)^2 }{ C_{B,\Psi} } + \gamma^2 ( 1 + \zeta )^2 C_\Psi^2 C_B  \biggr)^{\frac12}
  \]
  and $C_{B,\Psi}$ from \Cref{lmm:saturation}.
\end{lemma}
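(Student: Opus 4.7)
The structure of the bound, with the two terms inside the square root, strongly suggests a Pythagorean decomposition of the total error $\norm{u-\tilde w}_B$ into a Galerkin-error part (to be controlled via Lemma~\ref{lmm:saturation}) and an algebraic-solver part (to be controlled via the hypothesis $\norm{u_{\tilde{\mathbb T}}-\tilde w}_B\le\gamma\norm{\hat\br}$). The plan is therefore to first separate these two contributions, then bound each using results already proved, and finally convert $\norm{\hat\br}$ to $\norm{u-w}_B$ so that both terms admit the common right-hand side $\norm{u-w}_B^2$.

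First, since $\tilde w\in\cV(\tilde{\mathbb T})$ (as it is produced by \textsc{GalerkinSolve} on $\tilde{\mathbb T}$) and $u_{\tilde{\mathbb T}}$ is the Galerkin projection of $u$ onto $\cV(\tilde{\mathbb T})$, Galerkin orthogonality in the $B$-inner product gives
\begin{equation*}
  \norm{u-\tilde w}_B^{2} \;=\; \norm{u-u_{\tilde{\mathbb T}}}_B^{2} \;+\; \norm{u_{\tilde{\mathbb T}}-\tilde w}_B^{2}.
\end{equation*}
For the first summand, Lemma~\ref{lmm:saturation} applies directly (with $w\in\cV(\mathbb T)\subset\cV(\tilde{\mathbb T})$) to yield
\begin{equation*}
  \norm{u-u_{\tilde{\mathbb T}}}_B^{2} \;\le\; \Bigl(1-\frac{(\omega_0-(1+\omega_0)\zeta)^{2}}{C_{B,\Psi}}\Bigr)\,\norm{u-w}_B^{2}.
\end{equation*}
The second summand is already bounded by $\gamma^{2}\norm{\hat\br}^{2}$ by hypothesis.

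It remains to estimate $\norm{\hat\br}$ in terms of $\norm{u-w}_B$. Using the assumed relative error $\norm{\br(w)-\hat\br}\le\zeta\norm{\br(w)}$, the triangle inequality gives $\norm{\hat\br}\le(1+\zeta)\norm{\br(w)}$. The upper frame bound in~\eqref{eq:framecondition} then gives $\norm{\br(w)}\le C_\Psi\,\norm{Bw-f}_{\cV'}$, and the upper norm equivalence in~\eqref{eq:dual norm equivalence} applied to $w-u$ yields
\begin{equation*}
  \norm{Bw-f}_{\cV'} \;=\; \norm{B(w-u)}_{\cV'} \;\le\; \sqrt{C_B}\,\norm{u-w}_B.
\end{equation*}
Combining these three estimates, $\norm{\hat\br}^{2}\le(1+\zeta)^{2}C_\Psi^{2}C_B\,\norm{u-w}_B^{2}$.

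Substituting both bounds into the Pythagorean identity produces exactly the right-hand side $\delta^{2}\norm{u-w}_B^{2}$, completing the proof. There is no real obstacle here; the only points that require a moment of care are verifying that the Galerkin orthogonality is legitimately available (which needs $\tilde w\in\cV(\tilde{\mathbb T})$ and $\cV(\mathbb T)\subseteq\cV(\tilde{\mathbb T})$, both of which hold by construction of $\tilde{\mathbb T}=\msx{\tilde\Lambda}$ with $\Lambda\supseteq\Lambda^0$) and, when invoking Lemma~\ref{lmm:saturation}, checking that its hypothesis $(1+\omega_0)\zeta<\omega_0$ is compatible with the parameter regime being used; otherwise the factor inside the square root in $\delta$ would not be positive, and one simply notes the estimate remains true but is non-informative in that case.
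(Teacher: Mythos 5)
Your proposal is correct and takes essentially the same approach as the paper: Galerkin orthogonality to split $\norm{u-\tilde w}_B^2$ into $\norm{u-u_{\tilde{\mathbb T}}}_B^2 + \norm{u_{\tilde{\mathbb T}}-\tilde w}_B^2$, Lemma~\ref{lmm:saturation} for the first term, the hypothesis for the second, and the chain $\norm{\hat\br}\le(1+\zeta)\norm{\br(w)}\le(1+\zeta)C_\Psi\norm{Bw-f}_{\cV'}\le(1+\zeta)C_\Psi\sqrt{C_B}\norm{u-w}_B$ to convert the algebraic-solver term. One small caveat on your closing remark: if $(1+\omega_0)\zeta\ge\omega_0$, the square $(\omega_0-(1+\omega_0)\zeta)^2$ is still positive, so the claimed $\delta$ would actually assert a nontrivial bound that the proof of Lemma~\ref{lmm:saturation} does not deliver; the statement should simply be read under the standing hypothesis $(1+\omega_0)\zeta<\omega_0$ of Lemma~\ref{lmm:saturation} rather than as "true but non-informative" outside it.
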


\begin{proof}
  Combining
  \[
    \gamma \norm{\hat\br} \leq \gamma ( 1 + \zeta ) \norm{\br(w)} 
     \leq  \gamma ( 1 + \zeta ) C_\Psi \norm{ Bw - f }_{\cV'}
     \leq \gamma ( 1 + \zeta ) C_\Psi \sqrt{C_B} \norm{ w - u }_{B}
  \]
  with the Galerkin orthogonality
  $
    \norm{ u - \tilde w}_B^2 = \norm{ u - u_{\mathbb{\tilde T}}}^2_B + \norm{ u_{\mathbb{\tilde T}} - \tilde w}_B^2$,
    the statement follows.
\end{proof}

Considering \Cref{prop: galerkin accuracy}, there are some requirements for the parameters that enter in $\gamma$ to ensure an error reduction in \Cref{lm: error reduction}. The conditions on $\zeta$ and $\omega_0$ read
\begin{equation}\label{eq:zeta condition}
  0<(C_{B,\Psi}+2)\zeta<1\qquad \text{with}\qquad C_{B,\Psi} = \frac{C_\Psi^2C_B }{c_\Psi^2 c_B}
\end{equation}
and
\begin{equation}\label{eq:omega0 condition}
  \omega_0 > (C_{B,\Psi}+1)\frac{\zeta}{1-\zeta}.
\end{equation}
Then it is possible to also find parameters $\ell$, $\rho$ and $\hat J$ to achieve the condition on $\gamma$
\begin{equation}\label{eq:gamma condition}
  \gamma =  \gamma(\zeta, \ell, \rho, \hat J) < \frac{\omega_0 - (1+\omega_0)\zeta}{(1+\zeta)\sqrt{C_{B,\Psi}C_B}C_\Psi}.
\end{equation}

With the help of \Cref{prop: galerkin accuracy} and \Cref{lm: error reduction} we can thus state the following result on error reduction in each step.

\begin{theorem}\label{thm:errorreduction}
  Assume conditions~\eqref{eq:zeta condition} and~\eqref{eq:omega0 condition}.
  Then there exist parameters $\ell,\rho,\hat J$ such that~\eqref{eq:gamma condition} is satisfied.
  Consequently, the adaptive algorithm achieves the error reduction
  \[
    \norm{u-u^{k+1}}_B \leq \delta \norm{u-u^{k}}_B,
  \]
  where $\delta$ from \Cref{lm: error reduction} is independent of $k$.
\end{theorem}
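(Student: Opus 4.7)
The plan is to split the argument into a parametric part (showing that the parameters $\ell,\rho,\hat J$ can in fact be chosen to drive $\gamma$ below the threshold) and an iteration part (deriving the one-step contraction from the per-step lemmas already proved).

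For the parametric part, I would work directly with the explicit expression
\[
  \gamma(\zeta,\ell,\rho,\hat J) = \frac{1}{c_\Psi\sqrt{c_B}}\left( \frac{\zeta-\zeta_{\hat J}}{1+\zeta} + \frac{\zeta_{\hat J}}{\sqrt{1-\zeta_{\hat J}^2}} + \rho + \frac{C_2\,2^{-\alpha\ell}}{(1-2^{-\alpha})\,c_B}\left(\frac{1}{1-\zeta_{\hat J}}+\rho\right)\right)
\]
supplied by \Cref{prop: galerkin accuracy}. Since $\zeta_{\hat J} = C\,2^{-\hat J}\to 0$ as $\hat J\to\infty$, the first two summands in the bracket tend to $\zeta/(1+\zeta)$ and $0$ respectively; then $\rho$ can be chosen arbitrarily small, and finally $\ell$ large enough so that the multilevel-truncation term becomes negligible. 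Conditions \eqref{eq:zeta condition} and \eqref{eq:omega0 condition} ensure that the target $\bigl(\omega_0-(1+\omega_0)\zeta\bigr)/\bigl((1+\zeta)\sqrt{C_{B,\Psi} C_B}\,C_\Psi\bigr)$ in \eqref{eq:gamma condition} is strictly positive, so a choice of $(\hat J,\rho,\ell)$ enforcing the inequality exists. This step is essentially algebraic and does not involve the algorithm itself.

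For the iteration part, I would fix a step $k\mapsto k+1$ of \Cref{alg: adaptive method}. By \Cref{thm: ResEstimate}, the call to \textsc{ResEstimate} in step~\eqref{alg:point 2} either terminates with $b_k\le\varepsilon$ (and the algorithm stops) or returns $\hat\br^k$ with $\norm{\hat\br^k-\br(u^k)}\le \zeta\norm{\br(u^k)}$. The subsequent call to \textsc{TreeApprox} enforces the bulk-chasing bound \eqref{eq:bulkchasing}, so with $\T^{k+1}=\msx{\Lambda^{k+1}}$ the hypotheses of \Cref{lmm:saturation} are met (the condition $(1+\omega_0)\zeta<\omega_0$ is immediate from \eqref{eq:zeta condition} and \eqref{eq:omega0 condition}), giving
\[
  \norm{u-u_{\T^{k+1}}}_B\le\Bigl(1-\tfrac{(\omega_0-(1+\omega_0)\zeta)^2}{C_{B,\Psi}}\Bigr)^{1/2}\norm{u-u^k}_B.
\]
The Galerkin solve in step~(v) then yields $u^{k+1}$ with $\norm{u_{\T^{k+1}}-u^{k+1}}_B\le \gamma\norm{\hat\br^k}$ by \Cref{prop: galerkin accuracy}. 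Feeding these two bounds into \Cref{lm: error reduction} produces the stated contraction with the $\delta$ defined there, which by the parametric argument above is strictly less than $1$ and independent of $k$.

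The main obstacle, as I see it, is not the algebraic parameter selection but the careful verification that every hypothesis of \Cref{lmm:saturation} and \Cref{prop: galerkin accuracy} is supplied at iteration $k$: in particular, that the initial tolerance $\eta_0=\tfrac{\zeta}{1+\zeta}\norm{\hat\br^{k-1}}$ passed to \textsc{ResEstimate} is such that the relative accuracy $\norm{\hat\br^k-\br(u^k)}\le\zeta\norm{\br(u^k)}$ holds (which uses $\norm{\hat\br^{k-1}}\gtrsim\norm{\br(u^k)}$ via the previous contraction), and that the preconditioned CG stopping criterion in \textsc{GalerkinSolve} translates through the spectral equivalence \eqref{eq: spectral equivalence} into a bound in the discrete dual norm $\norm{\cdot}_{\cV(\T^{k+1})'}$ compatible with the $\rho$-term entering $\gamma$. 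The reconciliation of the three norms in play (estimator, discrete dual, full dual), handled via the frame bounds $c_\Psi,C_\Psi$, is the bookkeeping that the proof must make explicit; once done, the contraction follows immediately.
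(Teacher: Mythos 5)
Your overall plan — tune $\hat J,\rho,\ell$ to drive $\gamma$ below the threshold in~\eqref{eq:gamma condition}, then invoke \Cref{lmm:saturation}, \Cref{prop: galerkin accuracy} and \Cref{lm: error reduction} to obtain the one-step contraction — matches the paper, and your iteration part is a correct (and somewhat more explicit) rendering of what the paper does by simply citing \Cref{lm: error reduction}. However, your parametric part has a real gap. You correctly observe that as $\hat J,\ell\to\infty$ and $\rho\to 0$, the bracket in $\gamma(\zeta,\ell,\rho,\hat J)$ tends to $\zeta/(1+\zeta)$, so the limiting value of $\gamma$ is the \emph{strictly positive} quantity
\[
  \gamma_\ast \;=\; \frac{1}{c_\Psi\sqrt{c_B}}\,\frac{\zeta}{1+\zeta}\,.
\]
To conclude that suitable $(\hat J,\rho,\ell)$ exist, you must show $\gamma_\ast$ lies \emph{strictly below} the target in~\eqref{eq:gamma condition}; it is not enough, as you assert, that the target $\bigl(\omega_0-(1+\omega_0)\zeta\bigr)/\bigl((1+\zeta)\sqrt{C_{B,\Psi}C_B}\,C_\Psi\bigr)$ is positive. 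The paper's proof closes exactly this gap: it writes out the inequality
\[
  \frac{1}{c_\Psi\sqrt{c_B}}\,\frac{\zeta}{1+\zeta} \;<\; \frac{\omega_0-(1+\omega_0)\zeta}{(1+\zeta)\sqrt{C_{B,\Psi}C_B}\,C_\Psi}
\]
and, after substituting $C_{B,\Psi}=C_\Psi^2 C_B/(c_\Psi^2 c_B)$ and $C^2 = C_\Psi^2 C_B C_{B,\Psi}/(c_\Psi^2 c_B)$, reduces it algebraically to $\zeta(C+1+\omega_0)<\omega_0$, which is precisely~\eqref{eq:omega0 condition}. This algebraic reduction is what shows the hypotheses of the theorem are exactly what is needed; without it your argument does not establish that the choice of parameters is possible under the stated conditions.
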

\begin{proof}
  We first note that \eqref{eq:gamma condition} and 
  \[
  1 - \frac{(\omega_0-(1+\omega_0)\zeta)^2}{C_{B,\Psi}} +  \gamma^2(1+\zeta)^2C_\Psi^2C_B < 1,
  \]
  are equivalent and thus error reduction is achieved due to \eqref{eq: error reduction} in \Cref{lm: error reduction}.
  The terms $2^{-\alpha\ell}$, $\zeta_{\hat J}$ and $\rho$ appearing in $\gamma(\zeta, \ell, \rho, \hat J)$ from \Cref{prop: galerkin accuracy} can be made arbitrarily small for sufficiently large $\hat J$ and $\ell$ and sufficiently small $\rho$. Thus the condition~\eqref{eq:zeta condition} is achievable whenever 
  \[
    \frac{1}{c_\Psi}\frac{1}{\sqrt{c_B}}\left( \frac{\zeta}{1+\zeta} \right) < \frac{\omega_0 - (1+\omega_0)\zeta}{(1+\zeta)\sqrt{C_{B,\Psi}C_B}C_\Psi}, 
  \]
  which directly leads to~\eqref{eq:omega0 condition}, and noting that $\omega_0<1$, the condition ~\eqref{eq:zeta condition} for $\zeta$ follows as well.
\end{proof}

\begin{remark}\label{rem:compComplexity}
  Concerning the computational complexity of the method, note that all substeps of the adaptive method \Cref{alg: adaptive method} have either linear costs up to logarithmic factors in $N(\T)$ or in the target accuracy rate $\eta^{-\frac{1}{s}}$, as seen notably for the most expensive step \textsc{ResEstimate} in \Cref{prop:ResEstimate}. The complexity of \textsc{GalerkinSolve} is linear in $N(\T)$ and only deteriorates for larger $\ell$, as noted in \Cref{rem: complexity gal solve}. For quasi-optimal computational complexity of the adaptive method, it thus only remains to show that the number of degrees of freedom in the approximation that is added in each step of the iterative method remains quasi-optimal.
\end{remark}
\begin{remark}\label{rem:KLgeneralization}
The convergence estimate of \Cref{thm:errorreduction} can be shown in the same manner without requiring the functions $\theta_\mu$ in the expansion of the diffusion cofficient to have wavelet-type multilevel localization properties.
For enforcing the uniform error reduction based on our construction, only piecewise polynomial structure (or approximation) on \emph{some} uniformly refined mesh of each function $\theta_\mu$ in the expansion of the diffusion coefficient is required, which also covers KL-type expansions. 
Specifically, as noted in \Cref{rem:MultilevelStructure}\eqref{rem:MultiLevelErrorReduction}, \Cref{ass:pwpolytheta} can be replaced by only requiring that for a fixed $m$, for each $\mu \in \cM$ there exists $j(\mu)$ such that $\theta_\mu \in \PP_m(\hat\cT_{j(\mu)})$. 
The semidiscrete operator compression in \Cref{prop:operator compression} utilizes the multilevel structure of $(\theta_\mu)_{\mu \in \cM}$, but a similar compression for KL expansions using summability of the norms $\norm{\theta_{\mu}}_{L_\infty}$ was shown in \cite{Gittelson:13}; note, however, that this generally leads to suboptimal complexity.
The required level of additional refinements in \Cref{lmm:spatialrelerr} is still bounded uniformly in terms of $m$, independently of $j(\mu)$. 
The error bound of \Cref{prop:ResEstimate} can again be shown in the same manner using $j(\mu)$. Thus, the convergence result \Cref{thm:errorreduction} follows.
However, our method is designed to exploit the stronger \Cref{ass:pwpolytheta} to obtain the computational complexity bounds summarized in \Cref{rem:compComplexity}.
\end{remark}

\section{Numerical Experiments}
\label{sec:experiments}

The adaptive Galerkin method \Cref{alg: adaptive method} was implemented for spatial dimensions $d=1$ and $d=2$ using the Julia programming language, version 1.9.2. The numerical experiments were performed on a Dell PowerEdge R725 workstation with AMD EPYC 7742 64-core processor. For the code to reproduce the tests, see \cite{ExampleCode}.

\begin{figure}
  \centering
  \begin{minipage}[c]{0.62\textwidth}
    \includegraphics[width  = \linewidth]{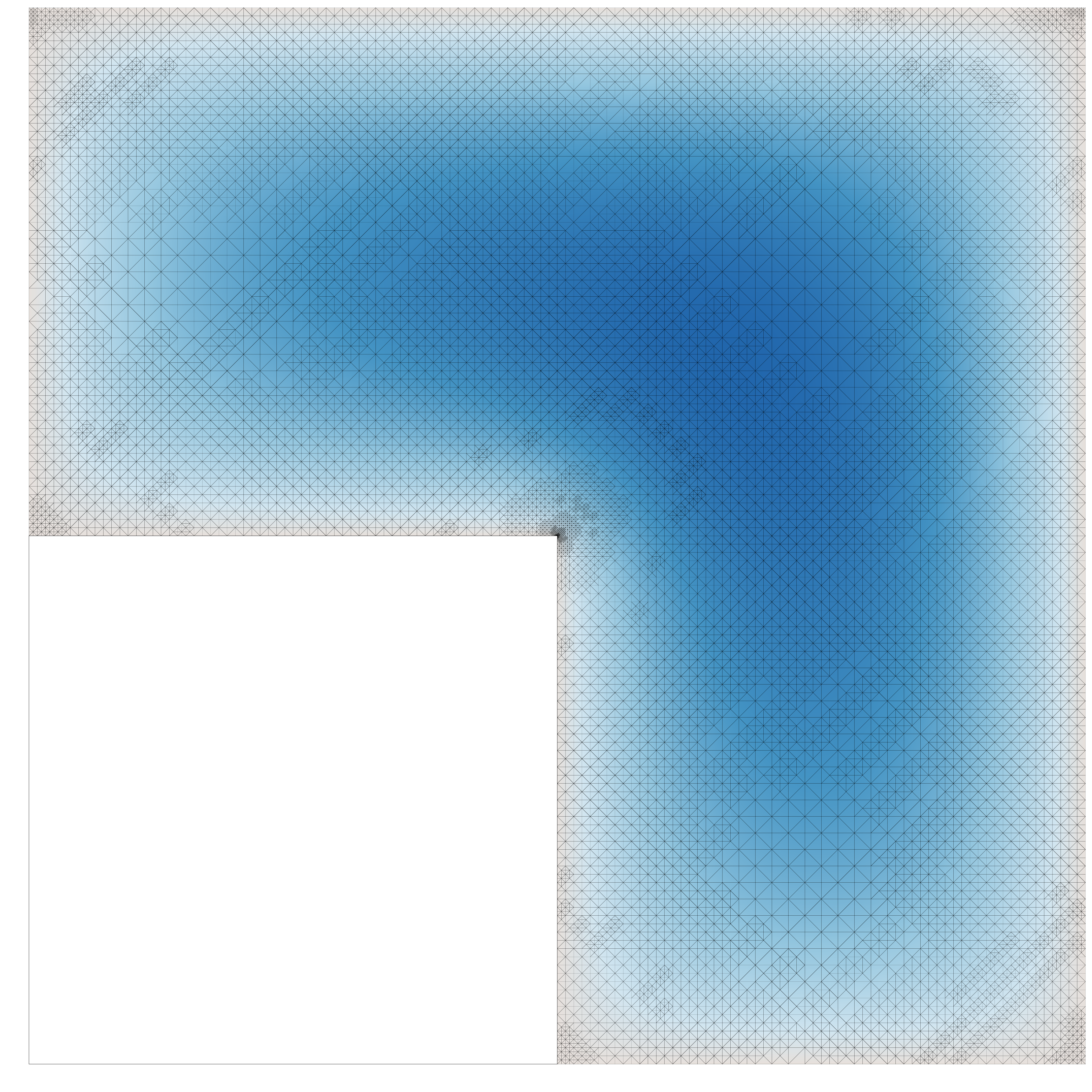}
  \end{minipage}
    \begin{minipage}[c]{0.303\textwidth}
                \includegraphics[width  = \linewidth]{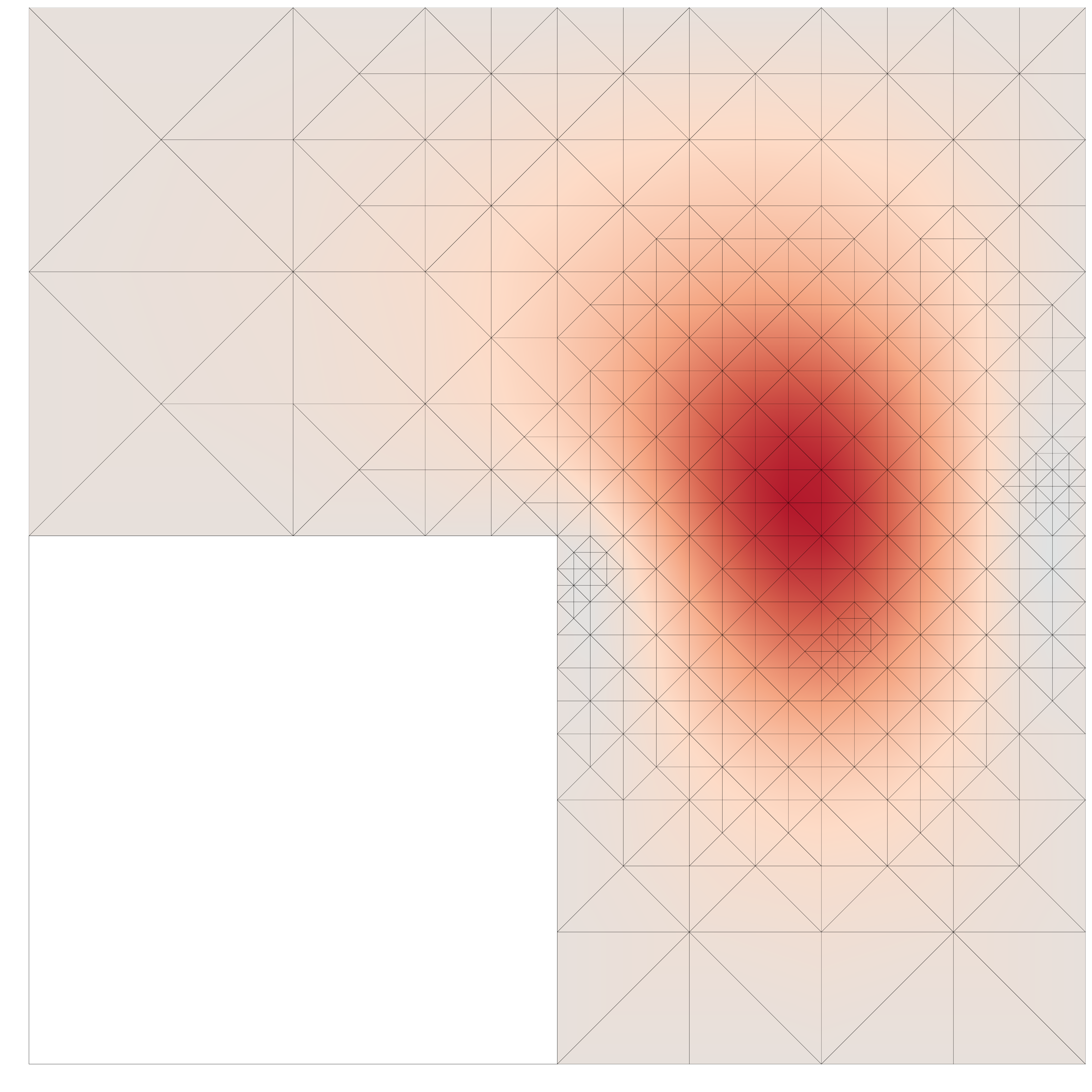}

                \includegraphics[width  = \linewidth]{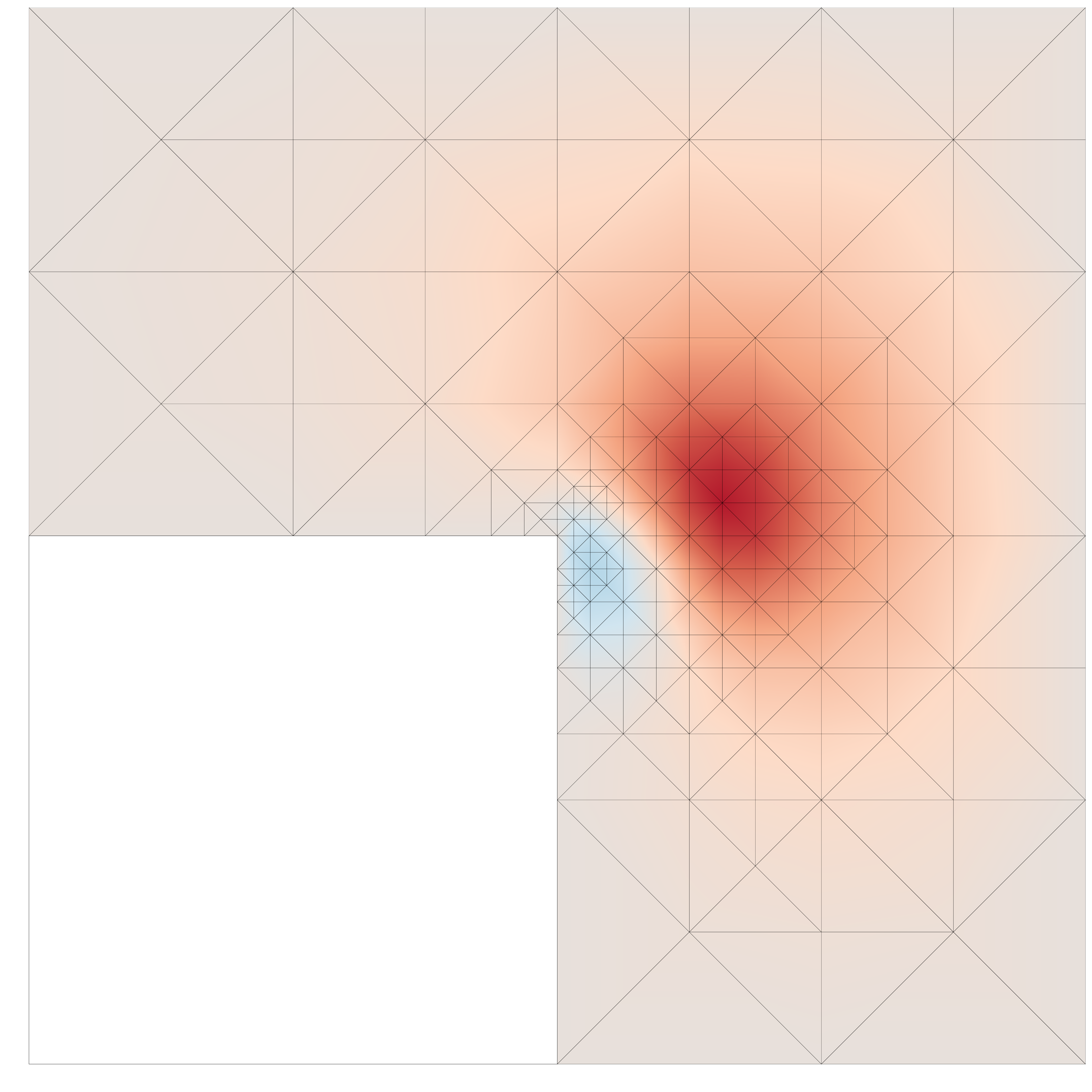}
    \end{minipage}
  \caption{Triangulation of approximations of different components $u_\nu$ in \eqref{eq:legendreexpansion}, analogous to Figure \ref{fig:unu}, generated by the adaptive method for different $\nu$ for $d=2$. Top: $\nu = 0$, bottom: $\nu = e_\mu$ for two different $\mu \in \mathcal{M}$.}
  \label{pic: Triangulation11}
\end{figure}

For $d=2$ we chose the L-shaped domain $D = (0,1)^2\setminus(0, 0.5)^2$, with an initial mesh of 24 congruent triangles with 5 interior nodes. For $d=1$ we simply take $D = (0,1)$. 
In \Cref{pic: Triangulation11} we show refinements of different components that appear during the adaptive method.

For the random fields $a(y)$, we use an expansion in terms of hierarchical hat functions formed by dilations and translations of $\theta(x)=(1-|2x-1|)_+$. Specifically, for $d=1$, $\theta_{\mu}$ with $\mu = (\ell, k)$ is given by 
\begin{equation}\label{eq:theta1d}
\theta_{\ell, k}(x) := c 2^{- \alpha \ell} \theta(2^\ell x - k), \quad k = 0,\ldots,2^\ell-1, \; \ell \in \N_0.
\end{equation}
This yields a wavelet-type multilevel structure satisfying Assumptions \ref{ass:wavelettheta} and thus \eqref{eq:multilevel1} and \eqref{eq:multilevel2}, where
\[
\cM = \{(\ell,k)\colon k = 0,\ldots,2^\ell-1, \; \ell \geq 0 \}
\]
with level parameters $\abs{(\ell,k)} = \ell$.
For $d=2$, we take the isotropic product hierarchical hat functions
\begin{equation}\label{eq:theta2d}
\theta_{\ell, k_1, k_2}(x) := c 2^{- \alpha \ell} \theta(2^\ell x - k_1)\,\theta(2^\ell x - k_2), \quad (\ell,k_1,k_2) \in \cM,
\end{equation}
with 
\begin{multline*}
  \cM = \big\{ \textstyle(\ell,k_1,k_2) \colon \;\ell \in \N_0,\; k_1, k_2 = 0,\frac{1}{2},\ldots,2^\ell-\frac{3}{2},2^\ell-1  \text{ with $ k_1\in \N_0$ or $k_2\in \N_0$},\\\text{and }\supp{\theta_{\ell, k_1, k_2}}\subset D \big\}.  
\end{multline*}

\begin{figure}
	\includegraphics[width=12.5cm]{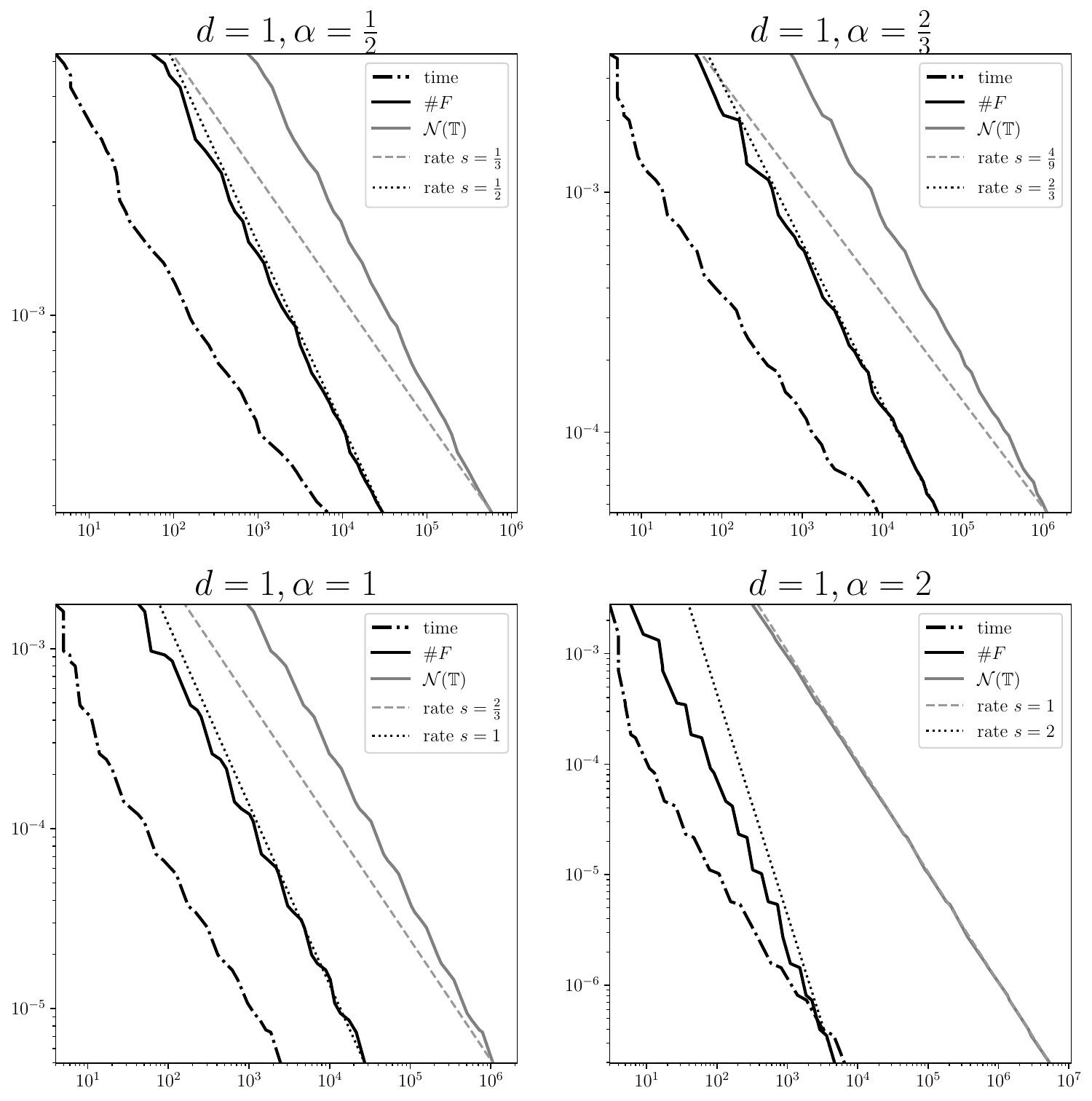}
	\caption{Computed residual bounds for $d=1$ as a function of total number of degrees of freedom $\dim \cV(\T) \eqsim N(\mathbb{T})$ of the current approximation of~$u$ (solid gray lines), degrees of stochastic freedom~$\#F$ (solid black lines), and elapsed computation time in seconds (dash-dotted line).}
	\label{plot:time1d}
\end{figure}
\begin{figure}
	\includegraphics[width=12.5cm]{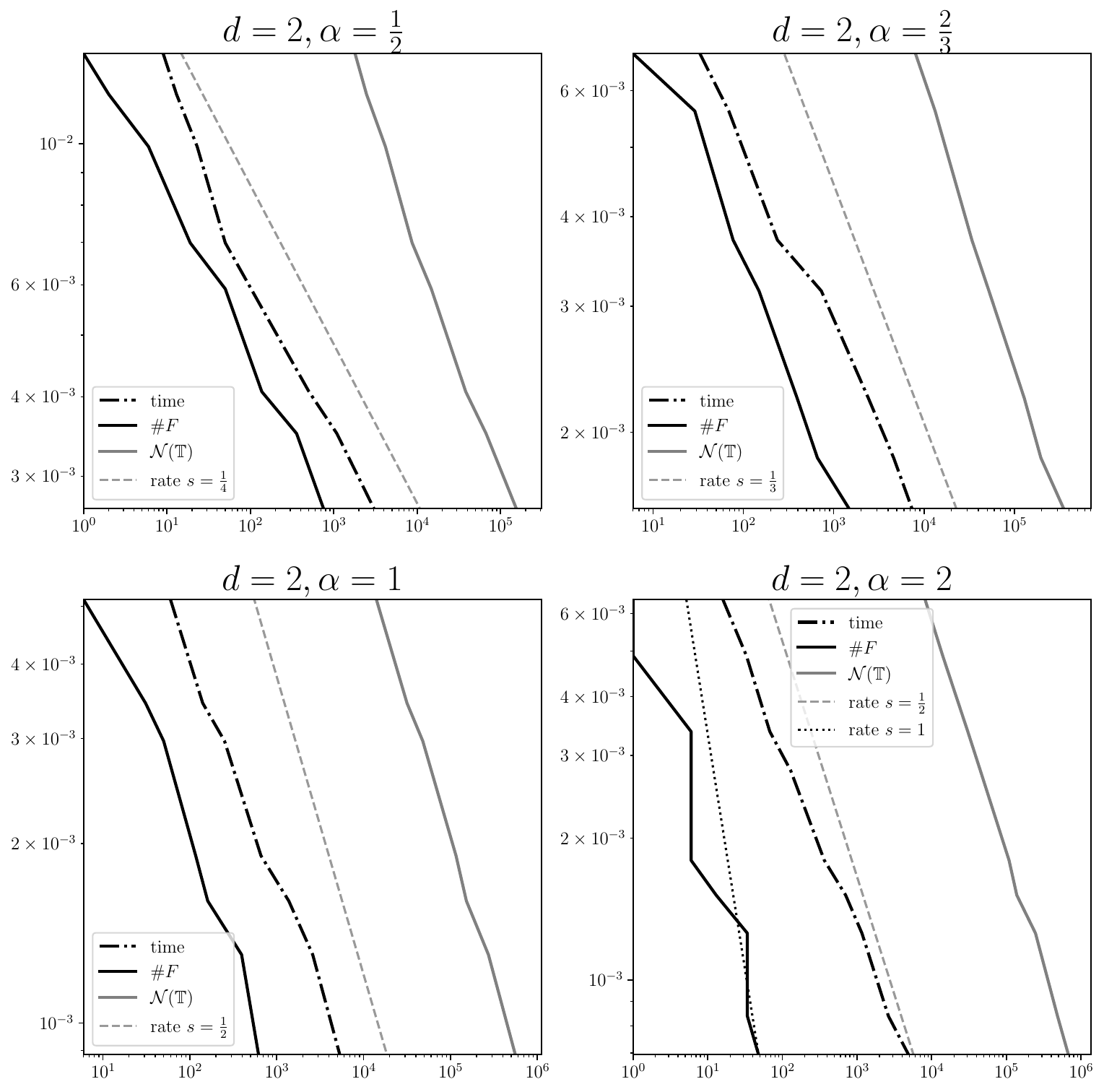}
	\caption{Computed residual bounds for $d=2$ as a function of total number of degrees of freedom $\dim \cV(\T) \eqsim N(\mathbb{T})$ of the current approximation of~$u$ (solid gray lines), number of product Legendre polynomials~$\#F$ (solid black lines), and elapsed computation time in seconds (dash-dotted line).}
	\label{plot:time2d}
\end{figure}

As in~\cite{BV}, we improved the quantitative performance by choosing parameters not strictly to theory. 
The adaptive scheme is tested with $\alpha = \frac12,\frac23,1,2$ for both $d=1$ and $d=2$. We take $f \equiv 1$ and $c= \frac{1}{10}$ in~\eqref{eq:theta1d},~\eqref{eq:theta2d}. The parameters of the scheme are chosen as $\omega_0 = \frac1{10}$, $C_B = \frac{1}{50}$, $c_\Psi = C_\Psi = c_P =C_P = 1$, and the relative accuracy $\rho = \frac{1}{50}$.  To choose an adequate parameter~$\hat J$, we tested different ranges of refinement, until we found no qualitative differences compared to higher values of $\hat J$. This resulted in~$\hat J=2$ for $d= 1$ and $\hat J =1$ for $d=2$.  
The results of the numerical tests are shown in Figure \ref{plot:time1d} for $d=1$ and in Figure \ref{plot:time2d} for $d=2$.
They are compared to the expected approximation rates~\eqref{eq:approxrate} seen as dashed lines. Remarkably, for $d=1$ the rate for degrees of freedom even resembles the expected limiting approximation rates for stochastic variables $\alpha$ instead of $\frac{2}{3}\alpha$. In the case $d=2$, the limiting approximation rates $\frac12 \min\{ \alpha, 1\}$ expected for piecewise linear spatial approximations are recovered by the adaptive method.

\section{Conclusions and Outlook}
\label{sec:conclusion}

We have constructed a novel adaptive stochastic Galerkin finite element method that guarantees a reduction of the error in energy norm in every step of the adaptive scheme while using an independently refined spatial mesh for each product orthonormal polynomial coefficient of the solution. All operations that need to be performed also have costs that are consistent with total computational costs scaling linearly (up to logarithmic factors) with respect to the total number of degrees of freedom $N(\mathbb{T})$. Such scaling of the costs is also visible in the numerical experiments. What is thus left for future work is to show that the number of new degrees of freedom that is added in each iteration of the adaptive scheme is quasi-optimal, which will then yield optimal computational complexity of the method.

As for the earlier wavelet-based method in \cite{BV}, our numerical tests confirm the approximability results \eqref{eq:approxrate} obtained in \cite{BCDS:17}. For $d=1$ with spatial approximation by piecewise linear finite elements, we observe a new effect going beyond the existing tests, where for the reachable accuracies we obtain a better convergence rate than ensured by the theory (and observed for higher-order wavelets in \cite{BV}). This may be related to the particular piecewise linear structures that appear in the exact solution for $d=1$ (see Figure \ref{fig:unu}), but this effect is yet to be understood in detail. 

Finally, let us note that although we have conducted the analysis of the method for the case $d=2$ for simplicity, everything that we have done can be generalized immediately to $d>2$, but the practical implementation in the case $d=3$ poses some further technical challenges.

\bibliographystyle{amsplain}
\bibliography{BEEVadaptsgfem}

\end{document}